\newtheorem{thm}{Theorem}[section]
\newtheorem{cor}[thm]{Corollary}
\newtheorem{lem}[thm]{Lemma}
\newtheorem{prop}[thm]{Proposition}
\theoremstyle{definition}
\newtheorem{dfn}[thm]{Definition}
\newtheorem{rem}[thm]{Remark}
\newtheorem{ques}[thm]{Question}
\newtheorem{ex}[thm]{Example}
\newtheorem*{claim*}{Claim}
\theoremstyle{remark}
\numberwithin{equation}{thm}
\def\Hom{\operatorname{Hom}}
\def\tr{\operatorname{tr}}
\def\ann{\operatorname{Ann}}
\def\Tor{\operatorname{Tor}}
\def\Ext{\operatorname{Ext}}
\def\syz{\Omega}
\def\cm{\mathrm{CM}}
\def\mod{\operatorname{mod}}
\def\im{\operatorname{Im}}
\def\ker{\operatorname{Ker}}
\def\cok{\operatorname{Coker}}
\def\id{\mathrm{id}}
\def\codim{\operatorname{codim}}
\def\pd{\operatorname{pd}}
\def\edim{\operatorname{edim}}
\def\ng{\operatorname{NG}}
\def\p{\mathfrak{p}}
\def\m{\mathfrak{m}}
\def\h{\operatorname{H}}
\def\V{\operatorname{V}}
\def\E{\operatorname{E}}
\def\n{\mathfrak{n}}
\def\I{\operatorname{I}}
\def\depth{\operatorname{depth}}
\def\t{\mathrm{t}}
\def\soc{\operatorname{Soc}}
\def\xx{\boldsymbol{x}}
\def\res{\operatorname{res}}
\def\X{\mathcal{X}}
\def\speco{\operatorname{Spec^0}}
\def\sing{\operatorname{Sing}}
\def\D{\mathrm{D}}
\def\sg{\mathrm{sg}}
\def\b{\mathrm{b}}
\def\nf{\operatorname{NF}}
\def\rest{\operatorname{rest}}
\def\rank{\operatorname{rank}}
\def\thick{\operatorname{thick}}
\def\ipd{\operatorname{IPD}}
\def\spec{\operatorname{Spec}}
\def\P{\mathbb{P}}
\def\codepth{\operatorname{codepth}}
\def\C{\mathcal{C}}
\def\k{\mathrm{K}}
\begin{document}
\title{Burch ideals and Burch rings}
\author{Hailong Dao}
\address[Hailong Dao]{Department of Mathematics, University of Kansas, Lawrence, KS 66045-7523, USA}
\email{hdao@ku.edu}
\urladdr{https://www.math.ku.edu/~hdao/}
\author{Toshinori Kobayashi}
\address[Toshinori Kobayashi]{Graduate School of Mathematics, Nagoya University, Furocho, Chikusaku, Nagoya 464-8602, Japan}
\email{m16021z@math.nagoya-u.ac.jp}
\author{Ryo Takahashi}
\address[Ryo Takahashi]{Graduate School of Mathematics, Nagoya University, Furocho, Chikusaku, Nagoya 464-8602, Japan\,/\,Department of Mathematics, University of Kansas, Lawrence, KS 66045-7523, USA}
\email{takahashi@math.nagoya-u.ac.jp}
\urladdr{http://www.math.nagoya-u.ac.jp/~takahashi/}
\subjclass[2010]{13C13, 13D09, 13H10}
\keywords{Burch ideal, Burch ring, direct summand, fibre product, Gorenstein ring, hypersurface, singular locus, singularity category, syzygy, thick subcategory, (weakly) $\m$-full ideal}
\dedicatory{Dedicated to Lindsay Burch}
\begin{abstract}
We introduce the notion of Burch ideals and Burch rings.
They are easy to define, and can be viewed as generalization of many well-known concepts, for example integrally closed ideals of finite colength and Cohen--Macaulay rings of minimal multiplicity.
We give several characterizations of these objects.
We show that they satisfy many interesting and desirable properties: ideal-theoretic, homological, categorical.
We relate them to other classes of ideals and rings in the literature. 
\end{abstract}
\maketitle
\section{Introduction}

This article introduces and studies a class of ideals and their affiliated rings which we call Burch ideals and Burch rings.
While their definitions are quite simple, our investigation shows that they enjoy remarkable ideal-theoretic and homological properties.
These properties allow us to link them to many classes of ideals and rings in the literature, and consequently strengthen numerous old results as well as establish new ones.

Let us make a brief remark on our motivation and historical context.
The project originated from our effort to understand a beautiful result by Burch on homological properties of ideals below (\cite[Theorem 5(ii) and Corollary 1(ii)]{B}).

\begin{thm}[Burch]\label{re1}
Let $(R,\m)$ be a local ring.
Let $I$ be an ideal of $R$ with $\m I\not=\m(I:\m)$.
\begin{enumerate}[\rm(1)]
\item
Let $M$ be a finitely generated $R$-module.
If $\Tor_n^R(R/I,M)=\Tor_{n+1}^R(R/I,M)=0$ for some positive integer $n$, then $M$ has projective dimension at most $n$.
\item
If $I$ has finite projective dimension, then $R$ is regular.
\end{enumerate}
\end{thm}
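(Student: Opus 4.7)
Part (2) follows quickly from (1) by taking $M=R/\m$: the hypothesis $\pd_R I<\infty$ gives $\pd_R(R/I)<\infty$, so $\Tor_n^R(R/I,R/\m)=\Tor_{n+1}^R(R/I,R/\m)=0$ for all $n\gg 0$, and (1) then yields $\pd_R(R/\m)<\infty$, forcing $R$ regular by Auslander--Buchsbaum--Serre. So the substance of the theorem lies in part (1).

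\textbf{Reduction to $n=1$.} Set $N=\syz^{n-1}_R M$ in a minimal free resolution of $M$. Standard dimension shifting gives $\Tor_i^R(R/I,N)\cong\Tor_{i+n-1}^R(R/I,M)$ for $i\geq 1$, and $\pd_R M\leq n$ is equivalent to $\pd_R N\leq 1$, i.e., $\Tor_2^R(R/\m,N)=0$. So the task reduces to: if $\m I\neq\m(I:\m)$ and $\Tor_1^R(R/I,N)=\Tor_2^R(R/I,N)=0$, then $\Tor_2^R(R/\m,N)=0$.

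\textbf{Applying the Burch condition.} From $\m I\neq\m(I:\m)$, choose $y\in(I:\m)$ and $a\in\m$ with $ay\in I\setminus\m I$; this forces $y\notin I$. Multiplication by $y$ is then a non-zero $R$-linear map $R/\m\to R/I$, fitting into a short exact sequence
$$0\to R/\m\to R/I\to R/(I+Ry)\to 0.$$
Writing $I'=I+Ry$ and applying $-\otimes_R N$, the two assumed vanishings collapse the long exact Tor sequence to an isomorphism $\Tor_2^R(R/I',N)\cong\Tor_1^R(R/\m,N)$ and a surjection $\Tor_3^R(R/I',N)\twoheadrightarrow\Tor_2^R(R/\m,N)$.

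\textbf{Main obstacle.} The hard step is to force the target $\Tor_2^R(R/\m,N)$ of this surjection to vanish; the bare long exact sequence is insufficient, and the full strength of $\m I\neq\m(I:\m)$ (rather than merely $I\neq(I:\m)$) must enter. The extra content is that $ay\notin\m I$, i.e., $ay$ is a \emph{minimal} generator of $I$, so replacing $ay$ by $y$ converts a minimal generating set of $I$ into one of $I'$ of the same cardinality. This yields an explicit comparison between the minimal free resolutions of $R/I$ and $R/I'$, and I would feed this comparison into a mapping-cone argument to promote the above surjection to the required vanishing. Carrying out this resolution-level comparison cleanly---which is essentially Burch's original device---is the key technical hurdle.
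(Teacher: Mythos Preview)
This theorem is not proved in the paper: it is Burch's original result, quoted from \cite{B} as motivation. The only argument the paper supplies is the remark following Lemma~\ref{31}, which treats the special case $\depth R>1$ and $n>1$. There the mechanism is quite different from yours: by Lemma~\ref{31}, the Burch hypothesis is equivalent to $k=R/\m$ being a \emph{direct summand} of $I/aI$ for some non-zerodivisor $a\in\m$. Two consecutive vanishings of $\Tor^R_\bullet(R/I,M)$ then give, via the exact sequence $0\to I\xrightarrow{a}I\to I/aI\to0$, that $\Tor_n^R(I/aI,M)=0$; since $k$ is a summand, $\Tor_n^R(k,M)=0$ and $\pd_R M<n$. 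Producing $k$ as a summand rather than merely as a subobject is precisely what makes the Tor vanishing immediate.

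Your proposal is, by your own admission, incomplete at the decisive point, and the sketch you give for closing the gap has a concrete flaw. You assert that replacing the minimal generator $ay$ of $I$ by $y$ yields a minimal generating set of $I'=I+(y)$ of the same cardinality, but this fails in general: for $R=k[\![s,t]\!]$, $I=\m^2$, $y=a=s$ one has $ay=s^2\in I\setminus\m I$, yet $\mu(I)=3$ while $\mu(I')=\mu((s,t^2))=2$. Indeed $\m I'=\m I+\m y$, and the Burch condition forces $\m y\not\subseteq\m I$, so $\mu(I')\le\mu(I)$ with strict inequality possible. Hence the ``explicit comparison of minimal resolutions'' you plan to exploit does not exist in the form described, and a bare mapping-cone gives no control over $\Tor_3^R(R/I',N)$. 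Note too that the short exact sequence $0\to k\to R/I\to R/I'\to0$ by itself uses only $(I:\m)\ne I$, not the full Burch condition; to inject $ay\notin\m I$ effectively you need a device closer to the direct-summand argument above (or to Corollary~\ref{10} and Proposition~\ref{203} in the paper), not a cardinality count on generators.
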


Lindsay Burch\footnote{We are grateful to Rodney Sharp and Edmund Robertson for providing us with the following brief biography of Burch: Lindsay Burch was born in 1939. She did her first degree at Girton College, Cambridge from 1958 to 1961. She then went to Exeter University to study for a Ph.D. advised by David Rees.  She was appointed to Queen's College, Dundee in 1964 before the award of her Ph.D. which wasn't until 1967 for her thesis ``Homological algebra in local rings". At the time she was appointed to Queen's College it was a college of the University of St Andrews but later, in 1967, it became a separate university, the University of Dundee. Burch continued to work in the Mathematics Department of the University of Dundee until at least 1978.  She then took up computing and moved to a computing position at Keele University near Stafford in the north of England. She remained there until she retired and she still lives near Keele University.} was a PhD student of David Rees, and she wrote several (short) papers that have had a sizable impact on two active corners of commutative algebra: homological theory and integral closure of ideals. Perhaps most researchers in the field know of her work via the frequently used Hilbert--Burch Theorem (\cite{B}), her construction of ideals with only three-generators while possessing arbitrarily complicated homological behavior (\cite{B1}), and the Burch inequality on analytic spreads (\cite{B2}). The ideas of Burch's particular result above, while less well-known, have resurfaced in the work of several authors which also motivated our work, see \cite{CGHPU, CHKV, KS, KV, SV}. However, it has appeared to us that what was known previously is just the tip of an iceberg, and led us to formally make the following definitions. 

Let $(R,\m)$ be a local ring.
We define an ideal $I$ of $R$ to be a {\em Burch ideal} if $\m I\not=\m(I:\m)$.
We also define {\em Burch rings of depth zero} to be those local rings whose completions are quotients of regular local rings by Burch ideals.
Then we further define {\em Burch rings} of positive depth as local rings which ``deform'' to Burch rings of depth zero; see Section \ref{basicSec} for the precise definitions.

It is not hard to see that the class of Burch ideals contains other well-studied classes: integrally closed ideals of codepth zero (under mild conditions), $\m$-full ideals, weakly $\m$-full ideals, etc.  

One of our main results characterizes Burch ideals and Burch rings of depth zero:

\begin{thm}[Theorem \ref{210}]\label{t11}
Let $(R,\m,k)$ be a local ring and $I\not=\m$ an ideal of $R$.
Then $I$ is Burch if and only if the second syzygy $\syz^2_{R/I} k$ of $k$ over $R/I$ contains $k$ as a direct summand.
\end{thm}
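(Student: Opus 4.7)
The plan is to characterize when $k$ is a direct summand of $\syz^2_S k$, where $S=R/I$ and $\n=\m/I$, in terms of an explicit element of $\syz^2_S k \subseteq S^e$ (with $e=\edim S$). Identifying $\syz^2_S k$ as the kernel of the minimal free cover $S^e\twoheadrightarrow\n$ sending $e_i\mapsto\bar y_i$ (where $y_1,\dots,y_e\in\m$ lift minimal generators of $\n$), one has $\syz^2_S k\subseteq\n S^e$. From this I would record the reformulation: $k$ is a direct summand of $\syz^2_S k$ iff there exists $m=(s_1,\dots,s_e)\in\syz^2_S k$ with $\n m=0$ (equivalently, each $s_i\in\soc(S)$), $m\neq 0$, and $m\notin\n\,\syz^2_S k$.

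For $(\Rightarrow)$, assuming $\m(I\!:\!\m)\not\subseteq\m I$, I would first introduce the $k$-bilinear pairing $\phi\colon\m/\m^2\otimes_k(I\!:\!\m)/I\to I/\m I$, $\bar a\otimes\bar x\mapsto\overline{ax}$, checking it is well-defined (if $a\in\m^2$ or $x\in I$ then $ax\in\m I$, using $\m(I\!:\!\m)\subseteq I$). Because any $a\in\m^2+I$ forces $\overline{ax}=0$ in $I/\m I$, the pairing descends to $\bar\phi\colon\n/\n^2\otimes_k\soc(S)\to I/\m I$, and Burchness says $\bar\phi\neq 0$. So I can select $x\in I\!:\!\m$ and $a\in\m$ with $ax\in I\setminus\m I$ \emph{and} $\bar a\notin\n^2$; I take $y_1=a$ and extend to lifts $y_1,\dots,y_e$ of minimal generators of $\n$. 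Set $m=(\bar x,0,\dots,0)\in S^e$. Since $\bar a\bar x=\overline{ax}=0$, we have $m\in\syz^2_S k$, and clearly $\n m=0$, $m\neq 0$.

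The crux is showing $m\notin\n\syz^2_S k$. I would argue by contradiction: suppose $m=\sum_k \bar b_k r_k$ with $\bar b_k\in\n$, $r_k\in\syz^2_S k$. Choose lifts $\tilde b_k\in\m$ and, using $\syz^2_S k\subseteq\n S^e$, lifts $\tilde r_k^j\in\m$ of the coordinates of $r_k$, so that the relation $r_k\in\syz^2_S k$ reads $\sum_{j=1}^e y_j\tilde r_k^j\in I$. Comparing coordinates of $m$ gives $x-\sum_k\tilde b_k\tilde r_k^1\in I$ and $\sum_k\tilde b_k\tilde r_k^j\in I$ for $j\geq 2$. Multiplying the first by $a=y_1$ and using $y_1\tilde r_k^1\equiv -\sum_{j\geq 2}y_j\tilde r_k^j\pmod I$ one gets
\[
ax\equiv\sum_k a\tilde b_k\tilde r_k^1\equiv-\sum_{j\geq 2}y_j\Bigl(\sum_k\tilde b_k\tilde r_k^j\Bigr)\pmod{\m I},
\]
and the inner sum lies in $I$, so the right-hand side is in $\m I$, forcing $ax\in\m I$, a contradiction.

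For $(\Leftarrow)$, given $m=(s_1,\dots,s_e)$ satisfying the three conditions, observe that each $m_i=(0,\dots,s_i,\dots,0)$ is already in $\syz^2_S k$ because $s_i\in\soc(S)$ kills $\bar y_i$. Since $\syz^2_S k/\n\syz^2_S k$ is a $k$-vector space and $m=\sum m_i$, some $m_i$ is not in $\n\syz^2_S k$; after reindexing I reduce to $m=(\bar x,0,\dots,0)$ with $x\in I\!:\!\m$ and $a:=y_1$ satisfying $\bar a\notin\n^2$. Then $ax\in I$ is automatic, and the goal is $ax\notin\m I$. This follows by running the computation of $(\Rightarrow)$ in reverse: if $ax\in\m I$, one writes $ax=\sum_j b_j g_j$ and expands each $g_j\in I$ via the chosen generators $y_i$, producing an explicit expression exhibiting $m$ as an element of $\n\syz^2_S k$, against hypothesis. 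Hence $ax\in I\setminus\m I$, so $I$ is Burch.

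The principal obstacle is the lifting/unlifting bookkeeping in the two parallel $R$-level calculations, and especially the symmetry that makes $(\Leftarrow)$ into a genuine converse of $(\Rightarrow)$; care is needed because $\syz^2_S k\subseteq\n S^e$ but in general $\n\syz^2_S k\subsetneq\syz^2_S k\cap\n^2 S^e$, so one cannot read off $m\notin\n\syz^2_S k$ by merely checking $\bar x\notin\n^2$. A secondary issue is the case $I\not\subseteq\m^2$: lifts $y_1,\dots,y_e$ of a minimal generating set of $\n$ need not generate $\m$, so the expansions $g_j=\sum c_{ji}y_i$ used in $(\Leftarrow)$ must be adjusted by factoring out the linear elements of $I$ and reducing to an ambient presentation with $I\subseteq\m^2$; this reduction preserves both the Burch condition (at the level of the factored ring) and $\syz^2_S k$.
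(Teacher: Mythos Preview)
Your forward implication is correct and is essentially the argument the paper gives (via Proposition~\ref{203} and Corollary~\ref{10}): your pairing $\bar\phi$ isolates $a\in\m\setminus(\m^2+I)$ and $x\in(I:\m)$ with $ax\notin\m I$, and your contradiction showing $m=(\bar x,0,\dots,0)\notin\n\,\syz^2_Sk$ is exactly the computation underlying Proposition~\ref{p1} specialized to $J=\n$, just unpacked.

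The backward implication has a genuine gap, and in fact the assertion as literally written (for an arbitrary local ring $R$) is false. Take $R=k[t]/(t^2)$ and $I=0$: then $\m(I:\m)=\m\cdot\soc R=0=\m I$, so $I$ is not Burch, yet $\syz^2_{R/I}k=\syz^2_Rk\cong k$. In your argument this is precisely where ``running $(\Rightarrow)$ in reverse'' breaks: with $y_1=t=x$ one has $ax=0\in\m I$, but the promised expression of $m=(\bar t)$ as an element of $\n\,\syz^2_Sk=0$ cannot exist. More generally, from $ax\in\m I$ you obtain only a relation $y_1x=\sum b_jg_j$ in $R$; rewriting it yields a syzygy of $y_1,\dots,y_e$ \emph{in $R$} whose image in $S^e$ is $m$ itself, not an $\n$-combination of elements of $\syz^2_Sk$. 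The paper's Theorem~\ref{210} is really the statement that a local ring is a \emph{Burch ring} of depth zero iff $k$ is a summand of its second syzygy; this agrees with the introductory formulation only after passing to a Cohen presentation over a \emph{regular} ring. For the converse the paper does exactly that: writing the completion as $S/I$ with $(S,\n)$ regular, it uses a snake-lemma comparison of $\syz^2_Sk$ and $\syz^2_{S/I}k$ and then invokes the nontrivial fact (from \cite{HSV}) that $\syz^2_Sk\cap\n^2S^e\subseteq\n\,\syz^2_Sk$ over a regular local ring. That inclusion is the missing ingredient in your $(\Leftarrow)$, and it is simply unavailable when the ambient $R$ is not regular.
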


From this, we can quickly deduce a characterization of Gorenstein Burch ideals, which extends results on integrally closed or $\m$-full ideals in \cite{G, GH}. In fact, our proofs allow us to completely characterized modules over Burch rings
of depth zero whose some higher syzygies contain the residue field as a direct summand, as follows:

\begin{thm}[Theorem \ref{11}]\label{100}
Let $(R, \m,k)$ be a Burch ring of depth zero.
Let $M$ be a finitely generated $R$-module.
The following are equivalent:
\begin{enumerate}[\rm(1)]
\item
The ideal $\I(M)$ generated by all entries of the matrices $\partial_i$, $i>0$ in a minimal free resolution $(F,\partial)$ of $M$ is equal to $\m$. 
\item
The $R$-module $k$ is a direct summand of $\syz_R^rM$ for some $r\ge2$.
\end{enumerate}
\end{thm}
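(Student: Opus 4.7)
The plan is to handle the two implications separately; $(2) \Rightarrow (1)$ is a routine consequence of the behaviour of minimal free resolutions under direct summands, while $(1) \Rightarrow (2)$ is the substantial direction and will rely essentially on the Burch hypothesis through Theorem~\ref{210}.

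For $(2) \Rightarrow (1)$, assume $k$ is a direct summand of $\syz_R^r M$ for some $r \geq 2$. By uniqueness of minimal free resolutions (Krull--Schmidt), the tail of $(F, \partial)$ starting at homological degree $r$ splits as the direct sum of the minimal free resolution of $k$ and that of the complementary summand of $\syz_R^r M$. Since the first differential in the minimal free resolution of $k$ is the map $R^{\edim R} \to R$ whose matrix entries form a minimal generating set of $\m$, this block appears as a direct summand block of $\partial_{r+1}$. Hence $\m \subseteq \I(M)$, while the reverse inclusion is automatic from minimality of $(F,\partial)$; thus $\I(M) = \m$.

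For $(1) \Rightarrow (2)$, I would first reduce to the complete case. The faithfully flat extension $R \to \widehat R$ commutes with minimal free resolutions, so preserves $\I(M)$, and by Krull--Schmidt over complete local rings, commutes with direct summand decompositions; both the hypothesis and the conclusion therefore descend from $\widehat R$ to $R$. I may thus assume $R$ is complete, so that $R = S/J$ with $(S,\n)$ regular local and $J$ a Burch ideal in $S$. Applying Theorem~\ref{210} to $(S, J)$ yields that $k$ is a direct summand of $\syz_R^2 k$; I fix such a splitting, recording in particular a socle element $\bar z \in \syz_R^2 k$ that lies outside $\m \cdot \syz_R^2 k$. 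The existence of $\bar z$ encodes the non-triviality $\n(J :_S \n) \not\subseteq \n J$ coming from $J$ being Burch.

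The core step is to promote the hypothesis $\I(M) = \m$ to an analogous socle witness inside some $\syz_R^r M$; by the standard characterization that $k$ is a direct summand of $N$ if and only if $\soc(N) \not\subseteq \m N$, any such witness yields the desired summand. Since $\I(M) = \m$, the entries of the $\partial_i$'s span $\m$ modulo $\m^2$, so after possibly changing the bases of the $F_i$'s I may select an index $i \geq 1$ and an entry of $\partial_i$ equal to a chosen minimal generator $x$ of $\m$ up to $\m^2$. This picks out a generator of $\syz_R^i M$ whose expansion in $F_{i-1}$ has $x$ as coefficient on some basis vector. The plan is then to run the Burch-witness construction from the proof of Theorem~\ref{210} with this column of $\partial_i$ playing the role of the corresponding column in the resolution of $k$, producing an element $\xi \in \syz_R^{i+1} M$ with $\m \xi = 0$. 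The main obstacle, and the step I expect to require the most care, is verifying that $\xi \notin \m \cdot \syz_R^{i+1} M$: this non-radicality is precisely where the non-triviality $\n(J :_S \n) \not\subseteq \n J$ of the Burch hypothesis must be invoked, and I expect the verification to parallel the argument for Theorem~\ref{210} almost line by line, with the resolution of $k$ replaced by the column of $(F,\partial)$ where the generator $x$ appears.
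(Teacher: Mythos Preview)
Your $(2)\Rightarrow(1)$ argument is fine and matches the paper's. The reduction to the complete case in $(1)\Rightarrow(2)$ is also correct.

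The substantive gap is in the selection of the index $i$ and the generator $x$. You propose to pick \emph{some} minimal generator $x$ of $\m$, locate a column of $\partial_i$ having $x$ as an entry, and then transplant the witness from Theorem~\ref{210}. But the construction underlying Theorem~\ref{210} (via Proposition~\ref{203} and Corollary~\ref{10}) does not work column-by-column for an arbitrary choice: what is needed is a differential $\partial_t$ whose lifted matrix $d_t$ over $S$ satisfies $(I:_S\n)\not\subseteq(\n I:_S\I_1(d_t))$. A column containing a single prescribed minimal generator need not satisfy this, and the socle element $\bar z\in\syz_R^2 k$ you record is never actually connected to your choice of $x$.

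The paper supplies exactly the missing mechanism: writing $\n=\sum_{i>0}\I_1(d_i)+I$ (from $\I(M)=\m$) gives
\[
(\n I:\n)=\bigcap_{i}(\n I:\I_1(d_i))\cap(\n I:I),
\]
and since $(I:\n)\subseteq\n\subseteq(\n I:I)$ while the Burch condition says $(I:\n)\not\subseteq(\n I:\n)$, there must exist $t$ with $(I:\n)\not\subseteq(\n I:\I_1(d_t))$. Corollary~\ref{10} then applies directly to the presentation $\partial_t$ of $\syz_R^{t-1}M$ and yields $k$ as a summand of $\syz_R^{t+1}M$. So the detour through Theorem~\ref{210} for $k$ is unnecessary; what you actually need is this colon-ideal decomposition to pinpoint the correct $t$, after which Corollary~\ref{10} does all the work.
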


Our work reveals some interesting connections between Burch ideals/rings and concepts studied by other authors in quite different contexts.
For instance, we show that in codimension two, artinian almost Gorenstein rings as 
introduced by Huneke--Vraciu \cite{HV} (also studied in \cite{SV}) are Burch; see Proposition \ref{c68}.
Over a regular local ring, the ``Burchness'' of an ideal $I$ imposes a strong condition on the matrix at the end of a minimal free resolution of $I$, a condition that also appeared  in the work of Corso--Goto--Huneke--Polini--Ulrich  \cite{CGHPU} on iterated socles.
That connection led us to obtain a refinement of their result in Theorem \ref{t63}.  

We also study Burch rings of higher depth, especially their homological and categorical aspects.
We completely classify Burch rings which are fibre products in Proposition \ref{fp}.
The Cohen--Macaulay rings of minimal multiplicity are Burch.
Non-Gorenstein Burch rings turn out to be {\em G-regular} in Theorem \ref{33}, in the sense that all the totally reflexive modules are free.
Moreover, we show an explicit result on vanishing behavior of $\Tor$ for any pair of modules. 

\begin{thm}[Corollary \ref{1}]
Let $R$ be a Burch ring of depth $t$.
Let $M,N$ be finitely generated $R$-modules.
Assume that there exists an integer $l\ge\max\{3,t+1\}$ such that $\Tor_i^R(M,N)=0$ for all $l+t\le i\le l+2t+1$.
Then either $M$ or $N$ has finite projective dimension.
\end{thm}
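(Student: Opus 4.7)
The plan is to reduce to the depth-zero case by syzygy shifts and a Koszul reduction, and then to apply Theorem \ref{100} together with Burch's original Theorem \ref{re1}(1). Argue by contradiction: assume $\pd_R M=\infty$ and $\pd_R N=\infty$.

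By the definition of a Burch ring of depth $t$, choose an $R$-regular sequence $\xx=x_1,\dots,x_t$ such that $\bar R:=R/(\xx)$ is a Burch ring of depth zero. Set $M_0:=\syz_R^t M$ and $N_0:=\syz_R^t N$; both have depth $\ge t$ by the depth lemma. After a prime-avoidance adjustment of $\xx$ within its coset (preserving the depth-zero Burch property of the quotient), assume $\xx$ is regular on $M_0$ and on $N_0$. Put $\bar M:=M_0/\xx M_0$ and $\bar N:=N_0/\xx N_0$; these are $\bar R$-modules of infinite projective dimension. The syzygy shift applied to both arguments gives
\[
\Tor^R_i(M_0,N_0)=\Tor^R_{i+2t}(M,N)=0 \quad \text{for } i\in[l-t,l+1],
\]
a run of $t+2$ consecutive vanishings, with $l-t\ge 1$ by $l\ge t+1$. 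Iterating the long exact sequences of $\Tor^R(M_0,-)$ attached to the short exact sequences $0\to L\xrightarrow{x_j}L\to L/x_jL\to 0$ with $L=N_0/(x_1,\dots,x_{j-1})N_0$ (exact because $\xx$ is $N_0$-regular), each step erases one vanishing from the low end of the run, leaving $\Tor^R_i(M_0,\bar N)=0$ for $i\in[l,l+1]$. The base-change isomorphism, valid because $\xx$ is $M_0$-regular, then gives $\Tor^{\bar R}_i(\bar M,\bar N)=\Tor^R_i(M_0,\bar N)=0$ for $i\in[l,l+1]$.

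Now apply Theorem \ref{100} to $\bar M$ over the depth-zero Burch ring $\bar R$: the residue field $k$ is a direct summand of $\syz_{\bar R}^2\bar M$ (inspection of the proof of Theorem \ref{100} shows that $r=2$ always works). The syzygy shift over $\bar R$ then makes $\Tor^{\bar R}_{i-2}(k,\bar N)$ a direct summand of $\Tor^{\bar R}_i(\bar M,\bar N)=0$ whenever $i-2\ge 1$; since $l\ge 3$ both $i=l$ and $i=l+1$ satisfy this, so
\[
\Tor^{\bar R}_{l-2}(k,\bar N)=\Tor^{\bar R}_{l-1}(k,\bar N)=0.
\]
Since $\bar\m$ is a Burch ideal of $\bar R$ (as $\bar R$ is not a field), Burch's Theorem \ref{re1}(1) applied with $I=\bar\m$ forces $\pd_{\bar R}\bar N\le l-2<\infty$, contradicting $\pd_{\bar R}\bar N=\infty$.

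The two delicate points will be (a) the prime-avoidance argument allowing $\xx$ to be simultaneously regular on $R$, $M_0$, and $N_0$ while still yielding a depth-zero Burch quotient, and (b) the verification of the hypothesis $\I(\bar M)=\bar\m$ needed to apply Theorem \ref{100} in the form $r=2$, which depends on the specific structure of $\bar M=M_0/\xx M_0$. Once these are in place, the Tor bookkeeping is clean: the calibration $l\ge\max\{3,t+1\}$ is precisely what is needed so that the two consecutive $\Tor^{\bar R}(\bar M,\bar N)$ vanishings land at indices $\ge 3$, which after the Theorem \ref{100} syzygy shift become two vanishings of $\Tor^{\bar R}(k,\bar N)$ at indices $\ge 1$, as Burch's Theorem \ref{re1}(1) requires.
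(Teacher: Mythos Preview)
Your overall architecture --- pass to completion, shift by $t$-th syzygies on both sides, reduce modulo the fixed regular sequence $\xx$, then work over the depth-zero Burch ring $\bar R$ --- matches the paper's route (which invokes Proposition~\ref{l59} and the argument of \cite[Corollary 6.5]{NT}). But the proposal has a genuine gap at the depth-zero step, and your own ``delicate points'' flag it.

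\textbf{The gap.} Your assertion that ``inspection of the proof of Theorem~\ref{100} shows that $r=2$ always works'' is false. The proof of Theorem~\ref{11} (which is Theorem~\ref{100}) only produces some index $1\le t\le n$ with $(I:\n)\nsubseteq(\n I:\I_1(d_t))$, and then $k$ becomes a summand of $\syz^{t+1}_{\bar R}\bar M$; there is no control forcing $t=1$. What \emph{does} give $r=2$ is Proposition~\ref{203}, but its hypothesis is $\I_1(\bar M)=\bar\m$ (entries of the \emph{first} differential), not merely $\I(\bar M)=\bar\m$. For $\bar M=(\syz^t_R M)/\xx(\syz^t_R M)$, the minimal $\bar R$-presentation matrix is just $\partial_{t+1}^M\otimes\bar R$, and there is no reason its entries should generate $\bar\m$. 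So neither the hypothesis of Theorem~\ref{100} nor the stronger hypothesis of Proposition~\ref{203} is available, and with only two consecutive $\Tor^{\bar R}(\bar M,\bar N)$ vanishings you cannot absorb an uncontrolled $r$. The paper closes this gap with Lemma~\ref{301}: from any nonfree $\bar M$ one builds an auxiliary module $X$ sitting in $0\to(\syz\bar M)^n\to X\to\bar M^n\to0$ with $\I_1(X)=\bar\m$; then Proposition~\ref{203} gives $k\mid\syz^2_{\bar R}X$, and two consecutive vanishings of $\Tor^{\bar R}(\bar M,\bar N)$ transfer through the short exact sequence to a single vanishing of $\Tor^{\bar R}(k,\bar N)$. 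This is precisely the content of Proposition~\ref{l59}.

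\textbf{On your point (a).} The prime-avoidance adjustment is both unnecessary and unavailable. It is unnecessary because any $R$-regular sequence $\xx$ of length $t=\depth R$ is automatically regular on a $t$-th syzygy: $\Tor_i^R(\syz^t_R M,R/(\xx))=\Tor_{i+t}^R(M,R/(\xx))=0$ for $i>0$ since $\pd_R R/(\xx)=t$. It is unavailable because the Burch property of $R/(\xx)$ is \emph{not} stable under perturbing $\xx$; see Example~\ref{e44}, where $R/(x)$ is Burch but $R/(y)$ is not. (Also, do not forget to pass to $\widehat R$ first, since the defining sequence $\xx$ lives there.) Once you replace your direct appeal to Theorem~\ref{100} by the construction of Lemma~\ref{301}, the rest of your Tor bookkeeping goes through and coincides with the paper's proof.
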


To state our last main result in this introduction, recall that the {\em singularity category} $\D_\sg(R)$ is by definition the triangulated category given as the Verdier quotient of the bounded derived category of finitely generated $R$-modules by perfect complexes.
Under some assumptions, one can classify all the thick subcategories of $\D_\sg(R)$ for a Burch ring $R$.

\begin{thm}[Theorem \ref{th54}]\label{101}
Let $R$ be a singular Cohen--Macaulay Burch ring.
Suppose that on the punctured spectrum $R$ is either locally a hypersurface or locally has minimal multiplicity.
Then there is a one-to-one correspondence between the thick subcategories of $\D_{\sg}(R)$ and the specialization-closed subsets of $\sing R$.
\end{thm}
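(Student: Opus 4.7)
The plan is to establish the bijection in the standard way: send a thick subcategory $\mathcal{T}\subseteq\D_{\sg}(R)$ to $\Phi(\mathcal{T})=\bigcup_{X\in\mathcal{T}}\operatorname{Supp}_R X$, where $\operatorname{Supp}_R X=\{\p\in\spec R:X_\p\not\simeq0\text{ in }\D_{\sg}(R_\p)\}$, and send a specialization-closed subset $W\subseteq\sing R$ to $\Psi(W)=\thick\{R/\p:\p\in W\}$. It is straightforward that $\Phi$ lands in specialization-closed subsets of $\sing R$ and that $\Phi\circ\Psi=\id$; the substance of the theorem is to prove $\Psi\circ\Phi=\id$, namely that every $X\in\D_{\sg}(R)$ lies in $\thick\{R/\p:\p\in\operatorname{Supp}_R X\}$.

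The first step is a local-to-global reduction of a familiar sort for classifying thick subcategories of singularity categories: it suffices to verify, for each $\p\in\sing R$, the analogous classification for $\D_{\sg}(R_\p)$. For primes $\p\ne\m$ on the punctured spectrum, the hypothesis places $R_\p$ into one of two already-classified settings. When $R_\p$ is a hypersurface, the required classification is Stevenson's theorem for singularity categories of hypersurfaces; when $R_\p$ is Cohen--Macaulay of minimal multiplicity, the classification follows from previous results of Takahashi. Either way, the local classification at $R_\p$ is in place.

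This concentrates the remaining content at the closed point $\p=\m$, which is where Burchness enters. By the definition of Burch rings of positive depth one may pass to $\overline R=R/(\xx)$ for a maximal regular sequence $\xx$ so that $\overline R$ is Burch of depth zero, in a way compatible with the relevant singularity categories. Over a Burch ring of depth zero, Theorem~\ref{100} supplies the key input: any $M$ with $\I(M)=\m$ has $k$ as a direct summand of $\syz^r M$ for some $r\ge 2$, so $k\in\thick(M)$; and conversely, since $\overline R$ is artinian, every finitely generated module admits a finite filtration by copies of $k$, so every object of $\D_{\sg}(\overline R)$ lies in $\thick(k)$. This yields $\thick(k)=\D_{\sg}(\overline R)$, which is the local classification at the maximal ideal.

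The main obstacle is making the local-to-global reduction rigorous without assuming $R$ is Gorenstein, since $\D_{\sg}(R)$ cannot then be identified with a stable category of maximal Cohen--Macaulay modules. One must verify directly that localization and reduction modulo a regular sequence interact correctly with the support-based classification of thick subcategories, and that the Burch condition deforms through the regular sequence in the right way so that Theorem~\ref{100} can be brought to bear on the crucial generation statement at $\m$. Once these technical points are set up, gluing the local classifications via $\Phi$ and $\Psi$ produces the claimed bijection between thick subcategories of $\D_{\sg}(R)$ and specialization-closed subsets of $\sing R$.
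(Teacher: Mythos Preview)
Your outline differs substantially from the paper's route, and it has a genuine gap at the closed point.

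The paper does not argue via a prime-by-prime local-to-global reduction in $\D_{\sg}(R)$. Instead, it works through resolving subcategories of $\cm(R)$: the entire argument is to invoke the framework of \cite[Theorem~4.5]{NT}, replacing their key lemma \cite[Lemma~4.4]{NT} by the paper's Proposition~\ref{r1}, which asserts that for a Burch ring of depth $t$ and any $R$-module $M$ of infinite projective dimension, one has $\syz^t k\in\res_R M$. The hypotheses on the punctured spectrum enter because the machinery of \cite{NT} (built on \cite{stcm,crs}) requires them; they are not used to invoke Stevenson's theorem or a separate local classification at non-maximal primes. All the delicate local-to-global and completion issues you flag are absorbed into the proof of Proposition~\ref{r1} itself (via Lemmas~\ref{l52}--\ref{l54}), on the resolving-subcategory side where they are tractable, rather than being handled directly in $\D_{\sg}$.

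The concrete gap in your argument is at $\m$. You write that Theorem~\ref{100} gives $k\in\thick(M)$ whenever $\I(M)=\m$, but an arbitrary nonzero object $M$ of $\D_{\sg}(\overline R)$ need not satisfy $\I(M)=\m$, and showing that every module lies in $\thick(k)$ (the filtration argument) goes the wrong direction: what you need is $k\in\thick(M)$ for \emph{every} nonzero $M$, not $M\in\thick(k)$. The paper closes exactly this gap with Lemma~\ref{301}: from any nonfree $M$ one builds, inside $\res_R M$, a module $N$ with $\I_1(N)=\m$ (by taking a direct sum of the modules $M(x_i)$ of Lemma~\ref{l52} over a generating set of $\m$), and only then applies Proposition~\ref{203} to get $k$ as a summand of $\syz^2 N$. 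Without this construction, Theorem~\ref{100} simply does not apply to the object you are handed, and the generation statement at $\m$ is unproved. Your final paragraph acknowledges the remaining obstacles but defers them; those deferred points are precisely where the paper's work (Proposition~\ref{r1} and its supporting lemmas) lies.
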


Next we describe the structure of the paper as well as other notable results.
In Section \ref{basicSec} we state our convention, basic definitions and preliminary results. 
Section \ref{cyclicSec} is devoted to giving a sufficient condition for a module to have a second syzygy having a cyclic direct summand (Proposition \ref{p1}). This is a generalization of \cite[Lemma 4.1]{KV}, and has an application to provide an exact pair of zero divisors (Corollary \ref{c26}).
These materials are used in Section \ref{main1Sec} and are perhaps of independent interest. 

In Section \ref{main2Sec}, we focus on the study of Burch rings of positive depth.
We verify that the class of Gorenstein Burch rings coincides with that of hypersurfaces (Proposition \ref{r5}).
Cohen--Macaulay local rings of minimal multiplicity with infinite residue field are Burch (Proposition \ref{minmulti}). Quotients of polynomial rings by perfect ideals with linear resolution are Burch (Proposition \ref{detEx}).
We also consider the subtle question of whether the Burch property is preserved by cutting down by {\em any} regular sequence consisting of minimal generators of $\m$.
Remarkably, this holds for Cohen--Macaulay local rings of dimension one with minimal multiplicity (Proposition \ref{r13}).
However, the answer turns out to be negative in general (Example \ref{e44}).

In Section \ref{main3Sec} we focus more deeply on Burch ideals in a regular local ring.
We gave a complete characterization in dimension two and link Burch rings and Burch ideals to various other concepts. 
Moreover, we give a characterization of the Burch local rings $(R,\m,k)$ with $\m^3=0$ in terms of a Betti number of $k$, the embedding dimension and type of $R$ (Theorem \ref{cu}).
We also characterize the Burch monomial ideals of regular local rings (Proposition \ref{4}).

In Section \ref{main4Sec}, we explore the homological and categorical aspects of Burch rings.
We find out the significant property of Burch rings that every module of infinite projective dimension contains a high syzygy of the residue field in its resolving closure (Proposition \ref{r1}).
We apply this and make an analogous argument as in \cite{NT} to classify various subcategories.

\section{Convention, definitions and basic properties of Burch ideals and rings}\label{basicSec}

Throughout this paper, we assume that all rings are commutative and noetherian, that all modules are finitely generated and that all subcategories are full and strict.
For a local ring $(R,\m,k)$, we denote by $\edim R$ the embedding dimension of $R$, by $r(R)$ the (Cohen--Macaulay) type of $R$, and by $\k^R$ the Koszul complex of $R$, i.e., the Koszul complex of a minimal system of generators of $\m$.
We set $\k^R=0$ when $R$ is a field.
For an $R$-module $M$, we denote by $\ell_R(M)$ the length of $M$, by $\mu_R(M)$ the minimal number of generators of $M$, and by $\beta_i^R(M)$ the $i$th Betti number of $M$.
The $i$th syzygy of $M$ in the minimal free resolution of $M$ is denoted by $\syz_R^iM$.
We omit subscripts and superscripts if there is no fear of confusion.


The remaining of this section deals with the formal notion of Burch ideals and Burch rings and their basic properties.

\begin{dfn}
Let $(R,\m)$ be a local ring.
We define a {\em Burch ideal} as an ideal $I$ with $\m I\not =\m(I:_R \m)$.
Note by definition that any Burch ideal $I$ of $R$ satisfies $\depth R/I=0$.
\end{dfn}

Here are some quick examples of Burch ideals.
Many more examples will follow from our results later. 

\begin{ex}\label{r3}
\begin{enumerate}[(1)]
\item
Let $(R,xR)$ be a discrete valuation ring.
Then $(x^n)$ is a Burch ideal of $R$ for all $n\ge1$, since $x(x^n)=(x^{n+1})\ne(x^n)=x(x^{n-1})=x((x^n):(x))$.
\item
Let $I$ be an ideal of a local ring $(R,\m)$.
Put $J=\m I$ and suppose $J\ne0$.
Then $\m(J:\m)=J\not=\m J$, so $J$ is a Burch ideal of $R$.
\item 
By the previous item, if $(R,\m)$ has positive depth then $I=\m^t$ is Burch for  any $t\geq 1$. More generally, if $\m^{t+1} \subseteq I\subseteq \m^{t}$, then $I$ is Burch if and only if $I:\m\neq \m^t$ and $I\m \neq \m^{t+1}$. Using this one can show that the set of Burch ideals is Zariski-open in $\text{Grass}_k(r, \m^t/\m^{t+1})$, for each $r = \dim_k I/\m^{t+1}$. 
\item
Let $(R,\m)$ be a local ring of positive depth.
Let $I$ be an integrally closed ideal of $R$.
Then $\m I:\m = I$ by the determinantal trick, so it is Burch.
See Proposition \ref{23} below. 
\end{enumerate}
\end{ex}

The following proposition gives some basic characterizations of Burch ideals.

\begin{prop} \label{23}
Let $(R,\m)$ be a local ring and $I$ an ideal of $R$.
The following are equivalent.\\
{\rm(1)} $I$ is a Burch ideal.\qquad
{\rm(2)} $(I:\m)\ne(\m I: \m)$.\qquad
{\rm(3)} $\soc(R/I)\cdot \m/I\m\not=0$.\\
{\rm(4)} $\depth R/I=0$ and $r(R/\m I)\not=r(R/I)+\mu(I)$.\\
{\rm(5)} $I\widehat{R}$ is a Burch ideal of $\widehat{R}$, where $\widehat{R}$ is the completion of $R$.
\end{prop}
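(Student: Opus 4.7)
The plan is to establish the four equivalences $(1)\Leftrightarrow(2)$, $(1)\Leftrightarrow(3)$, $(1)\Leftrightarrow(4)$, and $(1)\Leftrightarrow(5)$ separately. Three of them are essentially formal manipulations with colons, products, and faithful flatness; the substance lies in $(1)\Leftrightarrow(4)$.

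For $(1)\Leftrightarrow(2)$, I would first record the automatic containments $(\m I:\m)\subseteq(I:\m)$ (since $\m I\subseteq I$) and $\m I\subseteq\m(I:\m)$ (since $I\subseteq I:\m$), and then argue that either equality forces the other. If $\m(I:\m)=\m I$, then each $x\in I:\m$ satisfies $x\m\subseteq\m(I:\m)=\m I$, placing $x$ in $\m I:\m$; conversely, if $I:\m=\m I:\m$, then $\m(I:\m)=\m(\m I:\m)\subseteq\m I$. For $(1)\Leftrightarrow(3)$, the key observation is that $I\cdot\m=\m I$ annihilates $\m/I\m$, making it an $R/I$-module on which $\soc(R/I)=(I:\m)/I$ acts with image exactly $\m(I:\m)/\m I$, which is nonzero precisely when $I$ is Burch.

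The main work is $(1)\Leftrightarrow(4)$. Both sides tacitly require $\depth(R/I)=0$ (the Burch condition does so, since $I:\m=I$ would force $\m(I:\m)=\m I$), so I would assume this. Using the short exact sequence
\[
0\to I/\m I\to R/\m I\to R/I\to 0,
\]
I would note that $I/\m I$ is $\m$-annihilated and hence lies inside $\soc(R/\m I)=(\m I:\m)/\m I$ (via the automatic containment $I\subseteq\m I:\m$), producing
\[
0\to I/\m I\to (\m I:\m)/\m I\to (\m I:\m)/I\to 0.
\]
This yields $r(R/\m I)=\mu(I)+\dim_k(\m I:\m)/I$. Since $(\m I:\m)/I\subseteq(I:\m)/I=\soc(R/I)$, this dimension is bounded by $r(R/I)$, with equality iff $\m I:\m=I:\m$, iff (by the already-proved $(1)\Leftrightarrow(2)$) $I$ is not Burch. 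So under $\depth(R/I)=0$, the inequality $r(R/\m I)\ne r(R/I)+\mu(I)$ is equivalent to Burchness. The subtlety is keeping the two distinct socles, the $\mu(I)$ correction term, and the depth hypothesis simultaneously in play.

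Finally, $(1)\Leftrightarrow(5)$ is faithful-flat descent. Since $\m$ is finitely generated and $R\to\widehat{R}$ is faithfully flat, both colon ideals and products commute with $-\otimes_R\widehat{R}$: one gets $\m(I:\m)\widehat{R}=\widehat{\m}(I\widehat{R}:\widehat{\m})$ and $(\m I)\widehat{R}=\widehat{\m}\,I\widehat{R}$, and faithful flatness detects equality of ideals, so Burchness passes both ways between $R$ and $\widehat{R}$. Overall I expect $(1)\Leftrightarrow(4)$ to be the main obstacle, purely for bookkeeping reasons; the remaining three equivalences are one-line arguments once the right identifications are in hand.
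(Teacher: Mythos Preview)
Your proposal is correct and matches the paper's approach: the paper also treats the equivalences separately, using the chain $\m I\subseteq I\subseteq(\m I:\m)\subseteq(I:\m)$ and length additivity for $(2)\Leftrightarrow(4)$, which is exactly your exact-sequence bookkeeping rephrased. The only tacit point (left implicit in both your outline and the paper) is that $\depth R/\m I=0$ follows from $\depth R/I=0$, needed so that $r(R/\m I)=\dim_k\soc(R/\m I)$; this is immediate since any minimal generator of $I$ lies in $\soc(R/\m I)$.
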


\begin{proof}
(1) $\Leftrightarrow$ (2):
If $(I:\m)=(\m I:\m)$, then $\m(I:\m)=\m(\m I:\m)=\m I$.
Conversely, if $\m I=\m(I:\m)$, then $(\m I:\m)=(\m(I:\m):\m)=(I:\m)$.

(1) $\Leftrightarrow$ (3):
As $\soc R/I=(I:\m)/I$, we have $\soc R/I\cdot \m/I\m=0$ if and only if $\m(I:\m)=\m I$.

(2) $\Leftrightarrow$ (4): There are inclusions $\m I\subseteq I \subseteq (\m I:\m)\subseteq(I:\m)$, which especially says that $(\m I:\m)\ne(I:\m)$ implies $\depth R/I=0$.
We have $\ell((I:\m)/\m I)=\ell((I:\m)/I)+\ell(I/\m I)=r(R/I)+\mu(I)$ if $\depth R/I=0$, and $\ell((\m I:\m)/\m I)=r(R/\m I)$.
Thus, under the assumption $\depth R/I=0$, the equalities $(I:\m)=(\m I :\m)$ and $r(R/\m I)=r(R/I)+\mu(I)$ are equivalent.

(1) $\Leftrightarrow$ (5): It is clear that $\m I =\m(I:_R \m)$ if and only if $\widehat{\m}I=\widehat{\m}(I:_{\widehat{R}} \widehat{\m})$.
\end{proof}

Recall that an ideal $I$ of a local ring $(R,\m)$ is {\em $\m$-full} (resp.\ {\em weakly $\m$-full}) if $(\m I:x)=I$ for some $x\in\m$ (resp.\ $(\m I:\m)=I$).
Clearly, every $\m$-full ideal is weakly $\m$-full. The notion of $\m$-full ideals has been studied by many authors so far; see \cite{CNR,G,GH,W,W2} for instance.
Notably, it is fundamental to figure out the connections between $\m$-full ideals and another class of ideals.
For example, $\m$-primary integrally closed ideals are $\m$-full or equal to the nilradical of $R$ under the assumption that the residue field $k$ is infinite; see \cite[Theorem (2.4)]{G}.
There are many related classes of ideals, such as ideals satisfying the Rees property, contracted ideals and basically full ideals.
See \cite{HLNR,R} for the hierarchy of these classes.
The notion of weakly $\m$-full ideals is introduced in \cite[Definition 3.7]{CIST}.
The class of weakly $\m$-full ideals coincide with that of basically full ideals if they are $\m$-primary; see \cite[Theorem 2.12]{HRR}.
The following corollary is immediate from the implication (2) $\Rightarrow$ (1) in the above proposition.

\begin{cor}\label{2}
Let $(R,\m)$ be a local ring.
Let $I$ be an ideal of $R$ such that $\depth R/I=0$.
If $I$ is weakly $\m$-full, then it is Burch.
\end{cor}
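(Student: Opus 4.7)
The plan is to deduce this directly from the equivalence (1) $\Leftrightarrow$ (2) of Proposition \ref{23}, which characterizes Burch ideals by the condition $(I:\m)\ne(\m I:\m)$. Given the hypothesis that $I$ is weakly $\m$-full, I would first invoke the defining identity $(\m I:\m)=I$, so that verifying condition (2) reduces to checking the single inequality $(I:\m)\ne I$.

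Next I would use the depth hypothesis to supply precisely this inequality. Since $\depth R/I=0$, the maximal ideal $\m$ is an associated prime of $R/I$, so $\soc(R/I)=(I:\m)/I$ is nonzero; consequently $(I:\m)\supsetneq I$. Combining this with the previous step gives $(I:\m)\ne I=(\m I:\m)$, which is exactly condition (2) of Proposition \ref{23}. The implication (2) $\Rightarrow$ (1) then delivers that $I$ is Burch.

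There is essentially no obstacle: the argument is little more than a tautological unpacking of definitions once Proposition \ref{23} is in hand. The only point worth flagging is that the depth hypothesis is genuinely needed — without it, a weakly $\m$-full ideal could satisfy $(I:\m)=I$ (for example when $R/I$ has positive depth), in which case $(I:\m)=(\m I:\m)=I$ and condition (2) fails.
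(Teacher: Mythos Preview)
Your proposal is correct and matches the paper's approach exactly: the paper simply remarks that the corollary is immediate from the implication (2) $\Rightarrow$ (1) in Proposition \ref{23}, and your argument is precisely the unpacking of that remark.
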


Let $f:(S,\n,k)\to(R,\m,k)$ be a surjective homomorphism of local rings, and set $I=\ker f$.
Choi \cite{Ch} defines the invariant
$$
c_R(S,f)=\dim_k(\n(I:_S\n)/\n I).
$$
Clearly, an ideal $I$ of a local ring $(S,\n)$ is Burch if and only if Choi's invariant $c_{S/I}(S,\pi)$ is positive, where $\pi$ is the canonical surjection $S\to S/I$.
We give a description of Choi's invariant for a regular local ring.

\begin{prop} \label{p25}
Let $(R,\m,k)$ be a local ring, $(S,\n,k)$ a regular local ring, and $f:S\to R$ a surjective homomorphism with kernel $I$.
Then 
\begin{align*}
c_R(S,f)=
\begin{cases}
\dim_k \soc R+\dim_k \h_1(\k^R)-\edim R-\dim_k \h_1(\k^{R'})+\edim R' & (\text{if }I\ne\n),\\
\dim_k\n/\n^2 & (\text{if }I=\n),
\end{cases}
\end{align*}
where $R'=R/\soc R$.
\end{prop}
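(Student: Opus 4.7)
The plan is to handle the case $I=\n$ by inspection, and for $I\ne\n$ to compute $c_R(S,f)=\dim_k\n(I:_S\n)/\n I$ by a length count along a short chain of ideals, then convert the result into the invariants in the statement via the standard identification of first Koszul homology with minimal generators of a regular presentation.

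If $I=\n$, then $(I:_S\n)=S$, so $\n(I:_S\n)=\n$ and $\n I=\n^2$, giving $c_R(S,f)=\dim_k\n/\n^2$ at once.

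Suppose now $I\ne\n$ and set $J=(I:_S\n)$. Two observations drive the count. First, $J/I$ is annihilated by $\n$ and canonically coincides with $\soc(S/I)=\soc R$; in particular $\n J\subseteq I$, and $S/J=(S/I)/(J/I)=R/\soc R=R'$. Second, this produces the chain
\[
\n I\subseteq\n J\subseteq I\subseteq J,
\]
all of whose successive quotients are finite-dimensional $k$-vector spaces (the outermost one is killed by $\n$ because $\n^2 J\subseteq \n I$). Using $\dim_k(X/\n X)=\mu_S(X)$ for any finitely generated $S$-module $X$, two parallel length counts of $J/\n I$ along the chain yield
\[
c_R(S,f)=\dim_k\n J/\n I=\mu_S(I)-\mu_S(J)+\dim_k\soc R.
\]

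To rewrite this in the stated form, I would invoke the standard identity
\[
\mu_S(K)=\dim_k\h_1(\k^{S/K})+\edim S-\edim(S/K),
\]
valid for any ideal $K$ of a regular local ring $(S,\n)$ with $S/K$ having residue field $k$. It is proved by killing the minimal generators of $\n$ lying in $K$ (a regular sequence in $S$) to reduce to the case $K\subseteq\n^2$, where $\k^{S/K}=\k^S\otimes_S S/K$ is genuinely a resolution and $\h_1\cong\Tor_1^S(k,S/K)\cong K/\n K$. Applying this with $K=I$ and with $K=J$ (so that $S/J=R'$), and substituting into the expression above, the two $\edim S$ contributions cancel and the claimed formula
\[
c_R(S,f)=\dim_k\soc R+\dim_k\h_1(\k^R)-\edim R-\dim_k\h_1(\k^{R'})+\edim R'
\]
drops out.

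The only delicate point I anticipate is that $J=(I:_S\n)$ need not lie in $\n^2$ even when $I$ does, so one must apply the Koszul-homology identity in the general form above rather than in the commonly quoted version requiring $K\subseteq\n^2$; the change-of-rings reduction described above handles this uniformly.
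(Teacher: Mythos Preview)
Your argument is correct and follows essentially the same route as the paper: both compute $\dim_k\n J/\n I$ for $J=(I:_S\n)$ via a length count along the inclusions $\n I\subseteq I\subseteq J$ and $\n J\subseteq J$, then invoke the identity $\mu_S(K)-\edim S=\dim_k\h_1(\k^{S/K})-\edim(S/K)$ for $K=I$ and $K=J$. Your version is slightly more explicit in justifying this Koszul identity (including the reduction for $K\not\subseteq\n^2$), but the structure of the proof is the same.
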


\begin{proof} 
Put $J=(I:_S \n)$.
We may assume $I\not=\n$, and hence $J\not=S$.
Then there are equalities
\begin{align*}
c_R(S,f)=\dim_k\n J/\n I
&=\ell(J/I)+(\ell(I/\n I)-\ell(\n/\n^2))-(\ell(J/\n J)-\ell(\n/\n^2))\\
&=\dim_k \soc R+(\dim_k\h_1(\k^R)-\edim R)-(\dim_k\h_1(\k^{R'})-\edim R').
\end{align*}
Now the proof of the proposition is completed.
\end{proof}

The above result especially says that in the case where $I\ne\n$ the number $c_R(S,f)$ is determined by the target $R$ of the surjection $f$.
Thus the following result is immediately obtained.

\begin{cor}[cf.\,{\cite[Theorem 2.4]{Ch}}]\label{5}
Let $R$ be a local ring that is not a field.
Let $(S_1,\n_1)$ and $(S_2,\n_2)$ be regular local rings, and $f_i:S_i\to R$ surjective homomorphisms for $i=1,2$.
Then the equality $c_R(S_1,f_1)=c_R(S_2,f_2)$ holds.
In particular, $\ker f_1$ is Burch if and only if so is $\ker f_2$.
\end{cor}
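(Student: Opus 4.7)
The plan is to deduce this immediately from Proposition \ref{p25}. The key observation is that the first case of the formula in \ref{p25} has a right-hand side
\[
\dim_k \soc R+\dim_k \h_1(\k^R)-\edim R-\dim_k \h_1(\k^{R'})+\edim R'
\]
involving only invariants of $R$ and of $R'=R/\soc R$; neither $S$ nor $f$ appears. So once we verify that we are in this first case for both $f_1$ and $f_2$, the equality $c_R(S_1,f_1)=c_R(S_2,f_2)$ is automatic.

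First I would check that the hypothesis that $R$ is not a field forces $\ker f_i\ne\n_i$ for $i=1,2$. Indeed, $f_i$ is surjective with $S_i$ local, so $\ker f_i\subseteq\n_i$; if equality held then $R\cong S_i/\n_i$ would be the residue field $k$, contradicting our assumption. Consequently, Proposition \ref{p25} applies in its first case to each $f_i$, yielding the same expression in terms of $R$ alone. This proves the equality $c_R(S_1,f_1)=c_R(S_2,f_2)$.

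For the ``in particular'' statement, recall the observation made just before Proposition \ref{p25}: an ideal $I$ of a local ring $(S,\n)$ is Burch exactly when Choi's invariant $c_{S/I}(S,\pi)$ is positive, where $\pi$ is the canonical surjection $S\to S/I$. Applying this with $S=S_i$ and $\pi=f_i$ (so $S/I\cong R$), we see that $\ker f_i$ is Burch if and only if $c_R(S_i,f_i)>0$. Since these two numbers are equal, the two Burch conditions are equivalent.

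The argument is essentially bookkeeping on top of Proposition \ref{p25}, so there is no real obstacle; the only subtlety worth flagging is the need to exclude the case $R=k$, which is exactly why the formula in \ref{p25} has two branches (the second branch $\dim_k\n/\n^2$ genuinely depends on the source $S$, as one sees by comparing $S=k$ with $S=k[[x]]$ both surjecting onto $k$).
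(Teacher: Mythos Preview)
Your proof is correct and follows essentially the same route as the paper: both deduce the result directly from Proposition \ref{p25} by observing that, once $R$ is not a field (so $\ker f_i\ne\n_i$), the formula for $c_R(S_i,f_i)$ involves only invariants of $R$. Your write-up merely spells out the details (why $\ker f_i\ne\n_i$, and the ``in particular'' clause) a bit more explicitly than the paper does.
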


We are now ready to define Burch rings.

\begin{dfn} \label{d310}
Let $(R,\m)$ be a local ring of depth $t$.
Denote by $\widehat{R}$ the $\m$-adic completion of $R$.
We say that $R$ is {\em Burch} if there exist a maximal $\widehat{R}$-regular sequence $\xx=x_1,\dots,x_t$ in $\widehat{R}$, a regular local ring $S$ and a Burch ideal $I$ of $S$ such that $\widehat{R}/(\xx)\cong S/I$.
\end{dfn}

\begin{rem} \label{r312}
If $I$ is a Burch ideal of a local ring $(R,\m)$, then $R/I$ is a Burch ring of depth zero.
Indeed, $I\widehat{R}$ is a Burch ideal of $\widehat{R}$ by Proposition \ref{23}.
Take a Cohen presentation $\widehat R\cong S/J$, where $(S,\n)$ is a regular local ring.
Let $I'$ be the ideal of $S$ such that $I'\supseteq J$ and $I'/J=I\widehat{R}$.
Then one can easily verify that $\n I'\not=\n(I':_S \n)$, that is, $I'$ is a Burch ideal of $S$.
Note that the completion of the local ring $R/I$ is isomorphic to $S/I'$.
Hence $R/I$ is a Burch ring of depth zero.
\end{rem}

Let $R$ be a local ring.
The {\em codimension} and {\em codepth} of $R$ are defined by
$$
\codim R=\edim R-\dim R,\qquad
\codepth R=\edim R-\depth R.
$$
Then $R$ is said to be a {\em hypersurface} if $\codepth R\le1$.
This is equivalent to saying that the completion $\widehat{R}$ of $R$ is isomorphic to $S/(f)$ for some regular local ring $S$ and some element $f\in S$.

\begin{ex} \label{e310}
If $R$ is a hypersurface, then it is a Burch ring.
Indeed, take a regular sequence $\xx$ in $\widehat{R}$ such that $\widehat{R}/(\xx)$ is an artinian local ring with $\edim \widehat{R}/(\xx)\le 1$.
Then $\widehat{R}/(\xx)$ is isomorphic to the quotient ring of a discrete valuation ring $S$ by a nonzero ideal $I$.
By Example \ref{r3}(1), the ideal $I$ of $S$ is Burch.
\end{ex}

We define the invariant $c_R$ of a local ring $(R,\m,k)$ by
$$
c_R=\dim_k\soc R+\dim_k\h_1(\k^R)-\edim R-\dim_k\h_1(\k^{R'})+\edim R'.
$$
Here, we set $R'=R/\soc R$, and adopt the convention that $\dim_k \h_1(\k^{R'})=0=\edim R'$ in the case where $R'=0$ (i.e. $R$ is a field).
Then we can characterize the Burch rings of depth zero:

\begin{lem} \label{l310}
Let $(R,\m,k)$ be a local ring.
Then $c_R=c_{\widehat{R}}$, and the following are equivalent.

{\rm(1)} $R$ is a Burch ring and $\depth R=0$.\qquad{\rm(3)} $c_R\not=0$.

{\rm(2)} $\widehat{R}$ is a Burch ring and $\depth R=0$.\qquad{\rm(4)} $c_R>0$.\\
Moreover, if $R$ is a Burch ring of depth zero and isomorphic to $S/I$ for some regular local ring $(S,\n)$ and some ideal $I$ of $S$, then $I$ is a Burch ideal of $S$ or $I=\n$.
\end{lem}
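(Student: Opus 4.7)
To show $c_R = c_{\widehat R}$, I will check that the five invariants defining $c$ are all preserved by completion: socle, Koszul homology of the maximal ideal, and embedding dimension agree between $R$ and $\widehat R$; and since $\soc R$ has finite length, $\soc R = \soc \widehat R$, so the completion of $R' = R/\soc R$ is canonically $(\widehat R)'$ and the two primed terms match as well. The equivalence (1)$\Leftrightarrow$(2) is then immediate from Definition \ref{d310}, since being Burch is defined through $\widehat R$ and $\depth R = \depth \widehat R$.

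For (3)$\Leftrightarrow$(4) I aim to prove $c_R \ge 0$ always, with $c_R > 0$ forcing $\depth R = 0$. If $\depth R > 0$ then $\soc R = 0$ and $R' = R$, so the five-term formula collapses to $c_R = 0$. For non-negativity, if $R$ is a field then $\soc R = R$, $\k^R = 0$, $\edim R = 0$, and $R' = 0$ give $c_R = 1$; otherwise I will pick a Cohen presentation $\widehat R \cong S/I$ with $I \ne \n$, and the first case of Proposition \ref{p25} together with $c_R = c_{\widehat R}$ identifies $c_R$ with the length $\dim_k \n(I:\n)/\n I \ge 0$.

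The core implication (1)$\Leftrightarrow$(3) also goes through a Cohen presentation. Assuming (1), Definition \ref{d310} with empty regular sequence supplies $\widehat R \cong S/I$ with $I$ Burch in $S$, so when $\widehat R$ is not a field, Proposition \ref{p25} yields $c_R = \dim_k \n(I:\n)/\n I > 0$, while the field case is handled by the direct computation above. Conversely, $c_R > 0$ already forces $\depth R = 0$; picking any Cohen presentation $\widehat R \cong S/I$, Proposition \ref{p25} forces $I$ to be Burch in $S$ unless $\widehat R = k$, in which case $k$ is itself Burch via Example \ref{r3}(1) (taking $S$ a discrete valuation ring and $I$ its maximal ideal). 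For the moreover clause, given $R \cong S/I$ with $R$ Burch of depth zero but $S$ not necessarily complete, I will pass to $\widehat R \cong \widehat S / I\widehat S$: if $R$ is a field then $S/I = k$ forces $I = \n$, otherwise Proposition \ref{p25} combined with $c_{\widehat R} = c_R > 0$ shows $I\widehat S$ is Burch in $\widehat S$, and Proposition \ref{23}(5) transfers this to $I$ being Burch in $S$.

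The main delicate point throughout is the boundary case $\widehat R = k$: Proposition \ref{p25} uses a different formula there and Corollary \ref{5} excludes that case, so at each step I must verify by hand that $c_k = 1$ and that $k$ qualifies as Burch (as a DVR modulo its maximal ideal), in order to keep all the equivalences consistent at this boundary.
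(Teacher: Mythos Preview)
Your proposal is correct and follows essentially the same route as the paper: both arguments reduce to identifying $c_R$ with Choi's invariant $\dim_k \n(I:\n)/\n I$ for a Cohen presentation via Proposition~\ref{p25}, whence nonnegativity and the equivalence with Burchness of the defining ideal are immediate. Your treatment is somewhat more explicit than the paper's in two respects: you separately verify $c_R=0$ when $\depth R>0$ (the paper leaves this implicit in ``(3) $\Leftrightarrow$ (4) is obvious''), and you handle the field case $R=k$ by hand throughout, whereas the paper largely elides it. For the moreover clause you argue directly through Proposition~\ref{p25} and Proposition~\ref{23}(5), while the paper routes through Corollary~\ref{5}; since Corollary~\ref{5} is itself an immediate consequence of Proposition~\ref{p25}, this is only a cosmetic difference.
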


\begin{proof}
The numbers $\dim_k \soc R,\,\dim_k \h_1(\k^R),\,\edim R,\,\dim_k \h_1(\k^{R'}),\edim R'$ are preserved by the completion of $R$.
In particular, one has $c_R=c_{\widehat{R}}$.
Furthermore, take a Cohen presentation $\widehat{R}\cong S/I$ with a complete regular local ring $S$.
Letting $\pi:S\to S/I$ be the natural surjection, we have $c_{\widehat{R}}=c_R(S,\pi)$.
This especially shows that $c_R$ is nonnegative.
Now we show the equivalence of (1)--(4).
It is obvious that (1) and (3) are equivalent to (2) and (4), respectively.
The equivalence of (2) and (3) follows from Proposition \ref{p25}.
Finally, we show the last assertion.
Suppose that $R$ is Burch of depth zero and that $R\cong S/I$, where $S$ is a regular local ring and $I$ is an ideal of $S$.
Then $\widehat R\cong T/J$ for some regular local ring $T$ and a Burch ideal $J$ of $T$.
There are surjections from the regular local rings $\widehat S$ (the completion of $S$) and $T$ to the local ring $\widehat S/I\widehat S\cong\widehat R\cong T/J$, and the kernel of the latter is the Burch ideal $J$.
Corollary \ref{5} implies that $I\widehat S$ is a Burch ideal of $\widehat S$, and $I$ is a Burch ideal of $S$ by Proposition \ref{23}.
\end{proof}

We end this section by proving an useful characterization of Burch ideals when $\depth R>1$. The only if direction is known for $\m$-full ideals; see \cite[Corollary 7]{W2}. 

\begin{lem} \label{31}
Let $(R,\m)$ be a local ring of depth $>1$.
An ideal $I$ of $R$ is Burch if and only if there exists a non-zerodivisor $a\in \m$ such that $R/\m$ is a direct summand of the $R$-module $I/aI$.
\end{lem}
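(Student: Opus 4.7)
The plan is to translate the direct-summand condition into an elementary statement about elements of $I$, then verify the equivalence directly. The basic fact I would invoke is that for a finitely generated module $M$ over a local ring $(R,\m,k)$, the residue field $k$ is a direct summand of $M$ if and only if $\soc M \not\subseteq \m M$; one direction is immediate, and the other uses Nakayama to extend a socle element lying outside $\m M$ to a minimal generating set of $M$, producing a splitting $M \to k$. Applied to $M = I/aI$, this becomes: $k$ is a direct summand of $I/aI$ if and only if there exists $y\in I$ with $\m y \subseteq aI$ and $y \notin \m I$. The lemma then reduces to showing that $I$ is Burch precisely when such an $a$ and $y$ exist.

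For the sufficiency direction, suppose $a \in \m$ is a non-zerodivisor and $y \in I$ satisfies $\m y \subseteq aI$ and $y \notin \m I$. The key observation is that $\m y \subseteq aR$ forces $y \in aR$: the image $\bar y \in R/aR$ lies in $\soc(R/aR)$, but since $a$ is a non-zerodivisor and $\depth R \ge 2$ we have $\depth R/aR \ge 1$ and hence $\soc(R/aR) = 0$. Writing $y = ar$ and cancelling the non-zerodivisor $a$ in $a\m r = \m y \subseteq aI$ gives $\m r \subseteq I$, so $r \in (I:\m)$. Then $y = ar \in \m(I:\m)$ while $y \notin \m I$, proving $\m(I:\m) \ne \m I$.

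For the necessity direction, assume $I$ is Burch, so we can pick $r \in (I:\m)$ and $b \in \m$ with $br \notin \m I$. The goal is to replace $b$ by a non-zerodivisor $a \in \m$ with the same property, after which $y = ar$ works: $y \in I$, $\m y = a\m r \subseteq aI$, and $y \notin \m I$. Consider the ideal $K = (\m I :_R r) \cap \m$, which is properly contained in $\m$ because $b \notin K$. Since $\depth R \ge 1$, every associated prime $\p \in \ass R$ is strictly contained in $\m$, so by prime avoidance (which permits the single non-prime ideal $K$ alongside the primes), the union $K \cup \bigcup_{\p\in\ass R}\p$ cannot cover $\m$; any $a$ in the complement is a non-zerodivisor in $\m$ with $ar \notin \m I$.

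The only genuine use of the hypothesis $\depth R > 1$ appears in the sufficiency direction via $\soc(R/aR)=0$; the necessity step needs only $\depth R \ge 1$. I expect the main subtlety to be purely bookkeeping — recognising that $K$ is in fact an ideal of $R$ (not just of $\m$) so that prime avoidance applies, and being careful that the element $y$ produced in each direction satisfies simultaneously $y \in I$, $y \notin \m I$, and $\m y \subseteq aI$.
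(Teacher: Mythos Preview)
Your argument is correct and follows the same route as the paper's. The converse direction is identical: both use $\depth R/(a)>0$ to force the element $y$ (the paper calls it $c$) into $aR$, then cancel the non-zerodivisor $a$ to land in $(I:\m)$. For the forward direction, the paper simply picks $a\in\m$ and $b\in(I:\m)$ with $ab\notin\m I$, observes $a\notin\m^2$, and constructs the splitting $R/\m\hookrightarrow I/aI$ directly; it does not verify that $a$ is a non-zerodivisor. Your prime-avoidance step---replacing an arbitrary $b\in\m$ by a non-zerodivisor $a$ lying outside both $(\m I:_R r)$ and $\bigcup_{\p\in\ass R}\p$---is therefore more careful than the paper's own argument and is what actually secures the lemma as stated.
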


\begin{proof}
Assume that $I$ is Burch.
Then there exist $a\in \m$ and $b\in(I:_R \m)$ such that $ab\in I\setminus \m I$.
We have $a\not\in\m^2$, since otherwise $ab\in \m^2(I:_R\m)=\m I$.
As $b\m\subseteq I$, it holds that $ab\m\subseteq aI$.
We can define an $R$-homomorphism $f:R/\m\to I/aI$ by $f(\overline1)=\overline{ab}$.
As $ab\not\in \m I$, the element $\overline{ab}$ is a part of a minimal system of generators of $I/aI$, and hence $f$ is a split monomorphism.

Conversely, assume that there is a split monomorphism $f:R/\m\to I/aI$, where $a\in R$ is a non-zerodivisor.
Let $c\in I$ be the preimage of $f(\overline1)\in I/aI$.
Then $c\m\subseteq aI\subseteq (a)$.
The assumption $\depth R>1$ implies $\depth R/(a)>0$.
Hence $c$ has to be in $(a)$, that is, there exists $b\in R$ with $c=ab$.
Observe $ab\m=c\m\subseteq aI$.
Then $a$ being non-zerodivisor yields $b\m\in I$.
In other words, $b\in(I:_R \m)$.
The image of $ab=c$ is a part of a minimal system of generators of $I/aI$, and we have $ab\not\in \m I$.
Thus $\m(I:_R \m)\not=\m I$, which means that $I$ is a Burch ideal.
\end{proof}

\begin{rem}
It is worth noting that Lemma \ref{31} can be used to give a quick proof of Theorem \ref{re1} when $\depth R>1$ and $n>1$. Namely, if $\Tor_n^R(R/I,M)=\Tor_{n+1}^R(R/I,M)=0$ then it follows that $\Tor_n^R(I/aI, M)=0$, which
implies that $\Tor_n^R(k, M)=0$.
\end{rem}

\section{Cyclic direct summands of second syzygies}\label{cyclicSec}

The main purpose of this section is to study sufficient conditions for an $R$-module to have a cyclic direct summand in its second syzygy. They will be used in the proofs of Section \ref{main1Sec} and are perhaps of independent interest. In fact, some of our proofs were motivated by the work of Kustin-Vraciu (\cite{KV}) and Striuli-Vraciu (\cite{SV}) which focused on different but related problems. 

We start by some simple criteria for a homomorphism $f:R\to M$ to be a split monomorphism.

\begin{lem} \label{l21}
Let $(R,\m)$ be a local ring of depth zero.
Let $f:R \to M$ be a homomorphism of $R$-modules.
Assume one of the following conditions holds.

{\rm(a)} $R$ is Gorenstein.\quad
{\rm(b)} $M$ is free.\quad
{\rm(c)} $M$ is a syzygy (i.e., a submodule of a free module).\\
Then the followings are equivalent.

{\rm(1)} $f$ is a split monomorphism.\qquad
{\rm(2)} $f$ is a monomorphism.\qquad
{\rm(3)} $f(\soc R)\not=0$.
\end{lem}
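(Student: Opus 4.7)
The plan is to establish (1)$\Rightarrow$(2)$\Rightarrow$(3)$\Rightarrow$(1). The first arrow is tautological, and the second is immediate: $\depth R=0$ gives $\m\in\ass R$, so $\soc R\neq 0$, and an injective $f$ then carries this nonzero submodule of $R$ to a nonzero submodule of $M$. All the content lies in (3)$\Rightarrow$(1), which I would handle by cases (b), (c), (a).

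For case (b), I would write $M\cong R^n$ and $f(1)=(m_1,\ldots,m_n)$. If every $m_i$ lay in $\m$, then for any $s\in\soc R$ we would have $sm_i\in\m\cdot\soc R=0$, contradicting $f(\soc R)\neq 0$. So some $m_i$ is a unit, and $m_i^{-1}$ times the $i$th projection $R^n\to R$ is a retraction of $f$. Case (c) reduces to (b) by fixing an embedding $M\hookrightarrow R^n$: the composite $g:R\to M\hookrightarrow R^n$ still satisfies $g(\soc R)\neq 0$ because the inclusion is injective, case (b) produces a retraction $r:R^n\to R$ of $g$, and restricting $r$ to $M$ retracts $f$.

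Case (a) requires a different flavor of argument. A Gorenstein local ring of depth zero is zero-dimensional, hence Artinian Gorenstein; in particular $R$ is self-injective and $\soc R$ is a simple ideal. Condition (3) forces $\ker f\cap\soc R=0$. The key input is that in an Artinian local ring with simple socle every nonzero ideal contains $\soc R$: for a nonzero ideal $J$, take the smallest $N\ge 1$ with $\m^N J=0$ (which exists by Artinianness plus Nakayama); then $\m^{N-1}J$ is a nonzero submodule of $\soc R$, hence equals it. Applying this to $J=\ker f$ gives $\ker f=0$, so $f$ is a monomorphism, and self-injectivity of $R$ upgrades $f$ to a split monomorphism.

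The main obstacle I anticipate is the final step of case (a), namely the socle-containment lemma; I would supply the short Nakayama/DCC argument inline rather than cite it, both because it is brief and because it pinpoints exactly where the Gorenstein hypothesis is used (to guarantee the socle is simple and $R$ is self-injective). The remaining arguments are essentially linear-algebra bookkeeping.
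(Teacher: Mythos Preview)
Your proposal is correct and follows essentially the same route as the paper's proof: the easy implications $(1)\Rightarrow(2)\Rightarrow(3)$, then $(3)\Rightarrow(1)$ handled by first treating the free case, reducing the syzygy case to it via an embedding into a free module, and treating the Gorenstein case by combining the essentiality of the socle (your Nakayama argument, which the paper leaves implicit) with self-injectivity (the paper phrases this as $\Ext^1_R(C,R)=0$). The only differences are cosmetic: you argue case~(b) directly while the paper uses the contrapositive, and you spell out the socle-containment step that the paper compresses into ``Hence $\ker f=0$.''
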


\begin{proof}
The implications (1) $\Rightarrow$ (2) $\Rightarrow$ (3) are clear.
To show (3) $\Rightarrow$ (1), put $C=\cok f$.

(a) As $R$ is Gorenstein, we have $\soc R\cong R/\m$.
The equality $f(\soc R)\not=0$ implies $\ker f\cap \soc R=0$.
Hence $\ker f=0$, and $f$ is injective.
As $\Ext^1_R(C,R)=0$, the map $f$ is split injective.

(b) If $f$ is not split injective, then $\im f$ is contained in $\m M$ by the assumption that $M$ is free.
This yields that the inclusions $\ker f \supseteq \ann(\m M) \supseteq \soc R$ hold.

(c) Let $g:M\to F$ be a monomorphism with $F$ free.
The composition $gf:R \to F$ satisfies $gf(\soc R)\not=0$.
By the previous argument, $gf$ is split injective.
There is a retraction $r:F\to R$ with $rgf=\text{id}_R$.
We see that $rg:M\to R$ is a retraction of $f$.
Therefore $f$ is split injective.
\end{proof}

Next we consider $R$-homomorphisms from a cyclic $R$-module to an $R$-module.

\begin{lem} \label{l22}
Let $R$ be a ring, $I$ an ideal of $R$ and $M$ an $R$-module.
Consider an $R$-homomorphism $f:R/I \to M$.
Then $f$ is split injective if and only if the composition map $pf:R/I \to M/IM$ is split injective, where $p:M\to M/IM$ is the natural surjection.
\end{lem}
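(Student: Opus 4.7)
The statement is really an instance of a general adjunction-like phenomenon: $R$-homomorphisms out of $M$ landing in a module annihilated by $I$ are the same as $R$-homomorphisms out of $M/IM$. So the plan is to exploit this to match retractions of $f$ with retractions of $pf$ directly.

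For the ``only if'' direction, I would start from a retraction $r\colon M\to R/I$ of $f$, i.e.\ $rf=\id_{R/I}$. Since $R/I$ is annihilated by $I$, we have $r(IM)\subseteq I\cdot(R/I)=0$, so $r$ descends uniquely to an $R$-homomorphism $\bar r\colon M/IM\to R/I$ with $\bar r\circ p=r$. Composing with $f$ gives $\bar r\circ(pf)=rf=\id_{R/I}$, so $pf$ is split injective.

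For the ``if'' direction, I would start from a retraction $s\colon M/IM\to R/I$ of $pf$, i.e.\ $s\circ pf=\id_{R/I}$. Then simply set $r=s\circ p\colon M\to R/I$; this is an $R$-homomorphism satisfying $rf=s(pf)=\id_{R/I}$, so $f$ is split injective.

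I do not anticipate any real obstacle here: the proof is essentially the universal property of $M/IM$ as the largest quotient of $M$ annihilated by $I$, applied to maps into $R/I$. The one thing worth being careful about is the direction of the factoring --- that it is $r$ (a retraction) rather than $f$ itself that one factors through $p$ --- but once the observation $r(IM)=0$ is in place the rest is formal.
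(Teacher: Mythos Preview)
Your proof is correct and matches the paper's argument essentially line for line: both directions come down to factoring a retraction $M\to R/I$ through the quotient $p:M\to M/IM$, which the paper also justifies by noting such a map must kill $IM$. You are in fact slightly more explicit than the paper about why the factoring exists.
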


\begin{proof}
Suppose $f$ is split injective.
Then there is an $R$-homomorphism $g:M \to R/I$ such that $gf=\text{id}_{R/I}$.
On the other hand, $g$ factor through $p:M \to M/IM$, that is $g=g'p$ for some $g':M/IM \to R/I$.
So we see that $g'$ is a retraction of $pf$.
Next, suppose $pf$ is split injective.
Then there is an $R$-homomorphism $h:R/I \to M/IM$ such that $hpf=\text{id}_{R/I}$.
Thus $hp:M \to R/I$ is a retraction of $f$.
\end{proof}

For a matrix $A$ over $R$ we denote by $\I_i(A)$ the ideal of $R$ generated by the $i$-minors of $A$.
For a linear map $\phi$ of free $R$-modules, we define $\I_i(\phi)$ as the ideal $\I_i(A)$, where $A$ is a presentation matrix of $\phi$.
The following lemma is well-known; we state it for the convenience of the reader.

\begin{lem} \label{l23}
Let $R^n \xrightarrow{d} R^m \to M \to 0$ be exact.
If $\I_1(d)\subseteq I$, then $M/IM$ is $R/I$-free.
\end{lem}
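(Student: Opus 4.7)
The plan is to apply the right-exact functor $-\otimes_R R/I$ to the given presentation and exploit the hypothesis $\I_1(d)\subseteq I$ to show that the tensored differential is zero. Right-exactness of $-\otimes_R R/I$ turns the exact sequence $R^n\xrightarrow{d}R^m\to M\to0$ into
\[
(R/I)^n \xrightarrow{\,\bar d\,} (R/I)^m \to M/IM \to 0,
\]
where $\bar d$ is represented by the image in $R/I$ of any matrix representing $d$ over $R$.

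Next, I would observe that by definition $\I_1(d)$ is the ideal of $R$ generated by all entries of such a matrix. The assumption $\I_1(d)\subseteq I$ therefore forces every entry of the reduced matrix representing $\bar d$ to be zero in $R/I$, so $\bar d=0$. Substituting this into the right-exact sequence above yields an isomorphism $M/IM\cong(R/I)^m$, which exhibits $M/IM$ as a free $R/I$-module of rank $m$.

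There is essentially no obstacle here: the argument is a direct reduction mod $I$, and the only thing being used is the standard identification of $\I_1(d)$ with the ideal generated by the entries of a presentation matrix of $d$. No finiteness or local hypothesis on $R$ is needed for the statement as formulated, so the proof should occupy only a few lines.
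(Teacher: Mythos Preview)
Your proof is correct and follows exactly the same approach as the paper: tensor the presentation with $R/I$, observe that the induced map $\bar d$ vanishes because all matrix entries lie in $I$, and conclude $M/IM\cong(R/I)^m$.
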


\begin{proof}
The tensored sequence $(R/I)^n\xrightarrow{d\otimes R/I} (R/I)^m \to M/IM \to 0$ is exact.
Since $\I_1(d)$ is contained in $I$, we see that $d\otimes R/I=0$, and hence $M\cong(R/I)^m$.
\end{proof}

We generalize \cite[Lemma 4.1]{KV} as follows.

\begin{prop} \label{p1}
Let $(S,\n,k)$ be a local ring and $I\subseteq J$ ideals of $S$.
Set $R=S/I$.
Let $\cdots \to R^q \xrightarrow[]{\overline{C}}R^p \xrightarrow[]{\overline{B}} R^n \xrightarrow[]{\overline{A}} R^m \to M \to 0$ be a minimal $R$-free resolution of an $R$-module $M$, where $A,B,C,\dots$ are matrices over $S$.
Assume that $J$ satisfies either of the following conditions.\\
\qquad
{\rm(a)} $J\supseteq \I_1(A)+\I_1(C)$.\qquad
{\rm(b)} $J\supseteq \I_1(A)$ and $S/J$ is Gorenstein.\\
If $(I:_S J)\not\subseteq (IJ:_S (J:_S \n)\I_1(A))$, then $S/J$ is a direct summand of $\syz^2_R M$.
\end{prop}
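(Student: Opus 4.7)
My plan is to construct an explicit $R$-linear split monomorphism $f \colon S/J \hookrightarrow \syz^2_R M$.  First, I would unpack the hypothesis: since $\I_1(A)$ is generated by the entries of $A$, there exist $s \in (I :_S J)$, $y \in (J :_S \n)$, and an entry $a_{i_0, j_0}$ of $A$ satisfying $s y a_{i_0, j_0} \notin IJ$.  (A short calculation with $sJ \subseteq I$ and $\I_1(A) \subseteq J$ shows this forces $(J :_S \n) \not\subseteq J$, so $S/J$ has depth zero in both cases.)  Setting $v := s e_{j_0} \in S^n$, the entries of $Av = sA_{j_0}$ lie in $sJ \subseteq I$ (using $\I_1(A) \subseteq J$ and $s \in (I:J)$), whence $\bar v \in \syz^2_R M$; and $Js \subseteq I$ gives $J\bar v = 0$.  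Hence $f \colon S/J \to \syz^2_R M$, $1 \mapsto \bar v$, is a well-defined $R$-linear map.

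The combinatorial core of the proof is the claim that $\bar v$ is a minimal generator of $\syz^2_R M$, i.e., $\bar v \notin \n \syz^2_R M$.  Suppose otherwise, and lift: $v \equiv \sum_l \lambda_l u_l \pmod{I S^n}$ with $\lambda_l \in \n$ and $A u_l \in I S^m$.  Applying $A$ yields $A v \in \n I \cdot S^m + \I_1(A) I \cdot S^m \subseteq \n I \cdot S^m$, so $s a_{i_0, j_0} \in \n I$; multiplying by $y \in (J:\n)$ then forces $s y a_{i_0, j_0} \in y\n \cdot I \subseteq JI = IJ$, a contradiction.  Writing $\bar v = \sum_k r_k \overline B(e_k)$ in terms of the minimal generating set of $\syz^2_R M$ supplied by $\overline B$, some $r_{k_0}$ must therefore be a unit in $R$.

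To split $f$, I would define $\tilde g \colon R^p \to S/J$ by $e_{k_0} \mapsto \overline{r_{k_0}^{-1}} \in (S/J)^\times$ and $e_k \mapsto w_k$ for $k \neq k_0$, then descend through the surjection $\overline B \colon R^p \twoheadrightarrow \syz^2_R M$ to a map $g \colon \syz^2_R M \to S/J$.  The descent requires $\tilde g \circ \overline C = 0$, equivalently $(w_k)_k \in \ker(\overline C^T \otimes_R S/J)$.  In case (a), $\I_1(C) \subseteq J$ annihilates $\overline C$ modulo $J$, so this condition is automatic with $w_k = 0$; the resulting retraction satisfies $g(\bar v) = r_{k_0} \cdot \overline{r_{k_0}^{-1}} = 1$, proving $f$ is split (and incidentally yielding $(I :_S s) = J$).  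In case (b), $S/J$ is artinian Gorenstein, hence self-injective as an $S/J$-module; combined with Lemma \ref{l22} this reduces the splitting to showing that the induced map $\bar f \colon S/J \to \syz^2_R M / J \syz^2_R M$ is injective, which translates via $J \syz^2_R M = \overline B(JR^p)$ to the requirement that $(tr_k)_k \equiv \overline C(z) \pmod{J}$ for some $z \in R^q$ forces $t \in J$.

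The main obstacle is case (b): exploiting the Gorenstein structure of $S/J$ to deduce injectivity of $\bar f$ requires a duality argument on the complex $(S/J)^q \xrightarrow{\overline C \otimes S/J} (S/J)^p$, and is the technical heart of the proof.  Once this is handled, Lemmas \ref{l21} and \ref{l22} assemble the splitting.
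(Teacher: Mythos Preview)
Your construction of $f\colon S/J\to X:=\syz^2_R M$ and the reduction via Lemma~\ref{l22} to $f'\colon S/J\to X/JX$ match the paper exactly, and your explicit retraction in case~(a) is correct. The gap is that you have overestimated the difficulty in case~(b): no duality argument on $(S/J)^q\to(S/J)^p$ is needed, and in fact the paper disposes of both cases simultaneously by a computation essentially identical to the one you already carried out.

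The point is that Lemma~\ref{l21} gives more than ``injective $\Rightarrow$ split'': it says split $\iff$ $f'(\soc S/J)\ne 0$. In case~(a), Lemma~\ref{l23} applied to the presentation $R^q\xrightarrow{\overline C}R^p\to X\to 0$ (with $\I_1(\overline C)\subseteq J/I$) shows $X/JX$ is $S/J$-free, so Lemma~\ref{l21}(b) applies; in case~(b), Lemma~\ref{l21}(a) applies. Either way it suffices to exhibit a nonzero $\bar a\in\soc(S/J)$ with $a\bar v\notin JX$. Your element $y\in(J:_S\n)$ does the job: you already noted $y\notin J$, and if $y\bar v\in JX$ then lifting gives $ys\,e_{j_0}=B\xi+w$ with $\xi\in JS^p$ and $w\in IS^n$; applying $A$ and using $AB\equiv 0\pmod I$ together with $\I_1(A)\subseteq J$ yields $ys\,a_{i_0,j_0}\in IJ$, the desired contradiction. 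This is exactly your computation for $\bar v\notin\m X$, with $y$ multiplied in at the outset instead of at the end---so your ``combinatorial core'' was simply aimed at a slightly wrong target (minimal generation rather than nonvanishing on the socle).
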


\begin{proof}
For each integer $i$, let $J_i$ be the ideal of $S$ generated by the entries of the $i$th column of $A$.
Then $\I_1(A)=J_1+\cdots+J_n$, and $(I:_S J)\not\subseteq (IJ:_S (J:_S \n)\I_1(A))=(IJ:_S (J:_S \n)J_1)\cap\cdots\cap(IJ:_S (J:_S \n)J_n)$.
Hence $(I:_S J)\not\subseteq(IJ:_S (J:_S \n)J_s)$ for some $s$.
Choose an element $u\in(I:_S J)\setminus (IJ:_S (J:_S \n)J_s)$ and let $v\in R^n$ be the image of $u\cdot e_s$, where $e_s$ is the $s$th unit vector of $S^n$.
Since $Ju\subseteq I$ and $\I_1(A)\subseteq J$, $v$ is in $\ker \overline{A}=\syz^2_RM=:X$.
We can define an $R$-homomorphism $f:S/J\to X$ by $f(\overline{1})=v$.

Now we want to show $f$ is split injective.
By Lemma \ref{l22}, it is enough to verify so is the induced map $f'=pf:S/J\to X/\overline{J}X$.
By Lemmas \ref{l21} and \ref{l23}, it suffices to check $f'(\soc S/J)\not=0$.

Since $u\not\in ((IJ):_S (J:_S \n)J_s)$, we can choose an element $a\in(J:_S \n)$ such that $auJ_s\not\subseteq IJ$.
Remark that $a\not\in J$, otherwise one has $au\in I$, which forces $auJ_S$ to be contained in $IJ$.
Let $\overline{a}$ be the image of $a$ in $S/J$.
We have that $0\not=\overline{a}\in \soc S/J$.
If $f'(\overline{a})=0$, then $av\in \overline{J}X$.
Then there exist elements $x\in JR^p$ and $y\in IR^n$ such that $aue_s=Bx+y$.
Observe that $auAe_s=ABx+Ay\in IJR^m$.
So we obtain the inclusion $auJ_s\subseteq IJ$, which is contradiction.
Thus $f'(\overline{a})=0$ and we conclude that $f$ is split injective.
\end{proof}

As a corollary, we have the following restatement of \cite[Lemma 4.1]{KV}.

\begin{cor}\label{10}
Let $(S,\n,k)$ be a local ring and $I$ an ideal of $S$.
Set $R=S/I$ and consider a minimal $R$-free presentation $R^n \xrightarrow{\overline A} R^m \to M \to0$ of an $R$-module $M$, where $A$ is an $m\times n$ matrix over $S$ and $\overline A$ is the corresponding matrix over $R$.
If $(I:_S\n)\nsubseteq(\n I:_S\I_1(A))$, then $k$ is a direct summand of $\syz_R^2M$.
\end{cor}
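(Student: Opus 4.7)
The plan is to derive Corollary \ref{10} as an immediate specialization of Proposition \ref{p1}, taking $J = \n$. The given presentation $R^n \xrightarrow{\overline A} R^m \to M \to 0$ can always be extended to a minimal $R$-free resolution $\cdots \to R^q \xrightarrow{\overline C} R^p \xrightarrow{\overline B} R^n \xrightarrow{\overline A} R^m \to M \to 0$, so Proposition \ref{p1} is applicable.

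First I would verify that hypothesis (b) of Proposition \ref{p1} is satisfied with $J = \n$. Since $S/\n = k$ is a field, it is (trivially) Gorenstein. Moreover, minimality of the presentation forces every entry of $\overline A$ to lie in $\m = \n/I$, which lifts to the containment $\I_1(A) \subseteq \n + I = \n$. Thus $\I_1(A) \subseteq J$ and $S/J$ is Gorenstein, meeting hypothesis (b).

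Next I would simplify the numerical condition appearing in Proposition \ref{p1}. With $J = \n$ one has $(J :_S \n) = (\n :_S \n) = S$, so $(J :_S \n)\,\I_1(A) = \I_1(A)$, and therefore
\[
(IJ :_S (J:_S \n)\,\I_1(A)) = (\n I :_S \I_1(A)).
\]
Consequently the hypothesis $(I :_S J) \not\subseteq (IJ :_S (J:_S \n)\,\I_1(A))$ of Proposition \ref{p1} collapses to $(I :_S \n) \not\subseteq (\n I :_S \I_1(A))$, which is exactly the assumption of the corollary. Proposition \ref{p1} then yields that $S/J = k$ is a direct summand of $\syz^2_R M$, as required.

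There is no real obstacle here; the corollary is a direct specialization of Proposition \ref{p1}. The only substantive verification is the observation that minimality of the presentation ensures $\I_1(A) \subseteq \n$, which is what allows hypothesis (b) to be invoked.
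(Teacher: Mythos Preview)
Your proof is correct and matches the paper's intent: the corollary is stated immediately after Proposition~\ref{p1} with no proof, precisely because it is the specialization $J=\n$ that you carry out. (Incidentally, hypothesis~(a) of Proposition~\ref{p1} also holds in this case since minimality forces $\I_1(C)\subseteq\n$, so either hypothesis suffices.)
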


Recall that a module $M$ over a ring $R$ is called {\em totally reflexive} if the natural map $M\to M^{\ast\ast}$ is an isomorphism and $\Ext_R^i(M,R)=\Ext_R^i(M^\ast,R)=0$ for all $i>0$, where $(-)^\ast=\Hom_R(-,R)$.
Over a Cohen--Macaulay local ring, a totally reflexive module is a maximal Cohen--Macaulay module, and the converse holds as well over a Gorenstein local ring.

Also, recall that a pair $(x,y)$ of elements of a ring $R$ is called an {\em exact pair of zerodivisors} if the equalities $(0:_Rx)=yR$ and $(0:_Ry)=xR$ hold.
This is equivalent to saying that the sequence $\cdots\xrightarrow{x}R\xrightarrow{y}R\xrightarrow{x}R\xrightarrow{y}\cdots$ is exact.
It is easy to see that for each exact pair of zerodivisors $(x,y)$ the $R$-modules $R/xR$ and $R/yR$ are totally reflexive.

The following result is another application of Proposition \ref{p1}.

\begin{cor} \label{c26}
Let $(S,\n,k)$ be a local ring and $I\subseteq J$ ideals of $S$.
Assume that $S/I,S/J$ are Gorenstein and that $(I:_S J)\nsubseteq (IJ):_S ((J:_S \n)J)$.
Then there exist elements $a,b\in S$ such that $J=I+(a)$, $(I:_SJ)=I+(b)$, and $(\overline{a},\overline{b})$ is an exact pair of zerodivisors of $S/I$.
\end{cor}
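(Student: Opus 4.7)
The plan is to apply Proposition~\ref{p1} to $M = S/J$ viewed as a module over $R = S/I$, use this to extract principal generators of $K := J/I$ and $L := (I:_SJ)/I$, and then read off the exact-pair identities from Gorenstein duality. As a preliminary observation, the hypothesis forces $R$ to be Artinian Gorenstein. Indeed if $\depth S/J > 0$ then $(J:_S\n) = J$, making $(IJ:_S(J:_S\n)J) = (IJ:_SJ^2) \supseteq (I:_SJ)$ automatically; and if $\dim R = \dim S/I > 0$, then $K$ is $\m$-primary in the positive-depth ring $R$, so contains a regular element, forcing $L = 0$ and hence $(I:_SJ) = I \subseteq (IJ:_S(J:_S\n)J)$. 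Both outcomes contradict the hypothesis, so $R$ is Artinian Gorenstein; Matlis self-duality in $R$ then provides the bijection $X \mapsto (0:_RX)$ on ideals with $(0:_R(0:_RX)) = X$ and $\ell_R(0:_RX) = \ell_R(R/X)$.

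To apply Proposition~\ref{p1}, I take a free presentation $R^n \xrightarrow{\bar A} R \to M \to 0$ (possibly non-minimal) and lift the entries of $A$ to $S$ so that $\I_1(A) = J$; this can be arranged by including among the columns both lifts of minimal generators of $K$ and additional generators of $I$. Condition~(b) of Proposition~\ref{p1} then applies, and its technical inclusion becomes exactly the hypothesis of the corollary. The conclusion is that $M$ is a direct summand of the (non-minimal) $\syz^2_R M$; since $M = S/J = R/K$ is indecomposable and non-free, a Krull--Schmidt argument transfers the summand property to the minimal second syzygy. The summand is realized by an embedding $\bar 1 \mapsto \bar u e_s \in R^n$ with $u \in (I:_SJ)$ and some index $s$, whence $R\bar u e_s \cong R/K$ yields $\ann_R\bar u = K$ and $\bar u \in L$. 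The length equality $\ell_R(R\bar u) = \ell_R(R/K) = \ell_R(L)$ then forces $R\bar u = L$, so $L$ is principal with $(I:_SJ) = I + (u)$; set $b := u$.

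The remaining and hardest step is showing $K$ is itself principal. By the Matlis identity $\mu_R(K) = r(R/L)$ in Artinian Gorenstein local rings, this amounts to showing $S/(I:_SJ) = R/L$ is Gorenstein, which is strictly stronger than the principality of $L$ established above: in the Gorenstein ring $R = k[x,y]/(x^3,y^3)$, taking $K = (\bar x,\bar y^2)$ gives a principal annihilator $L = (\bar x^2\bar y)$ yet $\mu_R(K) = 2$, and one checks that in this situation the corollary's hypothesis fails. To obtain $K$ principal I would exploit either (a) the fact that the summand $R\bar u e_s$ sits in a single coordinate of $R^n$, combined with the retraction $\syz^2_R M \to R/K$ and the coordinate projections $R^n \to R$, to impose rigid constraints on the columns of $\bar A$; or (b) the symmetry afforded by the Matlis identity $(I:_S(I:_SJ)) = J$, repeating the second paragraph with $J$ and $(I:_SJ)$ swapped to produce a principal generator $a$ of $K$. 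Once $K = R\bar a$, so $J = I + (a)$, the computation $(0:_R\bar a) = L = (\bar b)$ and $(0:_R\bar b) = \ann_R\bar u = K = (\bar a)$ (both via Matlis duality in $R$) shows that $(\bar a, \bar b)$ is an exact pair of zerodivisors of $S/I$.
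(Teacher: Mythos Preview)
Your argument correctly establishes that $L=(I:_SJ)/I$ is principal, but the step you flag as ``the remaining and hardest'' is a genuine gap: neither of your sketched approaches (a) or (b) is carried out, and (b) in particular would require verifying the corollary's hypothesis with $J$ and $(I:_SJ)$ interchanged, which is not symmetric in any obvious way. In fact, running your own machinery on $M'=R/L$ only reproduces the principality of $(0:_RK)=L$, which you already have.

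The paper closes this gap with a single observation you do not use: since $R=S/I$ is Artinian Gorenstein, the syzygy functor $\Omega$ is a self-equivalence of the stable module category, so $\syz^2_R(S/J)$ is indecomposable (as $S/J$ is). Combined with $S/J$ being a direct summand, this forces $\syz^2_R(S/J)\cong S/J$. One then reads off the four-term exact sequence
\[
0 \to S/J \to R^n \to R \to S/J \to 0,
\]
and a length count gives $n=1$, so $K=J/I$ is principal; the rest follows exactly as in your last paragraph. This avoids entirely the need to extract generators from the internal structure of the proof of Proposition~\ref{p1}. A minor technical point: Proposition~\ref{p1} is stated for a \emph{minimal} free resolution, so applying it to your augmented (non-minimal) presentation requires a word of justification---the proof under condition (b) does go through verbatim, but you should say so.
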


\begin{proof}
Put $R=S/I$.
Consider a minimal $R$-free resolution $\cdots \to R^n\xrightarrow[]{\overline{A}} R \to S/J \to 0$ of the $R$-module $S/J$.
Clearly, the equality $\I_1(A)+I=J$ holds.
We can derive from Proposition \ref{p1} that the $R$-module $\syz_R^2 (S/J)$ has a direct summand isomorphic to $S/J$.
Since $R$ is Gorenstein and the $R$-module $S/J$ is indecomposable, $\syz_R^2 (S/J)$ is also indecomposable.
This implies that $\syz_R^2 (S/J)\cong S/J$, that is, the sequence $0 \to S/J \to R^n \to R \to S/J \to 0$ is exact.
We have $\ell(R^n)+\ell(S/J)=\ell(R)+\ell(S/J)$, which yields $n=1$.
Thus the ideal $J/I$ of $R$ is principal, and we find $a\in R$ with $J/I=aR$.
As $(0:_Ra)=\syz^1_R (J/I)\cong S/J$, the ideal $(0:_Ra)$ of $R$ is also principal.
Taking a generator $b$ of $(0:_Ra)$, we get an exact pair of zerodivisors $(a,b)$ of $R$.
\end{proof}


\section{Proof of Theorem \ref{210} and some applications}\label{main1Sec}

This section concerns with a surprising characterization of Burch rings of depth zero below, and some applications. 

\begin{thm} \label{210}
Let $(R,\m,k)$ be a local ring that is not a field.
Then $R$ is a Burch ring of depth zero if and only if $k$ is isomorphic to a direct summand of its second syzygy $\syz^2_R k$. 
\end{thm}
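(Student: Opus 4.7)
The plan is as follows. First I would reduce to the case where $R$ is complete: both the property of being a Burch ring of depth zero (by Lemma \ref{l310}) and the summand property $k\mid\syz_R^2 k$ are preserved by $\m$-adic completion---the latter because $\widehat R$ is faithfully flat over $R$, the minimal free resolution of $k$ over $R$ completes to the minimal one over $\widehat R$, and splittability of a short exact sequence is governed by $\Ext^1$, which commutes with flat base change. I then fix a minimal Cohen presentation $R=S/I$ with $(S,\n,k)$ regular local of embedding dimension $e=\edim R$ and $I\subseteq\n^2$; since $R$ is not a field, $I\neq\n$. Letting $\xx=x_1,\dots,x_e$ minimally generate $\n$, and recalling that $\k^S$ is a minimal $S$-free resolution of $k$, I identify $\syz_R^2 k$ with the first Koszul cycle module $Z_1:=Z_1(\xx;R)=\ker(R^e\to R,\ e_i\mapsto\overline{x_i})$. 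A short computation gives $\soc_R Z_1=(\soc R)^e$: if $z=(r_1,\dots,r_e)\in Z_1$ satisfies $\m z=0$ then each $r_i\in\soc R$, and conversely any tuple of socle elements automatically satisfies $\sum r_i x_i=0$. Running the same argument as in the proof of Proposition \ref{p1} via Lemmas \ref{l21}--\ref{l23}, ``$k$ is a direct summand of $Z_1$'' is equivalent to ``$(\soc R)^e\not\subseteq\m Z_1$''. Note also that the summand condition forces $\depth R=0$, since over a ring of positive depth every positive syzygy has positive depth.

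For the ``only if'' direction I would simply apply Corollary \ref{10} to $M=k$ with the minimal presentation $R^e\xrightarrow{(\overline{x_1},\dots,\overline{x_e})}R\to k\to 0$. Here $A=(x_1,\dots,x_e)$ has $\I_1(A)=\n$, so the hypothesis $(I:_S\n)\not\subseteq(\n I:_S\n)$ of that corollary is exactly the Burch condition $\n(I:\n)\neq\n I$ (which by Lemma \ref{l310} is equivalent to $R$ being Burch of depth zero), and the conclusion is precisely that $k$ is a direct summand of $\syz_R^2 k$.

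For the ``if'' direction I argue the contrapositive: assuming $\n(I:_S\n)=\n I$, I show $(\soc R)^e\subseteq\m Z_1$. The crucial preparatory observation is $(I:_S\n)\cap\n\subseteq\n^2$: indeed, any $u\in(I:_S\n)\cap\n$ satisfies $u x_j\in\n(I:\n)=\n I\subseteq\n^3$ for every $j$, so viewing the image of $u$ in $\n/\n^2$ as a linear form in $\gr_\n S\cong k[X_1,\dots,X_e]$ that is annihilated by each $X_j$, it must vanish; that is, $u\in\n^2$. Consequently any lift to $\n$ of an element of $\soc R$ already lies in $\n^2$.

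Given $s\in\soc R$ and an index $i$, it now suffices to show $se_i\in\m Z_1$, since such elements generate $(\soc R)^e$. Lift $s$ to $\tilde s\in\n^2\cap(I:_S\n)$. Since $\tilde sx_i\in\n(I:_S\n)=\n I$, write $\tilde sx_i=\sum_\alpha c_\alpha f_\alpha$ with $c_\alpha\in\n$ and each $f_\alpha\in\n^2$ a minimal generator of $I$; pick writings $f_\alpha=\sum_jg_{j\alpha}x_j$ with $g_{j\alpha}\in\n$ (possible since $f_\alpha\in\n^2$), and set $\tilde w_\alpha:=(g_{j\alpha})_j\in S^e$. The element $\tilde se_i-\sum_\alpha c_\alpha\tilde w_\alpha$ maps to $0$ in $S$, so it lies in $Z_1(\xx;S)$, which is freely $S$-spanned by the Koszul relations $K_{pq}:=x_pe_q-x_qe_p$ for $p<q$; write
\[
\tilde se_i-\sum_\alpha c_\alpha\tilde w_\alpha=\sum_{p<q}c_{pq}K_{pq}\quad\text{in }S^e.
\]
Comparing $l$-th coordinates modulo $\n^2$, each summand on the left lies in $\n^2$ (since $\tilde s\in\n^2$ and $c_\alpha g_{l\alpha}\in\n^2$), while the $l$-th coordinate of $\sum_{p<q}c_{pq}K_{pq}$ reduces to a $k$-linear combination of distinct basis elements $\overline{x_p}\in\n/\n^2$; this forces every $c_{pq}\in\n$. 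Reducing modulo $I$ then yields $se_i=\sum_\alpha\overline{c_\alpha}\,\overline{\tilde w_\alpha}+\sum_{p<q}\overline{c_{pq}}\,\overline{K_{pq}}\in\m Z_1$, completing the contrapositive. The main obstacle is this closing argument: combining the associated-graded bound $(I:_S\n)\cap\n\subseteq\n^2$ with the coordinate-by-coordinate analysis modulo $\n^2$ of the Koszul decomposition is exactly where the asymmetry between $\n I$ and $\n(I:\n)$ enters in an essential way, and making the bookkeeping tight is the most delicate step of the proof.
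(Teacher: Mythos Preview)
Your argument is correct (with one harmless slip: the Koszul relations $K_{pq}$ generate $Z_1(\xx;S)$ but do not \emph{freely} span it for $e\ge 3$; your coordinate analysis only uses that they generate, so nothing is lost). The ``only if'' direction is the same as the paper's---both apply Corollary \ref{10} (the paper packages this as Proposition \ref{203}).

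The ``if'' direction, however, differs from the paper's in a genuine way. The paper first splits off the case $(I:_S\n)\not\subseteq\n^2$ and handles it via a fibre-product argument. In the remaining case it builds a snake-lemma diagram yielding an exact sequence $\syz_S^2k\to\syz_R^2k\xrightarrow{\delta}I/\n I\to 0$, shows that socle elements of $\syz_R^2k$ lie in $\ker\delta$, lifts them to $\syz_S^2k\cap\n^2S^e$, and then invokes a result of Herzog--Simis--Vasconcelos to conclude this intersection lies in $\n\,\syz_S^2k$. Your route is more direct and self-contained: the associated-graded computation $(I:_S\n)\cap\n\subseteq\n^2$ shows that the non-Burch hypothesis already rules out the paper's first case, so no separate fibre-product argument is needed; and your coordinate-by-coordinate analysis of $\sum c_{pq}K_{pq}$ is an elementary inline proof of exactly the special case $Z_1(\xx;S)\cap\n^2S^e\subseteq\n\,Z_1(\xx;S)$ that the paper imports. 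The paper's approach is more structural and makes the connection to the exact sequence $\syz_S^2k\to\syz_R^2k\to I/\n I\to 0$ explicit; yours is shorter, avoids outside citations, and unifies the two cases.
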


We shall delay the proof until the end of this section. First, note that we can interpret Corollary \ref{10} in terms of Burch rings as follows.
Here we use the notation $\I_1(M)$ for an $R$-module $M$ to be the ideal $\I_1(A)$ where $A$ is a matrix in a minimal free presentation $F\xrightarrow[]{A} G\to M \to 0$ of $M$.
Remark that $\I_1(M)$ is independent of the choice of $A$ (see \cite[page 21]{BH} for instance).

\begin{prop}\label{203}
Let $(R,\m,k)$ be a Burch ring of depth zero that is not a field.
Let $M$ be an $R$-module with $\I_1(M)=\m$.
Then $k$ is a direct summand of $\syz_R^2M$.
In particular, $k$ is a direct summand of $\syz_R^2k$.
\end{prop}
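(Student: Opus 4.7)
The plan is to deduce the proposition from Corollary \ref{10} via a careful Cohen presentation of $R$. Since completion is faithfully flat, commutes with minimal free resolutions, and preserves both socles and cosocles, one reduces immediately to the case in which $R$ is $\m$-adically complete: the module $\syz_R^2M$ and the ideal $\I_1(M)$ behave well under completion, and ``$k$ is a direct summand'' passes in both directions between $R$ and $\widehat{R}$.

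Assume $R$ is complete. By Cohen's structure theorem, choose a surjection $\pi:S\to R$ from a complete regular local ring $(S,\n,k)$ with $\dim S=\edim R$, and set $I=\ker\pi$; this minimal choice forces $I\subseteq\n^2$. Since $R$ is not a field, $I\neq\n$, so the last assertion of Lemma \ref{l310} guarantees that $I$ is a Burch ideal of $S$. Let $R^n\xrightarrow{\overline{A}}R^m\to M\to 0$ be a minimal free presentation of $M$, and lift $\overline{A}$ to a matrix $A$ over $S$ with entries in $\n$. The hypothesis $\I_1(M)=\m$ reads as $\I_1(A)+I=\n$; combined with $I\subseteq\n^2$, Nakayama's lemma upgrades this to the crucial equality $\I_1(A)=\n$.

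With this in hand, Corollary \ref{10} applies as soon as $(I:_S\n)\not\subseteq(\n I:_S\I_1(A))=(\n I:_S\n)$. But this is exactly the Burch condition on $I$: by Proposition \ref{23}(2), Burchness of $I$ is equivalent to $(I:_S\n)\neq(\n I:_S\n)$, and the reverse inclusion $(\n I:_S\n)\subseteq(I:_S\n)$ is automatic. Hence $k$ is a direct summand of $\syz_R^2M$. The ``in particular'' assertion is then immediate: a minimal free presentation of $k$ has matrix whose entries form a minimal generating set of $\m$, so $\I_1(k)=\m$.

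The main obstacle I anticipate is producing the equality $\I_1(A)=\n$ on the nose: merely $\I_1(A)+I=\n$ would not suffice, and this is why the minimal choice $\dim S=\edim R$ (equivalently $I\subseteq\n^2$) is essential. An arbitrary regular presentation of $R$ would leave $\I_1(A)$ possibly strictly smaller than $\n$, and the Burch hypothesis on $I$ would not translate directly into the premise of Corollary \ref{10}.
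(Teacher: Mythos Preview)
Your proof is correct and follows essentially the same route as the paper: reduce to the complete case, take a minimal Cohen presentation $R\cong S/I$ with $I\subseteq\n^2$, use Burchness of $I$ (via Lemma~\ref{l310}) together with $\I_1(A)=\n$ to invoke Corollary~\ref{10}. Your write-up is slightly more explicit about why $\I_1(A)=\n$ (Nakayama) and why $(I:\n)\not\subseteq(\n I:\n)$ (Proposition~\ref{23}(2)), but the argument is the same.
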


\begin{proof}
By \cite[Corollary 1.15]{LW}, the module $\syz_R^2M$ contains $k$ as a direct summand if and only if so does $\syz_R^2M\otimes_R \widehat{R}\cong\syz_{\widehat{R}}^2(M\otimes_R\widehat{R})$.
Hence we may assume that $R$ is complete, and then there is a regular local ring $(S,\n)$ and a Burch ideal $I\subset \n^2$ such that $R\cong S/I$.
Consider a minimal $R$-free presentation $R^n \xrightarrow{\overline A} R^m \to M \to0$ of an $R$-module $M$, where $A$ is a matrix over $S$ and $\overline A$ is $A$ modulo $I$.
Then we see that $\I_1(\overline A)=\I_1(M)=\m$, which implies that $\I_1(A)=\n$.
Hence $(I:_S\n)\not\subseteq (\n I:_S\I_1(A))$, and thus $k$ is a direct summand of $\syz_R^2M$ by Corollary \ref{10}.
\end{proof}

Here is an immediate consequence of the above proposition.

\begin{cor}
Let $(R,\m,k)$ be an artinian Burch ring.
Then there exists an element $x\in\m\setminus\m^2$ such that $k$ is a direct summand of the ideal $(0:_Rx)$ of $R$.
\end{cor}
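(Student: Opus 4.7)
The plan is to revisit the proof of Proposition~\ref{203} and extract the \emph{specific} form of the split injection $k\hookrightarrow\syz^2_R k$ it produces. That proof reduces (via Corollary~\ref{10}) to Proposition~\ref{p1}, whose construction sends $\bar 1$ to an element of the very restricted form $\bar u\,e_s\in R^n$, where $e_s$ is a standard basis vector and $\bar u\in\soc R$. With such a single-coordinate summand in hand, the element $x=\bar y_s\in\m\setminus\m^2$ (the $s$-th minimal generator of $\m$) is forced: multiplication by $e_s$ gives a natural map $(0:_R x)\to\syz^2_R k$, and composing with the retraction of the split injection will exhibit $\bar u$ as splitting off a copy of $k$ inside $(0:_R x)$.

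Concretely, I would first note that $R=\widehat R$ since $R$ is artinian, and pick a minimal Cohen presentation $R\cong S/I$ with $(S,\n)$ regular local and $I\subseteq\n^2$ a Burch ideal via Lemma~\ref{l310}, fixing minimal generators $y_1,\ldots,y_n$ of $\n$ whose images minimally generate $\m$. Applying Proposition~\ref{p1} to $M=k$ with presenting matrix $A=(y_1,\ldots,y_n)$ (so that each column ideal $J_i$ is $(y_i)$), I would choose $u\in(I:_S\n)$ and an index $s$ with $uy_s\notin\n I$; this produces the split monomorphism $f\colon k\to\syz^2_R k$, $\bar 1\mapsto\bar u\,e_s$, with some retraction $\rho\colon\syz^2_R k\to k$. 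Three facts are then immediate: $\bar u\in\soc R$ because $u\n\subseteq I$, $\bar u\neq 0$ because $u\in I$ would force $uy_s\in I\n$, and $\bar u\,\bar y_s=0$ in $R$. Setting $x=\bar y_s\in\m\setminus\m^2$, I would define $\psi\colon k\to(0:_R x)$ by $\bar 1\mapsto\bar u$ (well-defined by the three facts above) and $\phi\colon(0:_R x)\to\syz^2_R k$ by $r\mapsto r\,e_s$ (landing in $\ker A$ since $r\bar y_s=0$), and verify the composition chain $\bar 1\xmapsto{\psi}\bar u\xmapsto{\phi}\bar u\,e_s\xmapsto{\rho}\bar 1$, so that $\rho\circ\phi\circ\psi=\id_k$. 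Hence $\psi$ splits and $k$ is a direct summand of $(0:_R x)$.

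The main obstacle is that the statement of Proposition~\ref{203} only provides an abstract summand $k\subseteq\syz^2_R k$; an arbitrary split embedding $\bar 1\mapsto(v_1,\ldots,v_n)$ with several nonzero coordinates would not factor through any single ideal $(0:_R\bar y_s)$ via the naive map $r\mapsto r e_s$. The real content is therefore a careful bookkeeping inside the proof of Proposition~\ref{p1} to record that the summand it constructs is concentrated in a single coordinate $e_s$. Once that concentration is in hand, the index $s$ together with the socle element $\bar u$ simultaneously dictates both the choice of $x=\bar y_s$ and the splitting maps $\psi$ and $\phi$, and the verification is purely mechanical.
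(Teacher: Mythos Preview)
Your proof is correct but takes a genuinely different route from the paper. The paper applies Proposition~\ref{203} not to $k$ but to the auxiliary module $M=\bigoplus_{i=1}^n R/(x_i)$, where $x_1,\dots,x_n$ minimally generate $\m$: the diagonal presentation matrix $\partial=\mathrm{diag}(x_1,\dots,x_n)$ has $\I_1(\partial)=\m$, and $\syz^2_R M=\bigoplus_i(0:_R x_i)$, so Proposition~\ref{203} yields $k$ as a summand of $\bigoplus_i(0:_R x_i)$; since an artinian ring is henselian, Krull--Schmidt then places $k$ inside a single $(0:_R x_i)$. By contrast, you reopen the construction in Proposition~\ref{p1} for $M=k$ and exploit that the split injection it produces is concentrated in a single coordinate $e_s$, which lets you factor through $(0:_R\bar y_s)$ by hand. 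The paper's argument is shorter and treats Proposition~\ref{203} as a black box, at the cost of invoking Krull--Schmidt; your argument is more constructive---it actually names the element $x$ and the splitting---and avoids Krull--Schmidt entirely, at the cost of unwinding the machinery behind Proposition~\ref{203}.
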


\begin{proof}
Let $x_1,\dots,x_n$ be a minimal system of generators of $\m$.
There is an exact sequence
$$
0 \to \bigoplus_{i=1}^n(0:x_i) \to R^n \xrightarrow{\partial} R^n \to \bigoplus_{i=1}^nR/(x_i) \to 0\quad\text{with}\quad\partial=\left(\begin{smallmatrix}x_1&&&\\&x_2&&\\&&\dots&\\&&&x_n\end{smallmatrix}\right).
$$
This shows $\I_1(\partial)=\m$ and $\syz^2(\bigoplus_{i=1}^nR/(x_i))=\bigoplus_{i=1}^n(0:x_i)$.
Proposition \ref{203} implies that $k$ is a direct summand of $\bigoplus_{i=1}^n(0:x_i)$.
Since $R$ is artinian, it is henselian.
The Krull--Schmidt theorem shows that $k$ is a direct summand of $(0:x_i)$ for some $i$.
\end{proof}

The following theorem classifies $\m$-primary Gorenstein Burch ideals.

\begin{thm}\label{r9}
Let $(R,\m)$ be a local ring and $I$ an $\m$-primary ideal.
The following are equivalent.\\
{\rm(1)} $I$ is a Burch ideal of $R$ and $R/I$ is Gorenstein.\\
{\rm(2)} $I$ is weakly $\m$-full and $R/I$ is Gorenstein.\qquad
{\rm(3)} $I$ is $\m$-full and $R/I$ is Gorenstein.\\
{\rm(4)} $I=(x_1^r,x_2,\dots,x_n)$ with $x_1,\dots,x_n$ a minimal system of generators of $\m$ and $n,r>0$.
\end{thm}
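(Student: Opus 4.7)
The plan is to establish the equivalences cyclically via $(3)\Rightarrow(2)\Rightarrow(1)\Rightarrow(4)\Rightarrow(3)$. The first two implications are essentially formal: $\m$-fullness yields $(\m I:\m)\subseteq(\m I:x)=I$ and hence weak $\m$-fullness, while weak $\m$-fullness together with $\depth R/I=0$ (automatic since $R/I$ is artinian Gorenstein) forces the Burch property by Corollary \ref{2}.

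The heart of the argument is $(1)\Rightarrow(4)$, which I would derive from Corollary \ref{c26} applied to the pair $I\subseteq J:=\m$. Both $R/I$ and $R/\m=k$ are Gorenstein, and the non-containment $(I:\m)\nsubseteq(\m I):((\m:\m)\m)=(\m I:\m)$ is exactly the Burch condition via Proposition \ref{23}. The corollary then produces elements $a,b\in R$ with $\m=I+(a)$ and $(I:\m)=I+(b)$ such that $(\bar a,\bar b)$ is an exact pair of zero-divisors in $R/I$. In particular $\m/I$ is principal, generated by $\bar a$, so the artinian ring $R/I$ has Loewy length equal to its length $r$, with $\bar a^r=0$ and $\bar a^{r-1}\neq 0$. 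To extract the form of $I$, set $x_1:=a$; when $r\geq 2$ this gives $x_1\notin\m^2$, so $x_1$ extends to a minimal generating set $x_1,y_2,\dots,y_n$ of $\m$, and replacing each $y_i$ ($i\geq 2$) by $x_i:=y_i-c_ix_1\in I$ (where $\bar y_i=\bar c_i\bar x_1$ in $R/I$) preserves the minimality. Since $\bar x_1^r=0$ we get $(x_1^r,x_2,\dots,x_n)\subseteq I$, and a Loewy-length comparison (using $x_1^{r-1}\notin(x_2,\dots,x_n)$, forced by $\bar x_1^{r-1}\neq 0$) gives equality. The case $r=1$ gives $I=\m$ directly.

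Finally, for $(4)\Rightarrow(3)$, the Gorenstein-ness of $R/I$ is immediate from principal maximal ideal plus artinianness (socle is generated by $\bar x_1^{r-1}$). For $\m$-fullness, I would take $x=x_1$ and verify $(\m I:x_1)=I$ using the identity $\m I=x_1I+(x_2,\dots,x_n)^2$. Given $zx_1\in\m I$, write $zx_1=x_1a+b$ with $a\in I$, $b\in(x_2,\dots,x_n)^2$, so $(z-a)x_1\in(x_2,\dots,x_n)^2$. Passing to $\bar R:=R/(x_2,\dots,x_n)$, a local ring with principal maximal ideal $(\bar x_1)$, Cohen's structure theorem (after completion) says $\bar R$ is either a DVR or artinian of the form $k[t]/(t^q)$: in the first case $\bar x_1$ is a nonzerodivisor, so $\overline{(z-a)}\bar x_1=0$ forces $z-a\in(x_2,\dots,x_n)\subseteq I$; in the second, the interpretation of (4) that $x_1^r,x_2,\dots,x_n$ minimally generate $I$ (equivalently $x_1^r\notin(x_2,\dots,x_n)$) forces $q\geq r+1$, and then $\overline{z-a}\in(\bar x_1^{q-1})$ lifts to $z-a\in(x_1^{q-1})+(x_2,\dots,x_n)\subseteq(x_1^r)+(x_2,\dots,x_n)=I$. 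Either way $z\in I$. The main technical subtlety is to interpret (4) as asserting a minimal generating set of $I$, as otherwise the artinian-case argument in $\bar R$ collapses.
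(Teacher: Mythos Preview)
Your $(3)\Rightarrow(2)\Rightarrow(1)$ matches the paper. For $(1)\Rightarrow(4)$ you take a genuinely different route: the paper uses Proposition~\ref{203} to get $k$ as a summand of $\syz^2_{R/I}k$, then invokes indecomposability of syzygies of $k$ over the artinian Gorenstein ring $R/I$ to force $k\cong\syz^2_{R/I}k$, hence $R/I$ is a hypersurface and $\m/I$ is principal; you instead apply Corollary~\ref{c26} with $J=\m$, whose internal length comparison already delivers $\m=I+(a)$. Both arguments ultimately rest on Proposition~\ref{p1}, so the difference is one of packaging---your version trades the indecomposability step for the exact-pair machinery encapsulated in Corollary~\ref{c26}, and is arguably more direct.

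For $(4)\Rightarrow(3)$ the paper simply cites \cite[Proposition~(2.4)]{GH}, while you argue directly. Two remarks. First, the claim that $\bar R$ is ``a DVR or $k[t]/(t^q)$'' is inaccurate in mixed characteristic (think $\Z/p^2\Z$), but all you actually use is that an artinian local ring with principal maximal ideal $(\bar x_1)$ and $\bar x_1^{\,q}=0\ne\bar x_1^{\,q-1}$ satisfies $(0:\bar x_1)=(\bar x_1^{\,q-1})$, which follows from a length count and needs no structure theorem. Second, your flag about reading $x_1^r,x_2,\dots,x_n$ as a \emph{minimal} generating set of $I$ is well taken: without $x_1^r\notin(x_2,\dots,x_n)$ the implication $(4)\Rightarrow(3)$ genuinely fails---take $R=k[\![x,y]\!]/(x^2)$ and $I=(y)=(x^2,y)$, where $R/I\cong k[x]/(x^2)$ is Gorenstein yet $x\in(\m I:w)\setminus I$ for every $w\in\m$, so $I$ is neither $\m$-full nor Burch. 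So your caveat is not a defect of your argument but a necessary reading of condition~(4).
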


\begin{proof}
It follows from \cite[Proposition (2.4)]{GH} that (3) is equivalent to (4), while it is obvious that (3) implies (2) and (2) implies (1).
Assume (1) to deduce (4).
Remark \ref{r312} shows that $R/I$ is a Burch ring.
Proposition \ref{203} implies that $k$ is a direct summand of $\syz_{R/I}^2k$.
As $\syz_{R/I}^2k$ is indecomposable, we get $k\cong\syz_{R/I}^2k$, whence $R/I$ is a hypersurface.
Thus $\m/I$ is cyclic.
Choose an element $x_1\in\m$ such that $\overline{x_1}$ is a minimal generator of $\m/I$.
Then $x_1$ is a minimal generator of $\m$, and $\m=I+(x_1)$.
There is a unique integer $r>0$ with $x_1^r\in I$ and $x_1^{r-1}\notin I$.
Choose $x_2,\dots,x_n\in I$ so that $\overline{x_2},\dots,\overline{x_n}$ is a minimal system of generators of $I(R/(x_1))=\m/(x_1)$.
We see that $x_1,x_2,\dots,x_n$ is a minimal system of generators of $\m$.
Clearly, $I$ contains $J:=(x_2,\dots,x_n)$.
Note that every $\m/J$-primary ideal is a power of $\m/J=((x_1)+J)/J$.
As $x_1^r\in I$ and $x_1^{r-1}\notin I$, we get $I/J=((x_1^r)+J)/J$.
This shows $I=(x_1^r,x_2,\dots,x_n)$.
\end{proof}

We now characterize the modules over a Burch ring having the residue field as a direct summand of some high syzygy.

\begin{thm}\label{11}
Let $(R,\m,k)$ be a Burch local ring of depth zero which is not a field.
Let $M$ be an $R$-module.
Take a minimal free resolution $(F,\partial)$ of $M$.
The following are equivalent.\\
\quad
{\rm(1)} One has $\sum_{i>0}\I_1(\partial_i)=\m$.\qquad
{\rm(2)} $k$ is a direct summand of $\syz_R^rM$ for some $r\ge2$.\\
In particular, if $\sum_{i>0}\I_1(\partial_i)=\m$, then there exists an integer $i\ge3$ such that $\I_1(\partial_i)=\m$.
\end{thm}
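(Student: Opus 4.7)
The plan is to prove the two implications separately, using Proposition \ref{203} as the main engine for the harder direction (1)$\Rightarrow$(2), together with a direct-sum trick that bridges the gap between the sum appearing in (1) and the single-module hypothesis that Proposition \ref{203} requires.

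I would first treat (2)$\Rightarrow$(1). Suppose $\syz_R^r M\cong k\oplus X$ for some $r\ge2$ and some $R$-module $X$. Since a minimal free resolution of a direct sum is (isomorphic to) the direct sum of minimal free resolutions of the summands, the tail $\cdots\to F_{r+2}\xrightarrow{\partial_{r+2}} F_{r+1}\xrightarrow{\partial_{r+1}} F_r$ of the resolution of $M$ can be put in block-diagonal form, one block of which is the first differential in a minimal free resolution of $k$. The entries of that block form a minimal system of generators of $\m$, so $\I_1(\partial_{r+1})=\m$. This at once yields (1), and moreover produces an index $i=r+1\ge3$ with $\I_1(\partial_i)=\m$; this will give the ``in particular'' clause once (1)$\Rightarrow$(2) is established.

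For (1)$\Rightarrow$(2), finite generation of $\m$ allows me to choose $n$ so that $\sum_{i=1}^n\I_1(\partial_i)=\m$. I would then form
\[
N=\bigoplus_{i=1}^n\syz_R^{i-1}M,
\]
with the convention $\syz_R^0M=M$. The direct sum of the minimal free resolutions of the summands is still a minimal free resolution of $N$, and its first differential is block-diagonal with blocks $\partial_1,\dots,\partial_n$; hence
\[
\I_1(N)=\sum_{i=1}^n\I_1(\partial_i)=\m.
\]
Applying Proposition \ref{203} to $N$ gives that $k$ is a direct summand of $\syz_R^2 N=\bigoplus_{i=1}^n\syz_R^{i+1}M$.

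The final step is to push $k$ onto a single summand, and this uses only that $\operatorname{End}_R(k)=k$ is a field: if $\sigma\colon k\to\bigoplus_jA_j$ and $\tau\colon\bigoplus_jA_j\to k$ satisfy $\tau\sigma=\id_k$, then writing $\sigma=(\sigma_j)$ and $\tau=(\tau_j)$ gives $\sum_j\tau_j\sigma_j=1$ in $k$, so some $\tau_j\sigma_j$ is a unit in $k$, whence $(\tau_j\sigma_j)^{-1}\tau_j$ is a retraction of $\sigma_j$. Applied to $A_j=\syz_R^{j+1}M$ this shows $k$ is a direct summand of $\syz_R^rM$ for some $r$ with $2\le r\le n+1$, which is (2). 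The ``in particular'' claim then follows by feeding this $r$ back into the (2)$\Rightarrow$(1) argument to obtain $i=r+1\ge3$ with $\I_1(\partial_i)=\m$. The only points requiring care are that the direct sum of minimal free resolutions remains minimal and that $\I_1$ is additive over such direct sums; both are routine, so I do not anticipate a real obstacle.
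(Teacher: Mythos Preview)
Your argument is correct, and the (2)$\Rightarrow$(1) direction is essentially identical to the paper's. For (1)$\Rightarrow$(2), however, you take a genuinely different route. The paper passes to the completion, lifts each $\partial_i$ to a matrix $d_i$ over a regular local ring $(S,\n)$ with $R\cong S/I$, and uses the Burch condition $(I:\n)\nsubseteq(\n I:\n)$ together with the decomposition $(\n I:\n)=\bigcap_i(\n I:\I_1(d_i))\cap(\n I:I)$ to locate a single index $t$ with $(I:\n)\nsubseteq(\n I:\I_1(d_t))$; Corollary~\ref{10} then gives $k$ as a summand of $\syz_R^{t+1}M$ directly. You instead package the syzygies into an auxiliary module $N=\bigoplus_{i=1}^n\syz_R^{i-1}M$ with $\I_1(N)=\m$, invoke Proposition~\ref{203} as a black box, and then use the local-endomorphism-ring property of $k$ to extract a single summand. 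Your approach is more modular---it reuses Proposition~\ref{203} without reopening its proof and avoids any explicit passage to the completion or manipulation of colon ideals---while the paper's approach is more direct and stays at the level of a single syzygy throughout. Both yield the same range $2\le r\le n+1$.
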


\begin{proof}
(2) $\Rightarrow$ (1):
The minimal presentation matrix $A$ of $\syz_R^rM$ is equivalent to $\left(\begin{smallmatrix}
B&0\\
0&C
\end{smallmatrix}\right)$, where $B$ and $C$ are the minimal presentation matrices of $k$ and $N$, respectively.
Hence $\I_1(\partial_{r+1})=\I_1(A)=\I_1(B)+\I_1(C)=\m+\I_1(C)=\m$, which shows $\sum_{i>0}\I_1(\partial_i)=\m$.

(1) $\Rightarrow$ (2):
We may assume that $R$ is complete, and hence there is a regular local ring $(S,\n)$ and a Burch ideal $I\subseteq S$ with $R\cong S/I$.
For each $i>0$ we identify $\partial_i$ with a matrix over $R$, and let $d_i$ be a matrix over $S$ lifting $\partial_i$.
Then $\n=\sum_{i>0}\I_1(d_i)+I$.
The noetherian property shows $\n=\I_1(d_1)+\cdots+\I_1(d_n)+I$ for some $n>0$.
Hence $(\n I:\n)=(\n I:\I_1(d_1)+\cdots+\I_1(d_n)+I)=(\n I:\I_1(d_1))\cap\cdots\cap(\n I:\I_1(d_n))\cap(\n I:I)$.
Since $I$ is Burch, we have $(I:\n)\nsubseteq(\n I:\n)$ by Proposition \ref{23}.
In particular $I$ is nonzero, and we see that $(I:\n)\subseteq\n=(\n I:I)$.
We obtain $(I:\n)\nsubseteq(\n I:\I_1(d_t))$ for some $1\le t\le n$.
It follows from Corollary \ref{10} that $k$ is a direct summand of the cokernel of $\partial_t$, which is $\syz_R^{t+1}M$.
\end{proof}

We can now complete the proof of Theorem \ref{210}. 

\begin{proof}[Proof of Theorem \ref{210}]
We first show the ``only if'' part.
The module $\syz^2_R k$ contains $k$ as a direct summand if and only if $\soc \syz^2_R k\not\subseteq \m \syz^2_R k$.
Hence we may assume that $R$ is complete.
There are a regular local ring $S$ and a Burch ideal $I$ of $S$ such that $R\cong S/I$.
It follows by Proposition \ref{203} that $k$ is a direct summand of $\syz^2_R k$.

Now we consider the ``if'' part.
Again we may assume that $R$ is complete.
Take a Cohen presentation $R\cong S/I$, where $(S,\n)$ is a regular local ring and $I$ is an ideal of $S$ contained in $\n^2$.
If $(I:_S \n)\nsubseteq \n^2$, then there is an element
$x\in(\m\cap \soc R)\setminus\m^2$.
One has a decomposition $\m=J\oplus (x)$, which means that $R$ is of the form $S\times_k T$ with $\edim T=1$.
Then $R$ is Burch by Example \ref{e310} and Proposition \ref{lfp}.
Thus we may assume that $(I:_S \n)\subseteq \n^2$.
Suppose that $I$ is not Burch, so that $\n(I:_S\n)=\n I$. We aim to show that $\soc \syz^2_R k\subseteq \m \syz^2_R k$.
Take minimal generators $x_1,\dots,x_e$ of $\n$.
There is a commutative diagram
$$
\xymatrix{
&& 0 \ar[d] & 0 \ar[d] & \\
&& \syz^2_S k \ar[d] \ar[r] & \syz^2_R  \ar[d]k &\\
0 \ar[r] & I^e \ar[d] \ar[r] & S^e \ar[d] \ar[r] & R^e \ar[d] \ar[r] & 0 \\
0 \ar[r] & I \ar[d] \ar[r] & \n \ar[d] \ar[r] & \m \ar[d] \ar[r] & 0 \\
& I/\n I \ar[d] & 0 & 0 & \\
& 0 & &
}
$$
of $S$-modules with exact rows and columns.
Applying the snake lemma, we get an exact sequence 
\begin{equation} \label{eq}
\syz^2_S k \to \syz^2_R k \xrightarrow[]{\delta} I/\n I \to 0,
\end{equation}
where $\delta$ sends each element $a\in \syz^2_R k$ whose preimage in $S^e$ is ${}^\t(a_1,\dots,a_e)$ to the image of $\sum_i x_ia_i$ in $I/\n I$. 
Now consider element $a\in \soc \syz^2_R k$.
This means that the preimage ${}^\t(a_1,\dots,a_e)\in S^e$ of $a$ satisfies $a_i\in(I:_S\n)$ for all $i$.
Therefore, the element $\sum_i x_ia_i \in S$ is contained in $\n(I:_S\n)=\n I$.
This yields that $\delta(a)=0$.
By the exact sequence \eqref{eq}, we can take the preimage $(a_1,\dots,a_e)\in S^e$ of $a$ to be contained in $\syz^2_S k$.
We already have ${}^\t(a_1,\dots,a_e)\in (I:_S\n)S^e\subseteq \n^2S^e$.
It follows that ${}^\t(a_1,\dots,a_e)\in\syz^2_S k\cap\n^2S^e\subseteq \n\syz^2_S k$, see \cite[Theorems 3.7 and 4.1]{HSV} for the second containment.
Consequently, the element $a$ is contained in $\m\syz^2_R k$.  This allows us to conclude that if $\soc \syz^2_R k\not\subseteq \m \syz^2_R k$ then $I$ is a Burch ideal, and hence $R$ is a Burch ring.
\end{proof}

In view of Theorem \ref{210}, one may wonder if an artinian local ring $R$ is Burch if the residue field $k$ is a direct summand of $\syz^nk$ for some $n\ge3$.
This is not true in general:

\begin{ex}\label{r8}
Let $k$ be a field, and consider the ring $R=k[\![x,y]\!]/I$, where $I=(x^4,x^2y^2,y^4)$.
The minimal free resolution of $k$ is
$$
0 \gets k\gets R\xleftarrow{\left(\begin{smallmatrix}x&y\end{smallmatrix}\right)} R^2\xleftarrow{\left(\begin{smallmatrix}-y&xy^2&x^3&0\\x&0&0&y^3\end{smallmatrix}\right)}R^4\xleftarrow{\left(\begin{smallmatrix}xy^2&0&x^3&0&0&y^3&0&0\\y&x&0&0&0&0&y^2&0\\0&0&y&x&0&0&0&y^2\\0&0&0&0&y&-x&0&0\end{smallmatrix}\right)}R^8\gets\cdots.
$$
We have $\soc\syz^3k=\soc R^4=(x^3y,xy^3)R^4$.
The column vector $z:={}^\t(x^3y,0,0,0)=y\cdot{}^\t(x^3,0,y,0)-{}^\t(0,0,y^2,0)$ is in $\soc\syz^3k\setminus\m\syz^3k$.
The assignment $1\mapsto z$ makes a split monomorphism $k\to\syz^3k$, and $k$ is a direct summand of $\syz^3k$.
However, $R$ is not Burch as one can easily check the equality $\m(I:\m)=\m I$.
\end{ex}

\section{Burch rings of positive depth}\label{main2Sec}

In this section, we study Burch rings of positive depth.
First of all, let us investigate what Gorenstein Burch rings are.

\begin{prop}\label{r5}
A local ring is Burch and Gorenstein if and only if it is a hypersurface.
\end{prop}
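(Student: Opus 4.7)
My plan is to handle both directions of the biconditional. For the easy direction, every hypersurface is well-known to be Gorenstein, and Example \ref{e310} already shows that hypersurfaces are Burch, so this direction is essentially immediate. I will therefore focus on the reverse implication: that every Gorenstein Burch local ring is a hypersurface.

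Assume $R$ is Burch and Gorenstein. Using the definition of Burch ring, I would pass to the completion $\widehat R$ and fix a maximal $\widehat R$-regular sequence $\xx$, a regular local ring $(S,\n)$, and a Burch ideal $I \subseteq S$ with $\widehat R/(\xx) \cong S/I$. Since $\widehat R$ is Gorenstein and $\xx$ is a regular sequence, the quotient $\widehat R/(\xx)$ is Gorenstein; maximality of $\xx$ combined with the Cohen--Macaulay property forces $\widehat R/(\xx)$ to be artinian. Hence $I$ is an $\n$-primary Burch ideal of the regular local ring $S$ with $S/I$ Gorenstein.

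At this point I will invoke Theorem \ref{r9}, which gives precisely that $I = (x_1^r, x_2, \ldots, x_n)$ for some minimal system of generators $x_1, \ldots, x_n$ of $\n$ and some integer $r \geq 1$. Since $x_2, \ldots, x_n$ is part of a regular system of parameters of $S$, the quotient $S/(x_2, \ldots, x_n)$ is a discrete valuation ring (or a field if $n = \dim S$), so $S/I$ is a quotient of a DVR by a power of its uniformizer, hence a hypersurface. Thus $\widehat R/(\xx)$ is a hypersurface, i.e., $\codepth \widehat R/(\xx) \leq 1$.

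To finish, I would lift the hypersurface property from $\widehat R/(\xx)$ back to $\widehat R$ itself. The key observation is that modding out by a regular sequence cannot decrease codepth: $\depth$ drops by exactly $|\xx|$, while $\edim$ can drop by at most $|\xx|$ (since generators of $(\xx)$ modulo $\m^2$ span at most a $|\xx|$-dimensional subspace of $\m/\m^2$). Consequently $\codepth \widehat R \leq \codepth \widehat R/(\xx) \leq 1$, so $\widehat R$, and therefore $R$, is a hypersurface. The main technical step is this last codepth inequality; it is elementary but requires being careful that the regular sequence $\xx$ provided by the Burch definition need not consist of minimal generators of $\m$.
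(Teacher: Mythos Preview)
Your proposal is correct and follows essentially the same route as the paper: reduce to the artinian Gorenstein quotient $\widehat R/(\xx)\cong S/I$, apply Theorem \ref{r9} to see $S/I$ is a hypersurface, and conclude that $R$ is a hypersurface. The paper compresses your final codepth-lifting step into the phrase ``and so is $R$''; your explicit inequality $\codepth \widehat R\le\codepth \widehat R/(\xx)$ is exactly what justifies that phrase.
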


\begin{proof}
Let $R$ be a local ring of dimension $d$.
If $R$ is hypersurface, then $R$ is clearly Gorenstein, and it is also Burch by Example \ref{e310}.
Conversely, suppose that $R$ is Burch and Gorenstein.
Then there exists a system of parameters $\xx=x_1,\dots,x_d$ such that $\widehat R/(\xx)$ is an artinian Gorenstein Burch local ring.
By definition, there exist a regular local ring $(S,\n)$ and a Burch ideal $I$ of $S$ such that $\widehat R/(\xx)\cong S/I$.
By Theorem \ref{r9}, there are a minimal system of generators $y_1,\dots,y_n$ of $\n$ with $n>0$ and an integer $r>0$ such that $I=(y_1^r,y_2,\dots,y_n)$.
In particular, $S/I\cong\widehat R/(\xx)$ is a hypersurface, and so is $R$.
\end{proof}

A Cohen--Macaulay local ring $R$ is said to have {\em minimal multiplicity} if $e(R)=\codim R+1$.

\begin{prop}\label{minmulti}
Let $(R,\m, k)$ be a  Cohen--Macaulay local ring with minimal multiplicity and infinite residue field $k$. Then $R$ is Burch. 
\end{prop}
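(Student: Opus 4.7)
The plan is to reduce to the artinian case by modding out a minimal reduction of $\m$. First, if $\edim R = \dim R$ then $R$ is regular, hence a hypersurface, so it is Burch by Example \ref{e310}. I therefore assume $\edim R > \dim R$ and set $d = \dim R = \depth R$.

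Since $k$ is infinite and $\m$ is $\m$-primary, I invoke the existence of a minimal reduction $J = (x_1,\dots,x_d)$ of $\m$ generated by $d$ elements. Because $R$ is Cohen--Macaulay, $\xx = x_1,\dots,x_d$ is then an $R$-regular sequence of maximal length, and it remains a maximal regular sequence in the faithfully flat extension $\widehat R$. The classical characterization of minimal multiplicity (Sally's criterion) states that $e(R) = \codim R + 1$ is equivalent to $\m^2 = J\m$ for any minimal reduction $J$ of $\m$ under the Cohen--Macaulay assumption. Passing to the quotient yields an artinian local ring $\bar R := R/J$ with maximal ideal $\bar \m := \m/J$ satisfying $\bar \m^{\,2} = 0$ and $\edim \bar R = \edim R - d \ge 1$. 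Artinian local rings are complete, so $\widehat R/(\xx) \cong R/J = \bar R$, and a Cohen presentation reads $\bar R \cong S/\n^2$, where $(S,\n)$ is a complete regular local ring with $\edim S = \edim \bar R \ge 1$ (the kernel of the surjection $S \twoheadrightarrow \bar R$ lifting a minimal basis of $\bar\m$ contains $\n^2$ by $\bar \m^{\,2}=0$, and a length count forces equality).

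Finally I verify that $\n^2$ is a Burch ideal of $S$; this is immediate from Example \ref{r3}(2) applied with the ideal $\n$ itself, since $\n \cdot \n = \n^2 \neq 0$ as $S$ is a regular local ring of positive dimension. Definition \ref{d310} then exhibits $R$ as a Burch ring. The step I expect to require the most care to cite cleanly is the reduction/minimal-multiplicity dictionary: the hypothesis that $k$ is infinite is used precisely to guarantee a minimal reduction generated by $d = \dim R$ elements, and Sally's equivalence $e(R) = \edim R - \dim R + 1 \iff \m^2 = J\m$ is what forces the artinian quotient to have square-zero maximal ideal; once these are in hand, the rest is a direct application of the framework built in Section \ref{basicSec}.
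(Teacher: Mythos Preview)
Your proof is correct and follows the same overall strategy as the paper: use the infinite residue field to pick a parameter ideal $J$ that is a minimal reduction of $\m$, so that $\m^2=J\m$ and the artinian quotient $\bar R=R/J$ has $\bar\m^{\,2}=0$; then verify that $\bar R$ is Burch and conclude by the definition. The one noteworthy difference is in the last step. The paper observes that $\bar\m^{\,2}=0$ makes $\syz^1_{\bar R}k$ a $k$-vector space, hence $k$ is a direct summand of $\syz^2_{\bar R}k$, and then invokes Theorem~\ref{210} to deduce that $\bar R$ is Burch. You instead write $\bar R\cong S/\n^2$ via a minimal Cohen presentation and cite Example~\ref{r3}(2) to see that $\n^2$ is a Burch ideal. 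Your route is slightly more elementary, since it relies only on material from Section~\ref{basicSec} and avoids the forward reference to Theorem~\ref{210}; the paper's route, on the other hand, illustrates the syzygy criterion in action. Both are fine, and the length/Sally-criterion details you spell out (in particular $\edim\bar R=\edim R-d\ge1$ and hence $I=\n^2$) are exactly what is implicit in the paper's one-line appeal to a ``general system of parameters.''
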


\begin{proof}
We can find a general system of parameters $\underline x$ such that $A= R/(\underline x)$ is artinian and still has minimal multiplicity. This simply means that 
$\m_A^2=0$, so the first syzygy of $k$ is a $k$-vector space. Thus $A$ is Burch by \ref{210} and so is $R$. 
\end{proof}

We establish a lemma to prove our next result on Burch rings.

\begin{lem}\label{r12}
Let $(R,\m,k)$ be a $1$-dimensional Cohen--Macaulay local ring with minimal multiplicity.
Then there exists an isomorphism $\m^\ast\cong\m$, where $(-)^\ast=\Hom_R(-,R)$.
\end{lem}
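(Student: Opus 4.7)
The plan is to use the standard characterization of minimal multiplicity in dimension one: there exists a non-zerodivisor $x \in \m$ with $\m^2 = x\m$. When $k$ is infinite this is classical (take a sufficiently general element of $\m$, which is then a minimal reduction of $\m$). When $k$ is finite one reduces to the infinite case by the faithfully flat base change $R \to R(T) := R[T]_{\m R[T]}$, which preserves being $1$-dimensional Cohen--Macaulay with minimal multiplicity, and the resulting isomorphism descends back to $R$ because both $\m$ and $\m^\ast$ are finitely generated rank-one maximal Cohen--Macaulay modules.

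With such $x$ fixed, I would define a candidate isomorphism $\phi \colon \m \to \m^\ast$ by the rule $\phi(m)(n) := mn/x$. This formula is well-defined: for any $m, n \in \m$ the product $mn$ lies in $\m^2 = x\m \subseteq (x)$, and since $x$ is a non-zerodivisor, $mn/x$ is an unambiguous element of $R$. The $R$-linearity of $\phi$ and of each $\phi(m)$ is then immediate.

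Injectivity is clear from $\phi(m)(x) = m$, so $\phi(m) = 0$ forces $m = 0$. For surjectivity, given $f \in \m^\ast$, I would apply $R$-linearity to the identity $nx = xn$ in $\m$ (which makes sense since $n \in \m \subseteq R$) to obtain
$$ n \cdot f(x) \;=\; f(nx) \;=\; x \cdot f(n) \quad \text{for every } n \in \m. $$
In particular $f(x) \cdot \m \subseteq (x)$. If $f(x)$ happens to be a unit of $R$, then $\m \subseteq (x) \subseteq \m$, so $\m = (x)$ and $R$ is a discrete valuation ring; in that degenerate case $\m \cong R \cong \m^\ast$ directly. Otherwise $f(x) \in \m$, and the displayed identity yields $\phi(f(x))(n) = f(x) \cdot n / x = f(n)$ for every $n \in \m$, so $f = \phi(f(x))$.

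The main obstacle is really only the first step, namely the uniform existence of an element $x \in \m$ with $\m^2 = x\m$ without any hypothesis on $k$. Once this is granted (via the faithfully flat reduction to infinite residue field together with descent of isomorphisms of rank-one MCM modules), the rest of the argument is a short and explicit calculation inside the total quotient ring of $R$.
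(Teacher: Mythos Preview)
Your argument and the paper's are essentially identical. Both produce a non-zerodivisor $x\in\m$ with $\m^2=x\m$ and then identify $\m^\ast$ with $\m$ via division by $x$ in the total quotient ring $Q$: your formula $\phi(m)(n)=mn/x$ is precisely the inverse of the paper's chain $\m^\ast\cong\Hom_R(\m,xR)\cong(xR:_Q\m)=\m$, and your surjectivity computation $f=\phi(f(x))$ corresponds to the paper's verification that $(xR:_Q\m)\subseteq R$ (hence $=\m$ when $R$ is not a discrete valuation ring).

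The only real difference is how the existence of $x$ is obtained. The paper simply invokes \cite[Lemma~1.11]{Lip}, which furnishes such an $x$ with no hypothesis on the residue field. Your reduction to infinite $k$ via $R\to R(T)$ and the claimed descent of the isomorphism is the soft spot: the assertion that $M\otimes_R R(T)\cong N\otimes_R R(T)$ forces $M\cong N$ is not justified by the phrase ``rank-one MCM'' alone. Specializing an $R(T)$-isomorphism to one over $R$ amounts to finding a value of $T$ in $k$ at which a certain determinant polynomial does not vanish, and when $k$ is finite such a value need not exist. This descent can in fact be repaired, but it takes more than a sentence; the cleanest route is to cite Lipman directly, as the paper does, and drop the $R(T)$ detour entirely.
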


\begin{proof}
If $R$ is a discrete valuation ring, then $\m\cong R$, and hence $\m^\ast\cong\m$.
So we assume that $R$ is not a discrete valuation ring.
Since $R$ has minimal multiplicity, by \cite[Lemma 1.11]{Lip}, there is an $R$-regular element $x\in\m$ such that $\m^2=x\m$.
Let $Q$ be the total quotient ring of $R$.
We have
$$
\m^\ast=\Hom_R(\m,R)\cong\Hom_R(\m,xR)\cong(xR:_Q\m)\supseteq\m,
$$
where the second isomorphism follows from \cite[Proposition 2.4(1)]{lp} for instance.
For each element $\frac{a}{s}\in(xR:_Q\m)$, we have $ax\in a\m\subseteq sxR$, which implies $a\in sR$ as $x$ is $R$-regular, and hence $\frac{a}{s}\in R$.
Therefore $(xR:_Q\m)$ is an ideal of $R$ containing $\m$.
Since $R$ is not a discrete valuation ring, it is a proper ideal.
We get $(xR:_Q\m)=\m$, and consequently $\m^\ast\cong\m$.
\end{proof}

Cohen--Macaulay rings of dimension $1$ with minimal multiplicity have a remarkable property.

\begin{prop}\label{r13}
Let $(R,\m,k)$ be a $1$-dimensional Cohen--Macaulay local ring with minimal multiplicity.
Then the quotient artinian ring $R/(x)$ is a Burch ring for any  parameter $x\in\m\setminus\m^2$.
\end{prop}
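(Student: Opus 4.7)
The plan is to apply Theorem \ref{210}: it suffices to exhibit $k$ as a direct summand of $\Omega^2_A k$ for $A:=R/(x)$. After passing to the completion via Proposition \ref{23}(5) and choosing a Cohen presentation $R=S/J$ with $(S,\n)$ regular local of embedding dimension $e=\edim R$, the question reduces to showing that $I:=J+\tilde x S$ is a Burch ideal of $S$, where $\tilde x\in\n\setminus\n^2$ lifts $x$. The edge case $e=1$ is trivial ($R$ is a DVR, $A=k$, which is Burch by Example \ref{r3}(1)), so assume $e\geq 2$ and fix a minimal reduction $y\in\m$ with $\m^2=y\m$.

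The argument splits according to whether $x$ itself is a minimal reduction. If so, then $\m_A^2=(\m^2+(x))/(x)=(x\m+(x))/(x)=0$, so $\Omega^1_A k=\m_A$ is a $k$-vector space of dimension $e-1$. Then $\Omega^2_A k$, being the kernel of a map $A^{e-1}\to A$ whose cokernel is $k$ and whose image $\m_A$ is killed by $\m_A$, is easily seen to equal $\m_A^{e-1}$, a direct sum of copies of $k$. Hence $k$ is a direct summand of $\Omega^2_A k$ and $A$ is Burch in this subcase.

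When $x$ is not a minimal reduction, I work directly inside $\Omega^2_A k\subseteq A^{e-1}$ (using a minimal system of generators of $\m_A$ obtained from minimal generators of $\m$ other than $x$). A direct calculation shows that the socle of $\Omega^2_A k$ equals $\soc(A)^{e-1}$, of $k$-dimension $(e-1)r(A)=(e-1)^2$ since $r(A)=r(R)=e-1$ for $1$-dimensional Cohen--Macaulay rings of minimal multiplicity. On the other hand, the standard change-of-rings formula for cutting down by a non-zerodivisor $x\in\m\setminus\m^2$ gives $P_k^A(t)=P_k^R(t)/(1+t)=1/(1-(e-1)t)$, so that $\mu(\Omega^2_A k)=b_2^A(k)=(e-1)^2$ as well. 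I would then use Lemma \ref{r12}, the self-duality $\m\cong\m^*$, in the form of the explicit isomorphism $L:=((x):_R\m)\cong\m$ sending $r\mapsto ry/x$, to exhibit a preferred socle element $\bar s\in\soc(A)=L/xR$ (corresponding via the isomorphism to a minimal generator of $\m$) such that the element $(\bar s,0,\ldots,0)\in\soc(A)^{e-1}\subseteq\Omega^2_A k$ is not absorbed into $\m_A\Omega^2_A k$; this furnishes the desired direct summand.

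The main obstacle is the verification that this chosen socle element genuinely survives modulo $\m_A\Omega^2_A k$: in examples where $\soc(A)\subseteq\m_A^2$ (as happens for particular parameters like $x=t^5$ in $k[[t^3,t^4,t^5]]$), the absorption question cannot be settled by simple degree considerations. Here the equality $\m^2=y\m$ is essential: it forces every product in $\m_A\Omega^2_A k$ to factor through multiplication by $y$, and the self-duality of Lemma \ref{r12} ensures that the preferred socle generator furnished above cannot arise this way. Combined with the matching of dimensions $\dim_k\soc(\Omega^2_A k)=\mu(\Omega^2_A k)=(e-1)^2$, this confirms that the canonical map $\soc(\Omega^2_A k)\to\Omega^2_A k/\m_A\Omega^2_A k$ is nonzero, yielding $k\mid\Omega^2_A k$. (If $k$ is finite, one first passes to the faithfully flat extension $R(X)=R[X]_{\m R[X]}$ with residue field $k(X)$; the Burch property descends by Proposition \ref{23}(5).)
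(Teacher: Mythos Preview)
Your proposal has a genuine gap in Case~2 (when $x$ is not a minimal reduction). You correctly establish $\dim_k\soc(\Omega^2_A k)=(e-1)^2=\mu(\Omega^2_A k)$ and the isomorphism $L=((x):_R\m)\cong\m$ via $r\mapsto ry/x$ (which does work: for $r\in L$ one has $ry\in r\m\subseteq(x)$ and $ry/x\in\m$ since otherwise $x\in\m^2$). But the crucial step---showing that your preferred socle element $(\bar s,0,\ldots,0)$ lies outside $\m_A\Omega^2_A k$---is not proved. Your sentence ``$\m^2=y\m$ forces every product in $\m_A\Omega^2_A k$ to factor through multiplication by $y$, and the self-duality of Lemma~\ref{r12} ensures the preferred socle generator cannot arise this way'' does not constitute an argument: while $\m_A\Omega^2_A k\subseteq(\bar y\m_A)^{e-1}$, the element $\bar s$ can lie in $\bar y\m_A$ (indeed in $R=k[\![t^3,t^4,t^5]\!]$ with $x=t^5$, $y=t^3$, one has $\soc A=\bar y\m_A=\m_A^2$), so more is needed. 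The equality of dimensions $\dim_k\soc=\mu$ by itself says nothing about the map $\soc(\Omega^2_A k)\to\Omega^2_A k/\m_A\Omega^2_A k$ being nonzero (think of $M=R$ for $R$ artinian Gorenstein).

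The paper's proof avoids this difficulty entirely by working over $R$ before reduction. From $\m^\ast\cong\m$ (Lemma~\ref{r12}) one gets, via a pullback, an isomorphism $\syz_R^2 k\cong\m^{\oplus r}$ with $r=r(R)$. Since $x\in\m\setminus\m^2$ is regular, the sequence $0\to k\to\m/x\m\to\m/(x)\to0$ splits, giving $\m/x\m\cong k\oplus\m/(x)$. Reducing the free resolution of $k$ modulo $x$ yields
\[
k^{\oplus r}\oplus(\m/(x))^{\oplus r}\cong(\m/x\m)^{\oplus r}\cong\syz_A^1 k\oplus\syz_A^2 k,
\]
and Krull--Schmidt (over the artinian, hence henselian, ring $A$) forces $k$ to appear in $\syz_A^1 k$ or $\syz_A^2 k$; in the former case one applies $\syz_A$. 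No case split on whether $x$ is a minimal reduction is needed, and no delicate socle-versus-$\m\Omega^2$ comparison arises.
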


\begin{proof}
If $R$ is regular, then it is a discrete valuation ring, and $x$ is a uniformizer.
Hence $R/(x)$ is a field, and it is Burch.
Thus we assume that $R$ is singular.
Applying $(-)^\ast=\Hom_R(-,R)$ to the natural exact sequence $0\to\m\to R\to k\to0$, we get an exact sequence $0\to R\to\m^\ast\to k^{\oplus r}\to0$, where $r$ is the type of $R$.
Making the pullback diagram of the map $\m^\ast\to k^{\oplus r}$ and the natural surjection $R^{\oplus r}\to k^{\oplus r}$, we obtain an exact sequence $0 \to \m^{\oplus r} \to R^{\oplus(r+1)} \to \m^\ast\to0$.
As $R$ is singular, $\m^{\oplus r}$ does not have a nonzero free summand by \cite[Corollary 1.3]{D}.
We get an isomorphism $\m^{\oplus r}\cong\syz(\m^\ast)$.
Combining this with Lemma \ref{r12} yields $\m^{\oplus r}\cong\syz\m\cong\syz^2k$.
Since $x$ is an $R$-regular element in $\m\setminus\m^2$, there is a split exact sequence $0 \to k \to \m/x\m \to \m/(x) \to 0$, which induces $\m/x\m\cong k\oplus\m/(x)$.
We obtain isomorphisms of $R/(x)$-modules
\begin{align*}
k^{\oplus r}\oplus(\m/(x))^{\oplus r}
&\cong(\m/x\m)^{\oplus r}
\cong\syz^2k/x\syz^2k\\
&\cong\syz_{R/(x)}(\m/x\m)
\cong\syz_{R/(x)}k\oplus\syz_{R/(x)}(\m/(x))
\cong\syz_{R/(x)}k\oplus\syz_{R/(x)}^2k,
\end{align*}
where the third isomorphism holds since there is an exact sequence $0\to\syz^2k\to R^{\oplus n}\to\m\to0$ with $n=\edim R$, which induces an exact sequence $0\to\syz^2k/x\syz^2k\to(R/(x))^{\oplus n}\to\m/x\m\to0$.
As $R/(x)$ is an artinian local ring, it is henselian.
The Krull--Schmidt theorem implies that $k$ is a direct summand of either $\syz_{R/(x)}k$ or $\syz_{R/(x)}^2k$.
In the former case, applying $\syz_{R/(x)}(-)$ shows that $k$ is a direct summand of $\syz_{R/(x)}^2k$.
Theorem \ref{210} concludes that $R/(x)$ is a Burch ring.
\end{proof}

\begin{prop}\label{detEx}
Let $S=k[x_1,\dots,x_n]$ be a polynomial ring over an infinite field and $I\subset S$ is a homogenous ideal such that $S/I$ is Cohen-Macaulay and $I$ has a linear resolution. Then $R=(S/I)_{\m}$ is Burch where $\m= (x_1,\dots,x_n)$. 
\end{prop}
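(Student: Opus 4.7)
The plan is to identify the Artinian reduction of $R$ explicitly and then verify the Burch condition directly. We may assume $I\neq\m$ so that $R$ is not a field (if $R=k$ then $R$ is not Burch by Definition \ref{d310}, so this degenerate case should be tacitly excluded).

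Using that $k$ is infinite and $S/I$ is Cohen--Macaulay of dimension $t:=\dim S/I$, I would first choose linear forms $\ell_1,\dots,\ell_t\in S$ forming a regular sequence on $S/I$. Let $\bar S:=S/(\ell_1,\dots,\ell_t)$, a polynomial ring in $n-t\geq 1$ variables, write $\bar\m$ for its irrelevant ideal, and set $\bar I:=(I+(\ell_1,\dots,\ell_t))/(\ell_1,\dots,\ell_t)\subseteq\bar S$. The images of $\ell_1,\dots,\ell_t$ in $R$ then form a maximal $R$-regular sequence, and $R/(\ell_1,\dots,\ell_t)R$ is identified (after localization/completion at $\bar\m$) with the Artinian local ring $A:=\bar S/\bar I$. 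By Definition \ref{d310} it therefore suffices to show that $A$ is a Burch ring of depth zero, equivalently that $\bar I$ is a Burch ideal of the regular local ring $\widehat{\bar S_{\bar\m}}$.

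The central step is to establish $\bar I=\bar\m^d$. Since $I$ has a $d$-linear resolution over $S$, the Castelnuovo--Mumford regularity satisfies $\operatorname{reg}(S/I)=d-1$. Graded Betti numbers are preserved when modding out by a regular sequence of linear forms on a Cohen--Macaulay module, so $\bar I$ also has a $d$-linear resolution over $\bar S$ and $\operatorname{reg}(A)=d-1$. For an Artinian standard graded algebra regularity coincides with the top nonzero degree, so $A_j=0$ for every $j\geq d$. In particular $\bar I_d=\bar S_d$, and because $\bar I$ is generated in degree $d$ (inherited from $I$), this forces $\bar I=\bar\m^d$.

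Verifying Burchness of $\bar\m^d$ is then immediate: $(\bar\m^d:\bar\m)=\bar\m^{d-1}$ while $(\bar\m\cdot\bar\m^d:\bar\m)=\bar\m^d$, and these differ since $\bar\m\neq 0$ (we arranged $n-t\geq 1$). Hence $A$ is Burch of depth zero by Remark \ref{r312}, and $R$ is Burch. The main obstacle, as I see it, is the identification $\bar I=\bar\m^d$, which relies on two standard but nontrivial facts: the preservation of graded Betti numbers under a linear regular sequence on a Cohen--Macaulay module, and the equality of Castelnuovo--Mumford regularity with top nonzero degree for Artinian standard graded algebras. Once those are granted, everything else is a one-line check.
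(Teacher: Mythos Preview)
Your argument is correct and follows essentially the same route as the paper's: reduce modulo a linear system of parameters, identify the resulting Artinian ideal as a power of the maximal ideal via Castelnuovo--Mumford regularity, and conclude Burchness from Example~\ref{r3}. One small correction to your parenthetical remark: a field \emph{is} Burch (by Lemma~\ref{l310} one computes $c_k=1>0$), so the case $I=\m$ need not be excluded.
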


\begin{proof}
Let $A= S/I$ and $(l_1,\dots, l_d)$ be a general linear system of parameters on $A$. We write $A/(l_1,\dots, l_d)A$ as $T/J$ where $T$ is a polynomial ring in $n-d$ variables over $k$ and $J$ is a zero-dimensional ideal. Then $J$ still has linear resolution. Assume $I$ (and $J$) are generated in degree $t$, then the regularity of $J$ is $t$, but since $J$ is zero-dimensional, the socle degree of $J$ is $t-1$. Thus $J=\n^t$ where $\n$ is the irrelevant ideal of $T$, and so $R$ is Burch by  definition and Example \ref{r3}.
\end{proof}

\begin{ex}
There are many examples satisfying the conditions of Proposition \ref{detEx}. For example, let $m\geq n$ and let $I=I_n \subset k[x_{ij}]=S$ be the ideal generated by maximal minors in a $m$ by $n$ matrix of indeterminates. Then it is well-known that  $S/I$ is Cohen-Macaulay with $\dim S/I = (m+1)(n-1)$ and the $a$-invariant of $S/I$ is $-m(n-1)$ (see \cite{BH}). It follows that the regularity of $I$ is $n$, so it has linear resolution. 

Another source of examples are Stanley-Reisner rings of ``facet constructible" or ``stacked" simplicial complexes, see \cite[Theorem 4.1 and 4.4]{DS}. 
\end{ex}

We will show in Corollary \ref{r4} that if $x$ is a regular element of a local ring $(R,\m)$ such that $R/(x)$ is Burch, then $x\not\in\m^2$.
It is natural to ask whether the quotient ring $R/Q$ of a Burch ring $R$ is again Burch for any ideal $Q$ generated by regular sequence consisting of elements in $\m\setminus \m^2$.
This is true if $R$ is either a hypersurface or a Cohen--Macaulay local ring of dimension one with minimal multiplicity, as we saw in Propositions \ref{r5} and \ref{r13}.
The example below says that the question is not always affirmative.

\begin{ex} \label{e44}
Let $k$ be a field, and let $R=k[\![x,y,z]\!]/\I_2\!\left(\begin{smallmatrix}x^2&y&z^2\\y&z^2&x^2\end{smallmatrix}\right)$.
The Hilbert--Burch theorem implies that $R$ is a Cohen--Macaulay local ring of dimension $1$.
The ring $R$ is a Burch ring since so is the artinian quotient ring $R/(x)=k[\![y,z]\!]/(y^2,yz^2,z^4)$.
However, the artinian ring $R/(y)=k[\![x,z]\!]/(x^4,x^2y^2,y^4)$ is not Burch.
By Theorem \ref{210}, the $R$-module $k$ is a direct summand of $\syz_{R/(x)}^2k$, but not a direct summand of $\syz_{R/(y)}^2k$.
Incidentally, the module $k$ is a direct summand of $\syz_{R/(y)}^3k$ by Example \ref{r8}.
\end{ex}

To show our next result on Burch rings, we prepare a lemma on cancellation of free summands.

\begin{lem}\label{r11}
Let $R$ be a local ring.
Let $M,N$ be $R$-modules having no nonzero free summand.
If $M\oplus R^{\oplus a}\cong N\oplus R^{\oplus b}$ for some $a,b\ge0$, then $M\cong N$ and $a=b$.
\end{lem}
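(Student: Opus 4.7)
My plan is to represent the given isomorphism $\phi\colon M \oplus R^a \xrightarrow{\sim} N \oplus R^b$ as a block matrix $\phi = \bigl(\begin{smallmatrix}\alpha & \beta\\ \gamma & \sigma\end{smallmatrix}\bigr)$ with components $\alpha\colon M\to N$, $\beta\colon R^a\to N$, $\gamma\colon M\to R^b$, and $\sigma\colon R^a\to R^b$, and analogously write $\phi^{-1} = \bigl(\begin{smallmatrix}\alpha' & \beta'\\ \gamma' & \sigma'\end{smallmatrix}\bigr)$. The goal is then to exploit the no-free-summand hypothesis on $M$ and $N$ to force $\sigma$ to be an isomorphism, after which the conclusion follows from a block-diagonalization. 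The linchpin is the elementary observation that a finitely generated module $L$ over a local ring $R$ has no nonzero free summand if and only if every $R$-homomorphism $L\to R$ has image contained in $\m$: if $f(l)$ were a unit for some $f \in \Hom_R(L,R)$ and $l \in L$, then $r \mapsto rl$ would be a split injection $R \hookrightarrow L$, yielding a free summand.

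Applying this observation to $M$ and $N$, the maps $\gamma\colon M \to R^b$ and $\gamma'\colon N \to R^a$ have images contained in $\m R^b$ and $\m R^a$ respectively. The ``free-to-free'' components of the identities $\phi\phi^{-1} = \id$ and $\phi^{-1}\phi = \id$ read $\gamma\beta' + \sigma\sigma' = \id_{R^b}$ and $\gamma'\beta + \sigma'\sigma = \id_{R^a}$; since $\gamma\beta'$ factors through $\gamma$ and $\gamma'\beta$ factors through $\gamma'$, both composites land in the maximal ideal. Hence $\sigma\sigma' \equiv \id_{R^b}$ and $\sigma'\sigma \equiv \id_{R^a}$ modulo $\m$. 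Reducing to the residue field $k$ makes $\bar\sigma\colon k^a \to k^b$ and $\overline{\sigma'}\colon k^b \to k^a$ mutually inverse $k$-linear maps, which forces $a = b$ and makes $\bar\sigma$ invertible; Nakayama then promotes $\sigma$ to an actual isomorphism $R^a \xrightarrow{\sim} R^a$.

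With $\sigma$ invertible, the finish is the standard Schur-complement manoeuvre: multiplying $\phi$ on the right by the automorphism $\bigl(\begin{smallmatrix}\id & 0\\ -\sigma^{-1}\gamma & \id\end{smallmatrix}\bigr)$ of $M \oplus R^a$ and on the left by the automorphism $\bigl(\begin{smallmatrix}\id & -\beta\sigma^{-1}\\ 0 & \id\end{smallmatrix}\bigr)$ of $N \oplus R^a$ yields the block-diagonal isomorphism $\bigl(\begin{smallmatrix}\alpha - \beta\sigma^{-1}\gamma & 0\\ 0 & \sigma\end{smallmatrix}\bigr)$. Since the total map is an isomorphism and $\sigma$ is already known to be one, the Schur complement $\alpha - \beta\sigma^{-1}\gamma\colon M \to N$ must be an isomorphism as well, proving $M \cong N$.

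The main obstacle is really just isolating the right characterization of ``no nonzero free summand'' in terms of homomorphisms into $R$; once that criterion is in hand, the remaining steps are formal linear algebra that transfer intact from the vector-space setting, and the argument in fact gives the slightly sharper statement that the isomorphism $M \cong N$ can be produced canonically as the Schur complement of any chosen $\phi$.
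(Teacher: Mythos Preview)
Your proof is correct and takes a genuinely different route from the paper's. The paper passes to the completion $\widehat R$ (where Krull--Schmidt holds), writes $\widehat M = X \oplus \widehat R^{\,c}$ and $\widehat N = Y \oplus \widehat R^{\,d}$ with $X,Y$ free-summand-free, applies Krull--Schmidt to get $X \cong Y$ and $c+a = d+b$, and then invokes \cite[Corollary~1.15]{LW} to descend the resulting isomorphism $\widehat N \cong \widehat{M \oplus R^{a-b}}$ back to $R$, finally using the no-free-summand hypothesis on $N$ to force $a=b$. Your argument instead stays over $R$ throughout: you exploit the criterion that $L$ has no free summand iff $\Hom_R(L,R)\cdot L \subseteq \m$, which forces the off-diagonal composites $\gamma\beta'$ and $\gamma'\beta$ to vanish modulo $\m$, making $\sigma$ invertible by a determinant (or Nakayama) argument, and then finish with a Schur complement. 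Your approach is more elementary and self-contained---it needs neither completion nor Krull--Schmidt nor the descent citation---and it yields the extra information that the isomorphism $M \cong N$ is given explicitly by the Schur complement $\alpha - \beta\sigma^{-1}\gamma$. The paper's approach, on the other hand, illustrates a general technique (complete, apply Krull--Schmidt, descend) that applies in situations where a direct matrix manipulation is less transparent.
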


\begin{proof}
We may assume $a\ge b$.
Taking the completions, we get isomorphisms $\widehat{M}\oplus\widehat{R}^{\oplus a}\cong\widehat{N}\oplus\widehat{R}^{\oplus b}$.
Write $\widehat M=X\oplus\widehat{R}^{\oplus c}$ and $\widehat N=Y\oplus\widehat{R}^{\oplus d}$ with $c,d\ge0$ integers and $X,Y$ having no nonzero free summand.
Then $X\oplus\widehat{R}^{\oplus(c+a)}\cong Y\oplus\widehat{R}^{\oplus(d+b)}$.
As $\widehat R$ is henselian, we can apply the Krull-Schmidt theorem to deduce $X\cong Y$ and $c+a=d+b$.
Hence $d=c+(a-b)$, and we get $\widehat N
=Y\oplus\widehat{R}^{\oplus d}
\cong X\oplus\widehat{R}^{\oplus(c+(a-b))}
=\widehat M\oplus\widehat{R}^{\oplus(a-b)}
\cong\widehat{L}$, where $L:=M\oplus R^{\oplus(a-b)}$.
It follows from \cite[Corollary 1.15]{LW} that $N$ is isomorphic to $L$.
Since $N$ has no nonzero free summand, we must have $a=b$, and therefore $M=L\cong N$.
\end{proof}

The following result is a higher-dimensional version of the ``only if'' part of Theorem \ref{210}.

\begin{prop} \label{p46}
Let $(R,\m,k)$ be a singular Burch ring of depth $t$,
Then $\syz^tk$ is a direct summand of $\syz^{t+2}k$.
\end{prop}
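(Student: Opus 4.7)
Plan: Reduce to the case where $R$ is complete, since Burchness and the asserted direct summand relation are preserved by completion (the latter via Lemma \ref{r11} to cancel free summands). Then there is a regular sequence $\xx = x_1, \ldots, x_t$ in $\m$ with $\bar R := R/(\xx)$ Burch of depth zero; moreover $\bar R$ is not a field, since otherwise $\xx$ would generate $\m$ as a regular sequence of length $\dim R$, forcing $R$ to be regular and contradicting the singularity hypothesis. By Theorem \ref{210} applied to $\bar R$, the residue field $k$ is a direct summand of $\syz^2_{\bar R}(k)$; iterating $\syz_{\bar R}$ then yields $\syz^j_{\bar R}(k) \mid \syz^{j+2}_{\bar R}(k)$ for every $j \geq 0$.

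I would then induct on $t$, the base case $t = 0$ being exactly Theorem \ref{210}. For the step, pick $x := x_1$ from the regular sequence. Then $R/(x)$ is Burch of depth $t-1$ (witnessed by $x_2,\ldots,x_t$), and singular: regularity of $R/(x)$ would force $x \in \m \setminus \m^2$ and $R$ regular, with the case $x \in \m^2$ ruled out by $\edim(R/(x)) = \edim R \geq \dim R > \dim(R/(x))$. The inductive hypothesis gives $\syz^{t-1}_{R/(x)}(k) \mid \syz^{t+1}_{R/(x)}(k)$ and, after one application of $\syz_{R/(x)}$, also $\syz^t_{R/(x)}(k) \mid \syz^{t+2}_{R/(x)}(k)$. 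The change-of-rings ingredient is the isomorphism $F_\bullet \otimes_R R/(x) \cong G_\bullet \oplus G_\bullet[1]$ of minimal complexes of free $R/(x)$-modules, where $F, G$ are the minimal free resolutions of $k$ over $R$ and $R/(x)$; this follows from uniqueness of minimal models because both sides are minimal and have total homology $k$ concentrated in degrees $0, 1$ (using $\pd_R R/(x) = 1$). Taking kernels in this decomposition, together with the fact that $x$ is a non-zerodivisor on each $\syz^i_R(k)$ for $i \geq 1$ (depth estimate), yields
\[
\syz^i_R(k)/x\syz^i_R(k) \cong \syz^i_{R/(x)}(k) \oplus \syz^{i-1}_{R/(x)}(k)
\]
up to free summands. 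Applied at $i = t$ and $i = t+2$ with the inductive summand relations, this shows that $\syz^t_R(k)/x\syz^t_R(k)$ is a direct summand of $\syz^{t+2}_R(k)/x\syz^{t+2}_R(k)$ as $R/(x)$-modules.

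Finally, lift this splitting from $R/(x)$ to $R$. Choose $R$-linear maps $\iota \colon \syz^t_R(k) \to \syz^{t+2}_R(k)$ and $\pi \colon \syz^{t+2}_R(k) \to \syz^t_R(k)$ whose reductions modulo $x$ realize the $R/(x)$-splitting. Then $\pi\iota - \operatorname{id}$ lies in $x \cdot \operatorname{End}_R(\syz^t_R(k))$; since $R$ is complete and $\syz^t_R(k)$ is finitely generated, $\operatorname{End}_R(\syz^t_R(k))$ is a module-finite (hence semi-local) $R$-algebra with $\m \cdot \operatorname{End}_R(\syz^t_R(k))$ contained in its Jacobson radical. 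Consequently $\pi\iota$ is a unit, $\iota$ is a split monomorphism, and $\syz^t_R(k)$ is a direct summand of $\syz^{t+2}_R(k)$ over $R$. The free-summand slack in the change-of-rings formula is absorbed using that $\syz^i_R(k)$ has no nonzero free direct summand (by \cite[Corollary 1.3]{D}, as $R$ is singular) together with Lemma \ref{r11}. The hard part will be verifying the Shamash-type identification $F \otimes_R R/(x) \cong G \oplus G[1]$ with enough care to cover the case $x \in \m^2$, and tracking free summands cleanly through the Nakayama-style lift.
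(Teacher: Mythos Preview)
There are two genuine gaps. First, the claimed isomorphism $F_\bullet\otimes_R R/(x)\cong G_\bullet\oplus G_\bullet[1]$ is simply false when $x\in\m^2$: comparing ranks in degree $1$ gives $\beta_1^R(k)=\edim R$ on the left but $\beta_1^{R/(x)}(k)+\beta_0^{R/(x)}(k)=\edim R+1$ on the right, since $\edim(R/(x))=\edim R$ in that case. So the ``hard part'' you flag cannot be carried out at all. Your justification via ``uniqueness of minimal models'' is also incorrect even when $x\notin\m^2$: a minimal complex of free modules is \emph{not} determined by its homology (there can be a nonzero attaching class in $\Ext^2_{R/(x)}(k,k)$). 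The identity $\syz_R^j k/x\syz_R^j k\cong \syz_{R/(x)}^{j}k\oplus\syz_{R/(x)}^{j-1}k$ does hold for $x\in\m\setminus\m^2$, but via the splitting $\m/x\m\cong k\oplus\m_{R/(x)}$ and the fact that $x$ is regular on $\m$, not by any uniqueness principle; and nothing in the Burch definition guarantees you can pick the first element of the witnessing regular sequence outside $\m^2$.

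Second, your lifting step is incomplete: the map $\Hom_R(\syz^t k,\syz^{t+2}k)\to\Hom_{R/(x)}(\syz^t k/x,\syz^{t+2}k/x)$ need not be surjective, the obstruction living in the $x$-torsion of $\Ext^1_R(\syz^t k,\syz^{t+2}k)$, and likewise for $\pi$. You assert the lifts exist without addressing this. The paper sidesteps both problems with a different change of rings: it observes that $\syz_R^{n+1}k$ agrees, up to free summands, with $\syz_R\bigl(\syz_{R/(x)}^{n}k\bigr)$ for \emph{any} nonzerodivisor $x$ (because applying $\syz_R$ termwise to the minimal $R/(x)$-resolution of $k$ turns each $R/(x)$-free term into an $R$-free module). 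Then one simply applies the functor $\syz_R$ to the inductive relation $\syz_{R/(x)}^{t-1}k\mid\syz_{R/(x)}^{t+1}k$; since $\syz_R$ preserves direct summands, no Nakayama-style lift is needed, and one finishes with Lemma~\ref{r11} and \cite[Corollary 1.3]{D} exactly as you planned.
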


\begin{proof}
We prove the proposition by induction on $t$.
The case $t=0$ follows from Lemma \ref{l310}, so let $t\ge1$.
There is an $R$-sequence $\xx=x_1,\dots,x_d$ such that $R/(\xx)$ is a Burch ring of depth zero.
Hence $R/(x_1)$ is a Burch ring of dimension $d-1$.
The induction hypothesis implies that $\syz_{R/(x_1)}^{t-1}k$ is a direct summand of $\syz_{R/(x_1)}^{t+1}k$.
Taking the syzygy over $R$, we see that $\syz_R\syz_{R/(x_1)}^{t-1}k$ is a direct summand of $\syz_R\syz_{R/(x_1)}^{t+1}k$.
For each $n\ge0$ there is an exact sequence $0 \to \syz_{R/(x_1)}^nk \to P_{n-1} \to \cdots \to P_1 \to P_0 \to k \to 0$ with each $P_i$ being a direct sum of copies of $R/(x_1)$, which gives rise to an exact sequence
$$
0 \to \syz_R\syz_{R/(x_1)}^nk \to \syz_RP_{n-1}\oplus R^{\oplus e_{n-1}} \to \cdots \to \syz_RP_1\oplus R^{\oplus e_1} \to \syz_RP_0\oplus R^{\oplus e_0} \to \syz_Rk \to 0
$$
with $e_i\ge0$ for $0\le i\le n-1$.
Note that each $\syz_RP_i$ is a free $R$-module.
The above sequence shows that $\syz_R^{n+1}k=\syz_R^n(\syz_Rk)$ is isomorphic to $\syz_R\syz_{R/(x_1)}^nk$ up to free $R$-summands.
We obtain an $R$-isomorphism $\syz_R^{n+1}k\oplus R^{\oplus e}\cong\syz_R\syz_{R/(x_1)}^nk$ with $e\ge0$.
Thus, for some $a,b\ge0$ we have that $\syz_R^tk\oplus R^{\oplus a}$ is a direct summand of $\syz_R^{t+2}k\oplus R^{\oplus b}$.
Since $R$ is singular, it follows from \cite[Corollary 1.3]{D} that $\syz^i_Rk$ has no nonzero free summand for all $i\ge0$.
Applying Lemma \ref{r11}, we observe that $\syz_R^tk$ is a direct summand of $\syz_R^{t+2}k$.
\end{proof}

We pose a question asking whether or not the converse of Proposition \ref{p46} holds true.

\begin{ques}
Does there exist a non-Burch local ring $(R,\m,k)$ of depth $t$ such that $\syz^tk$ is a direct summand of $\syz^{t+2}k$?
\end{ques}


\section{Some classes of Burch ideals and rings}\label{main3Sec}

In this section, we study in a regular local ring and give a complete characterization in dimension two. We also give a simple characterization of monomial Burch ideals. We compare Burch rings to other classes of rings: radical cube zero, almost Gorenstein, nearly Gorenstein, and fibre products. 

Over a two-dimensional regular local ring $(R,\m)$, the Burch ideals $I$ are characterized in terms of the minimal numbers of generators of $I$ and $\m I$.

\begin{lem} \label{l62}
Let $(R,\m)$ be a regular local ring of dimension two, and let $I$ be an $\m$-primary ideal of $R$.
Then $I$ is a Burch ideal of $R$ if and only if $\mu(\m I)<2\mu(I)$.
\end{lem}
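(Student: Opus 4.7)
The plan is to construct a natural right exact sequence
\begin{equation*}
(I:\m)/I \xrightarrow{\phi} \m/\m^2 \otimes_k I/\m I \to \m I/\m^2 I \to 0,
\end{equation*}
read off $\mu(\m I) = 2\mu(I) - \dim_k \operatorname{im}\phi$, and show that $\phi \neq 0$ is equivalent to the Burch condition via Proposition \ref{23}.

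To build this sequence I would write $\m = (x,y)$ and tensor $0 \to \m \to R \to k \to 0$ with $I$, obtaining $0 \to \Tor_1^R(k, I) \to \m \otimes_R I \to \m I \to 0$ (the leftmost Tor vanishes since $R$ is free). The crucial input of $\dim R = 2$ is that the Koszul complex on $(x,y)$ is a minimal free resolution of $k$; using it to compute $\Tor_1^R(k, I) \cong \Tor_2^R(k, R/I) \cong (I:\m)/I$, the isomorphism sends $\bar h$ to the class of $x\otimes yh - y\otimes xh \in \m\otimes_R I$, i.e.\ the image under the connecting map of the Koszul cycle $(-yh, xh) \in I^{\oplus 2}$ associated to $h \in (I:\m)$. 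Applying $k\otimes_R-$ then yields the desired right exact sequence, with
\begin{equation*}
\phi(\bar h) = \bar x \otimes \overline{yh} - \bar y \otimes \overline{xh}.
\end{equation*}

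Since $\bar x, \bar y$ form a $k$-basis of $\m/\m^2$, the condition $\phi(\bar h) = 0$ unwinds to $\overline{xh}=\overline{yh}=0$ in $I/\m I$, i.e.\ $\m h \subseteq \m I$, i.e.\ $h \in (\m I :\m)$. Hence $\ker \phi = (\m I:\m)/I$, and $\phi\neq 0$ if and only if $(\m I:\m) \subsetneq (I:\m)$, which is exactly the Burch condition by Proposition \ref{23}(2). Combined with the dimension count $\mu(\m I) = 2\mu(I) - \dim_k\operatorname{im}\phi$, this yields the equivalence $\mu(\m I) < 2\mu(I) \Leftrightarrow I$ is Burch.

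The main delicate point I expect to verify carefully is the explicit formula for $\phi$; once the identification of $\Tor_1^R(k,I)$ with $(I:\m)/I$ via $\bar h \mapsto x\otimes yh - y\otimes xh$ is in hand, both the cokernel computation and the vanishing criterion follow by routine bookkeeping.
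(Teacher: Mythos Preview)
Your proof is correct but takes a genuinely different route from the paper's. The paper's argument is a two-line application of the Hilbert--Burch theorem: since $R$ is regular of dimension two and $I$ is $\m$-primary, the minimal free resolution of $R/I$ has the shape $0\to R^{\mu(I)-1}\to R^{\mu(I)}\to R\to R/I\to0$, whence $r(R/I)=\mu(I)-1$, and likewise $r(R/\m I)=\mu(\m I)-1$; plugging these into Proposition~\ref{23}(4) (together with the inequality $r(R/\m I)\le r(R/I)+\mu(I)$ established in its proof) gives $\mu(\m I)<2\mu(I)\Leftrightarrow I$ Burch. Your approach instead builds the Koszul-based right exact sequence $(I:\m)/I\xrightarrow{\phi}\m/\m^2\otimes_k I/\m I\to\m I/\m^2 I\to0$, computes $\ker\phi=(\m I:\m)/I$ by hand, and then invokes Proposition~\ref{23}(2). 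The paper's route is shorter and exploits the structure theorem, while yours is more self-contained (it avoids Hilbert--Burch entirely) and in fact proves a bit more: it gives the exact formula $\mu(\m I)=2\mu(I)-\dim_k\bigl((I:\m)/(\m I:\m)\bigr)$, identifying the defect as Choi's invariant.
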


\begin{proof}
It follows from the Hilbert--Burch theorem that $\mu(I)=r(R/I)+1$ and $\mu(\m I)=r(R/\m I)+1$.
The assertion follows from the equivalence (1) $\Leftrightarrow$ (2) in Proposition \ref{23}.
\end{proof}

Now we can show the following theorem, which particularly gives a characterization of the Burch ideals of two-dimensional regular local rings in terms of minimal free resolutions.
Compare this theorem with the result of Corso, Huneke and Vasconcelos \cite[Lemma 3.6]{CHV}.

\begin{thm} \label{t63}
Let $(R,\m)$ be a regular local ring of dimension $d$.
Let $I$ be an $\m$-primary ideal of $R$.
Take a minimal free resolution $0 \to F_d \xrightarrow[]{\varphi_d} F_{d-1} \to \cdots \to F_1 \xrightarrow[]{\varphi_1} F_0 \to R/I \to 0$ of the $R$-module $R/I$.
Consider the following conditions.
\begin{enumerate}[\rm(1)]
\item The ideal $I$ is Burch.
\item There exist a regular system of parameters $x_1,\dots,x_d$ and an integer $r>0$ such that $\I_1(\varphi_d)=(x_1^r,x_2,\dots,x_d)$.
\item One has $(I:\m)^2\not=I(I:\m)$.
\end{enumerate}
Then the implication {\rm(1)} $\Rightarrow$ {\rm(2)} holds.
If $R$ contains a field, then the implication {\rm(3)} $\Rightarrow$ {\rm(2)} holds.
If $d=2$, then the implication {\rm(2)} $\Rightarrow$ {\rm(1)} holds as well.
\end{thm}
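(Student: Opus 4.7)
The plan is to handle the three implications via three different techniques, all rooted in analysis of the minimal free resolution of $R/I$.

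For $(1) \Rightarrow (2)$ in arbitrary $d$, I will work with the dual resolution. Specifically, $\I_1(\varphi_d) = \I_1(\varphi_d^{\ast})$, where $\varphi_d^{\ast}$ is the first map in the minimal free resolution of $\omega_{R/I} := \Ext_R^d(R/I, R)$, the canonical module of $R/I$. Since $R$ is regular and $R/I$ is artinian, $\omega_{R/I}$ is the Matlis dual of $R/I$, so $\mu(\omega_{R/I}) = r(R/I)$ and the socle of $\omega_{R/I}$ is $1$-dimensional. Localizing $\varphi_d$ at any non-maximal prime $\p$ makes $(R/I)_\p = 0$, so the localized complex splits and $\I_1(\varphi_d)$ is forced to be $\m$-primary. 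The heart of the argument is to use the Burch condition on $I$ to show that $(\I_1(\varphi_d) + \m^2)/\m^2$ has codimension at most $1$ in $\m/\m^2$; this provides $d - 1$ elements of $\I_1(\varphi_d)$ extending to a regular system of parameters $x_2, \dots, x_d$. The $1$-dimensional socle of $\omega_{R/I}$ is the crucial rigidity, and the argument will be modeled on the proof of Theorem~\ref{210} or carried out via Proposition~\ref{p1} applied to $\omega_{R/I}$. Once $x_2, \dots, x_d$ are obtained, the image of the $\m$-primary ideal $\I_1(\varphi_d)$ in the DVR $R/(x_2, \dots, x_d)$ is $(x_1^r)$ for some $r \ge 1$, and $\I_1(\varphi_d) = (x_1^r, x_2, \dots, x_d)$ follows directly. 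I expect the codimension-one step to be the main obstacle.

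For $(2) \Rightarrow (1)$ in dimension two, I will invoke Lemma~\ref{l62}: Burch amounts to $\mu(\m I) < 2\mu(I)$. Writing $n = \mu(I)$, the short exact sequence $0 \to \m I \to I \to k^n \to 0$ combined with the fact that $I \otimes_R k \to k^n \otimes_R k$ is an isomorphism collapses the long exact sequence in $\Tor_\ast^R(-, k)$ to an exact sequence $\Tor_1^R(I, k) \to \Tor_1^R(k^n, k) \twoheadrightarrow \m I \otimes_R k$. Hence $\mu(\m I) = 2n - \dim_k \im[\Tor_1^R(I, k) \to \Tor_1^R(k^n, k)]$. A direct unwinding through minimal free resolutions shows that this map sends each column of $\varphi_2$ to the tuple of $k$-coefficients of the linear parts of its entries, so its image is nonzero precisely when $\I_1(\varphi_2) \not\subseteq \m^2$. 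Condition~(2) immediately yields $x_2 \in \I_1(\varphi_2) \setminus \m^2$, so $I$ is Burch.

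For $(3) \Rightarrow (2)$ under the equicharacteristic hypothesis, I will fix a coefficient field $k \hookrightarrow R$ so that $R \cong k[[x_1, \dots, x_d]]$ admits the $k$-linear partial derivatives $\partial_i := \partial/\partial x_i$. From the $\tau_1, \tau_2 \in (I:\m)$ with $\tau_1 \tau_2 \notin I(I:\m)$ given by (3), the Leibniz rule $\partial_i(\tau_1 \tau_2) = (\partial_i \tau_1)\tau_2 + \tau_1(\partial_i \tau_2)$ is meant to transfer the quadratic socle-product condition into a linear witness: some $k$-linear combination of the partial derivatives of $\tau_1 \tau_2$ should yield an element of $(\m \setminus \m^2) \cdot (I:\m)$ lying outside $\m I$, which is precisely the Burch condition, whereupon the first part completes the argument. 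The hardest part is that derivations do not preserve ideals, so controlling membership of the derivatives in $(I:\m)$ is delicate; the strictness $\tau_1 \tau_2 \notin I(I:\m)$---stronger than the mere Burch condition $\notin \m I$---is expected to supply the needed rigidity, and the equicharacteristic hypothesis is essential for the existence of $k$-linear derivations.
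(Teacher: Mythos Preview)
Your argument for $(2)\Rightarrow(1)$ when $d=2$ is correct and essentially identical to the paper's: both reduce to the observation that $\mu(\m I)<2\mu(I)$ holds exactly when some entry of $\varphi_2$ lies outside $\m^2$, then invoke Lemma~\ref{l62}. The paper phrases this via the exact sequence $0\to F_2\to\m F_1\to\m I\to0$ and the induced map $F_2/\m F_2\to\m F_1/\m^2 F_1$, which is your $\Tor$ map in disguise.

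For $(1)\Rightarrow(2)$, your dual-resolution outline is plausible but the decisive codimension-one step is not actually carried out; you acknowledge it as the main obstacle and defer to Proposition~\ref{p1} or Theorem~\ref{210} without explaining how either applies to the presentation of $\omega_{R/I}$. The paper's route is more direct and avoids the canonical module entirely: Lemma~\ref{31} produces a regular element $x_1\in\m\setminus\m^2$ with $k$ a direct summand of $I/x_1I$; reducing the resolution modulo $x_1$ then exhibits the Koszul complex $\k^{R/(x_1)}$ as a direct summand, forcing $\m/(x_1)\subseteq\I_1(\varphi_d\otimes R/(x_1))$ and hence yielding $x_2,\dots,x_d$.

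Your plan for $(3)\Rightarrow(2)$ contains a fatal error. You aim to prove $(3)\Rightarrow(1)$ via derivations and then invoke $(1)\Rightarrow(2)$, but $(3)\Rightarrow(1)$ is \emph{false} in general: the paper exhibits $I=(x^4,y^4,z^4,x^2y,y^2z,z^2x)$ in $k[\![x,y,z]\!]$, which satisfies $(I:\m)^2\ne I(I:\m)$ yet is not Burch. So no manipulation of the Leibniz identity can produce the Burch witness you seek. The paper proves $(3)\Rightarrow(2)$ by a completely different mechanism: assuming (2) fails, one finds an ideal $L=(x_1^2,x_1x_2,x_2^2,x_3,\dots,x_d)\supseteq\I_1(\varphi_d)$, invokes the duality isomorphisms $(I:L)/I\cong\omega_{R/L}\otimes F_d$ and $(I:\m)/I\cong\omega_{R/\m}\otimes F_d$ of Corso--Goto--Huneke--Polini--Ulrich, and applies a structural result from \cite{CGHPU} on the generators of $\omega_{R/L}$ to force $(I:\m)^2=I(I:\m)$. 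The equicharacteristic hypothesis is used there to realize $\omega_{R/L}$ concretely inside an inverse-polynomial injective hull, not to define derivations.
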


\begin{proof}
We first show that (1) implies (2).
We may assume $d\ge 2$, so that $R$ has depth greater than $1$.
By Lemma \ref{31} and its proof, there is a non-zerodivisor $x_1\in \m\setminus \m^2$ such that $I/x_1I$ contains the residue field $R/\m$ as a direct summand.
Tensoring $R/(x)$ with the complex $F=(0\to F_d\to\cdots\to F_0\to0)$, we get a minimal free resolution
$$
(0 \to F_d/x_1F_d \xrightarrow[]{\varphi_d\otimes S/(x_1)} F_{d-1}/F_{d-1} \to \cdots \to F_2/x_1F_2 \to F_1/x_1F_1\to0)
$$
of $I/x_1I$ over $R/(x_1)$.
As $R/\m$ is a direct summand of $I/x_1I$, a minimal $R/(x_1)$-free resolution $G$ of $R/\m$ is a direct summand of the above complex.
Since $G$ is isomorphic to the Koszul complex $\k^{R/(x_1)}$ of $R/(x_1)$, the ideal $\I_1(\varphi_d\otimes R/(x_1))$ of $R/(x_1)$ contains the maximal ideal $\m/(x_1)$.
Therefore $\I_1(\varphi_d)$ contains elements $x_2,\dots,x_d$ such that $x_1,x_2,\dots,x_d$ form a regular system of parameters of $R$.
Since the radical of $\I_1(\varphi_d)$ contains $I$, it is an $\m$-primary ideal.
It follows that there is an integer $r>0$ such that $x_1^r\in \I_1(\varphi_d)$ but $x_1^{r-1}\not\in \I_1(\varphi_d)$.
We obtain $\I_1(\varphi_d)=(x_1^r,x_2,\dots,x_d)$, and (2) follows.

Next, under the assumption that $R$ contains a field, we prove that (3) implies (2).
We use an analogue of the proof of \cite[Theorem 2.4]{CGHPU}.
After completion, we may assume that $R$ is a formal power series ring over a field $k$.
Suppose that (2) does not hold.
Then $d\ge 2$ and we can take an ideal $L$ containing $\I_1(\varphi_d)$ such that there is a regular system of parameters $x_1,\dots,x_d$ with $L=(x_1^2,x_1x_2,x_2^2,x_3,\dots,x_d)$.
By \cite[Proposition 2.1]{CGHPU}, an isomorphism $(I:L)/I\cong\omega_{R/L}\otimes_R F_d$ and its retraction $(I:\m)/I\cong \omega_{R/\m}\otimes_R F_d$ are given.
Note that the canonical module $\omega_{R/L}$ of $R/L$ is isomorphic to $(0:_{\E_R(k)}L)$.
The module $\E_R(k)$ is identified with $k[x_1,x_1^{-1},\dots,x_d^{-1}]/N$, where $N$ is the subspace spaned by the monomials not in $k[x_1^{-1},\dots,x_d^{-1}]$.
Under this identification, $\omega_{R/L}=(0:L)$ is generated by the monomials $x_1^{-1}$ and $x_2^{-1}$.
Set $M=\{x_1^{-1},x_2^{-1}\}$.
Then $x_1M=\{1\}=x_2M$ generates $\omega_{R/\m}$.
Also, either $x_1w=0$ or $x_2w=0$ holds for all $w\in M$.
We may apply \cite[Proposition 2.3]{CGHPU} as in the proof of \cite[Theorem 2.4]{CGHPU} to get $(I:\m)^2=I(I:\m)$, contrary to (3).
We have shown that (3) implies (2).

Finally, assuming $d=2$, we prove (2) implies (1).
As the entries of $\varphi_2$ are contained in $\m$, we have an exact sequence $0 \to F_2 \xrightarrow[]{\varphi_2} \m F_{1} \to \m I \to 0$.
This induces an exact sequence $F_2/\m F_2 \xrightarrow[]{\varphi_2\otimes_R R/\m} \m F_1/\m^2 F_1 \to \m I/\m^2 I \to 0$.
Suppose that (2) holds.
Then $\varphi_2\otimes_R R/\m\not=0$, and $\dim_{R/\m} (\m I/\m^2 I)< \dim_{R/\m} (\m F_1/\m^2 F_1)$.
Note that $\dim_{R/\m} (\m I/\m^2 I)=\mu(I)$ and $\dim_{R/\m} (\m F_1/\m^2 F_1)=2\mu(I)$.
Lemma \ref{l62} shows that $I$ is a Burch ideal, that is, (1) holds.
\end{proof}

\begin{ex}
\begin{enumerate}[(1)]
\item
Let $I=(x^4,y^4,z^4,x^2y,y^2z,z^2x)$ be an ideal of $(R,\m)=k[\![x,y,z]\!]$.
Then one can check that $(I:\m)=(x^4,x^3z,x^2y,xy^3,xyz,xz^2,y^4,y^2z,yz^3,z^4)$, and so $(I:\m)^2\not=I(I:\m)$.
However, $I$ is not Burch.
This gives a counterexample of the implication (3) $\Rightarrow$ (1) in Theorem \ref{t63}.
\item
Let $I=(x^4,y^4,x^3y,xy^3)$ be an ideal of $(R,\m)=k[\![x,y]\!]$.
Then $(I:\m)=(x^3,x^2y^2,y^3)$.
We see that $(I:\m)^2=I(I:\m)$ and $I$ is Burch.
This shows that the implication (1) $\Rightarrow$ (3) in Theorem \ref{t63} is not affirmative, even when $R$ has dimension two.
\end{enumerate}
\end{ex}

We provide some characterizations of Burchness for monomial ideals of regular local rings.

\begin{prop}\label{4}
Let $(R,\m)$ be a regular local ring of dimension $d$.
Let $x_1,\dots,x_d$ be a regular system of parameters of $R$, and let $I$ be a monomial ideal (in the $x_i$s) of $R$.
Then $I$ is Burch if and only if there exist a monomial $m\in I\setminus \m I$ and an integer $1\le i\le d$ such that $x_i\mid m$ and $m(x_j/x_i) \in I$ for all $1\le j\le d$.
\end{prop}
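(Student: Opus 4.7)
\medskip

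The plan is to exploit the fact that for monomial ideals everything relevant is again monomial, so Burchness can be detected on monomial generators. Write $\m=(x_1,\dots,x_d)$ and recall $I$ Burch means $\m I \subsetneq \m(I:\m)$.

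First I would observe three monomiality facts: (a) $(I:\m)=\bigcap_{j=1}^d(I:x_j)$ is a monomial ideal, since a colon of a monomial ideal by a monomial is monomial; (b) consequently $\m(I:\m)$ is monomial; (c) $\m I$ is monomial. In particular, $\m(I:\m)\not\subseteq \m I$ if and only if some monomial generator of $\m(I:\m)$ fails to lie in $\m I$, and every monomial generator of $\m(I:\m)$ has the form $x_ib$ for some $1\le i\le d$ and some monomial generator $b$ of $(I:\m)$.

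For the ``if'' direction, given $m\in I\setminus\m I$ and $i$ with $x_i\mid m$ and $m(x_j/x_i)\in I$ for every $j$, set $b=m/x_i$. Then $bx_j\in I$ for all $j$, so $b\in (I:\m)$, hence $m=x_ib\in\m(I:\m)$. Since $m\notin\m I$ by hypothesis, we get $\m I\ne\m(I:\m)$, so $I$ is Burch.

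For the ``only if'' direction, suppose $I$ is Burch. By the monomiality observations above, pick a monomial $m\in\m(I:\m)\setminus\m I$ of the form $m=x_ib$ with $b$ a monomial in $(I:\m)$. Then $x_i\mid m$, and for every $j$ we have $m(x_j/x_i)=bx_j\in I$ because $b\in (I:\m)$; in particular (taking $j=i$) also $m=bx_i\in I$. Together with $m\notin\m I$, this yields exactly the stated condition.

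There is essentially no obstacle beyond checking that colons, products, and containments of monomial ideals reduce to statements on individual monomials; this holds in a regular local ring with a chosen regular system of parameters because the monomials in $x_1,\dots,x_d$ form a $k$-basis of the associated graded ring and the arguments only use this combinatorial structure (and are unaffected by whether $R$ is local or complete).
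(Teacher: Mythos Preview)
Your argument is correct and follows the same idea as the paper's proof: write the witness to $\m(I:\m)\ne\m I$ as $m=x_ib$ with $b$ a monomial in $(I:\m)$, and conversely recover $b=m/x_i$ from the stated monomial condition. The paper's proof is terser and in fact only writes out the ``only if'' direction explicitly; your version is more complete in that you also supply the (easy) ``if'' direction and spell out why the relevant ideals are monomial so that the witness can be chosen to be a monomial.
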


\begin{proof}
Since $I$ is a Burch ideal, we have $\m I\not=\m (I:\m)$.
Therefore, there is a monomial $m'\in (I:\m)$ and an integer $i$ such that $x_im'\not\in \m I$.
It also holds that $x_jm'\in I$ for all $j=1,\dots,d$.
So the element $m:=x_im'$ satisfies $m(x_j/x_i) \in I$ for all $j=1,\dots,d$.
\end{proof}

\begin{cor}
Let $(R,\m)$ be a regular local ring of dimension $2$ with a regular system of parameters $x,y$.
Let $I=(x^{a_1}y^{b_1},x^{a_2}y^{b_2},\dots,x^{a_n}y^{b_n})$ be a monomial ideal with $a_1>a_2>\cdots>a_n$ and $b_1<b_2<\cdots<b_n$.
Then $I$ is a Burch ideal of $R$ if and only if $a_i=a_{i+1}+1$ or $b_i=b_{i+1}-1$ for some $i=1,\dots,n$.
\end{cor}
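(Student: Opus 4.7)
The plan is to apply Proposition \ref{4}, which in this two-variable setting says that $I$ is Burch if and only if there is a monomial $m\in I\setminus\m I$ and a variable $x_j\in\{x,y\}$ such that $x_j\mid m$ and $m\cdot(x_{j'}/x_j)\in I$, where $x_{j'}$ is the other variable. A basic observation used throughout is that any monomial $m\in I\setminus\m I$ must be a minimal monomial generator of $I$: if $m\in I$ is a monomial and $m$ is not one of the $m_k:=x^{a_k}y^{b_k}$, then some $m_k$ divides $m$ strictly, forcing $m\in\m I$. Since the exponent sequences $a_i$ and $b_i$ are strictly monotone in opposite directions, no $m_k$ divides another, so $\{m_1,\dots,m_n\}$ is indeed the minimal generating set and the candidates for $m$ are exactly $m_1,\dots,m_n$.

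For the ``if'' direction, suppose first that $b_{i+1}=b_i+1$ for some $i<n$. Take $m:=m_i$; then $a_i>a_{i+1}\ge 0$ gives $x\mid m_i$, and $m_i\cdot(y/x)=x^{a_i-1}y^{b_i+1}$ is divisible by $m_{i+1}=x^{a_{i+1}}y^{b_i+1}$ (using $a_{i+1}\le a_i-1$), so it lies in $I$. By Proposition \ref{4}, $I$ is Burch. If instead $a_i=a_{i+1}+1$ for some $i<n$, take $m:=m_{i+1}$; then $y\mid m_{i+1}$ since $b_{i+1}>b_1\ge 0$, and $m_{i+1}\cdot(x/y)=x^{a_i}y^{b_{i+1}-1}$ is divisible by $m_i$ (using $b_i\le b_{i+1}-1$), so again Proposition \ref{4} applies.

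For the ``only if'' direction, assume $I$ is Burch and use Proposition \ref{4} to produce $m$ and $x_j$. By the observation above, $m=m_k$ for some $k$. If $x\mid m_k$ and $m_k\cdot(y/x)=x^{a_k-1}y^{b_k+1}\in I$, choose a generator $m_\ell$ dividing it, so $a_\ell\le a_k-1$ and $b_\ell\le b_k+1$. Strict monotonicity of $(a_i)$ forces $\ell>k$, hence $b_\ell>b_k$, hence $b_\ell=b_k+1$. If moreover $\ell>k+1$, then $b_{k+1}<b_\ell=b_k+1$ contradicts $b_{k+1}>b_k$; thus $\ell=k+1$ and $b_{k+1}=b_k+1$. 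The symmetric case $y\mid m_k$ and $m_k\cdot(x/y)\in I$ is handled in the same way and yields $a_{k-1}=a_k+1$, i.e.\ $a_{i}=a_{i+1}+1$ for $i=k-1$.

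There is no real obstacle here; the only mildly delicate point is verifying that in each direction the divisor of the shifted monomial must be \emph{adjacent} to $m_k$ in the indexing, which is what converts the Burch property into the stated condition on consecutive exponents. This uses in an essential way that both exponent sequences $(a_i)$ and $(b_i)$ are strictly monotone.
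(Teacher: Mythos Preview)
Your proof is correct and follows essentially the same route as the paper: both arguments reduce the question to Proposition \ref{4} and then analyze which minimal generator $m_k$ and which variable can witness the condition, using the strict monotonicity of the exponent sequences to force adjacency. Your write-up is simply more explicit about why the monomial $m$ must be a minimal generator and about the divisibility checks at the boundary indices, points the paper leaves implicit.
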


\begin{proof}
By Proposition \ref{4}, the ideal $I$ is Burch if and only if $x^{a_i}y^{b_i}(y/x)\in I$ or $x^{a_i}y^{b_i}(x/y)\in I$ for some $i=1,\dots, n$.
Equivalently, either $x^{a_i-1}y^{b_i+1}\in I$ or $x^{a_i+1}y^{b_i-1}\in I$ holds for some $i=1,\dots, n$.
Since $a_{i+1}\le a_i-1<a_i<a_i+1\le a_{i-1}$ and $b_{i-1}\le b_i-1<b_i<b_i+1\le b_{i+1}$, the condition is equivalent to saying that $b_i+1=b_{i+1}$ or $a_i+1=a_{i-1}$ for some $i=1,\dots, n$.
\end{proof}

Next, we discuss the relationship between Burch rings and several classes of rings studied previously in the literature. 

Recall that the {\em trace ideal} $\tr M$ of an $R$-module $M$ is defined by $\tr M=\sum_{f\in \Hom_R(M,R)}\im f$.
The following notions are introduced in \cite{HHS,SV}.

\begin{dfn}[Herzog--Hibi--Stamate]
Let $(R,\m)$ be a Cohen--Macaulay local ring with canonical module $\omega$.
Then $R$ is called {\em nearly Gorenstein} if $\tr\omega$ contains $\m$.
\end{dfn}

\begin{dfn}[Striuli--Vraciu]
Let $(R,\m)$ be an artinian local ring.
Then $R$ is called {\em almost Gorenstein}\footnote{There is another notion of an almost Gorenstein ring; see \cite{almgor}.} if $(0:(0:I))\subseteq(I:\m)$ for all ideals $I$ of $R$.
\end{dfn}

It follows from \cite[Proposition 1.1]{HV} that artinian nearly Gorenstein local rings are almost Gorenstein.

We want to consider the relationship of Burchness with near Gorensteinness and almost Gorensteinness.
For this, we establish two lemmas.

\begin{lem}\label{26}
Let $(R,\m,k)$ be a non-Gorenstein artinian almost Gorenstein local ring.
Let $R^n \xrightarrow{A}R^m \to E \to 0$ be a minimal $R$-free presentation of the $R$-module $E=\E_R(k)$.
One then has $\I_1(A)=\m$.
\end{lem}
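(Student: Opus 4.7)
The plan is by contradiction. Suppose $J := \I_1(A) \subsetneq \m$, pick $a \in \m \setminus J$, and aim to violate the almost Gorenstein condition on $I = aR$. Two preparatory ingredients are used. First, Matlis duality applied to $E \xrightarrow{a\cdot} E \to E/aE \to 0$, combined with $\Hom_R(E, E) = R$, gives $\Hom_R(E/aE, E) \cong (0 :_R a)$ and hence $\ann_R(E/aE) = (0 :_R (0 :_R a))$. Second, the minimal presentation forces $(0 :_R (0 :_R a)) \subseteq aR + J$: for any $s$ with $sE \subseteq aE$, write $s y_i = a \sum_j c_{ij} y_j$ for each minimal generator $y_i$ of $E$; then the syzygy $s e_i - a \sum_j c_{ij} e_j \in \ker(R^m \twoheadrightarrow E)$ has $i$-th coordinate $s - a c_{ii}$ lying in $\I_1(A) = J$.

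The almost Gorenstein hypothesis on $aR$ then gives $(0 :_R (0 :_R a)) \subseteq (aR :_R \m)$, so combining with the presentation bound, every $s \in (0 :_R (0 :_R a))$ decomposes as $s = aq + j$ with $j \in J$ satisfying $j \m \subseteq aR$. The decisive step is to exhibit an element of $(0 :_R (0 :_R a))$ whose $j$-part cannot lie in $aR + \soc R$, contradicting the assumption $a \notin J$. For this, I would exploit that $(0 :_R (0 :_R a))$ is the Matlis-reflexive hull of $aR$ and strictly exceeds $aR$ in the non-Gorenstein artinian case; the ``extra'' elements should be traceable to the minimal generators $y_i$ of $E$, whose annihilators all lie in $J$, and combining with the almost Gorenstein inequality should produce an element of $J\cdot\m$ escaping $aR$ unless $J = \m$.

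I expect the main obstacle to be this final construction, which must use minimality of the presentation $A$, the almost Gorenstein inequality, and the non-Gorenstein hypothesis $\mu_R(E) \geq 2$ simultaneously to pin down each generator of $\m$ inside $\I_1(A)$.
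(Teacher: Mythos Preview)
Your preparatory steps are correct: the Matlis-duality computation $\ann_R(E/aE)=(0:(0:a))$ holds, and the presentation argument does give $(0:(0:a))\subseteq aR+J$. Combining with the almost Gorenstein inequality for the principal ideal $aR$ yields, as you write, that every $s\in(0:(0:a))$ has the form $s=aq+j$ with $j\in J\cap(aR:\m)$. But here the argument stalls, and you essentially acknowledge this. The difficulty is structural: both facts you have derived are \emph{upper bounds} on $(0:(0:a))$, so there is no tension between them. To obtain a contradiction you would need a \emph{lower bound} forcing $(0:(0:a))$ to contain elements outside $aR+J$, and nothing in your setup supplies one. Your remark that $(0:(0:a))\supsetneq aR$ in the non-Gorenstein case, even when true, does not help: the extra elements may perfectly well lie in $J$, so the containment $(0:(0:a))\subseteq aR+J$ is not violated. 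Testing the almost Gorenstein condition on a single principal ideal $aR$ is simply too little information; the hypothesis quantifies over \emph{all} ideals, and the proof must exploit this.

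The paper's argument is direct rather than by contradiction. One chooses an artinian Gorenstein cover $(S,\n)$ with $R\cong S/I$ and identifies $E$ with the ideal $(0:_SI)\subseteq S$. The key input is a lemma of Striuli--Vraciu: for a minimal generating set $x_1,\dots,x_m$ of $E$ one has
\[
\n=\bigl((x_1):_S(x_2,\dots,x_m)\bigr)+\bigl((x_2,\dots,x_m):_Sx_1\bigr).
\]
This equality, whose proof uses the almost Gorenstein property applied to several non-principal ideals, immediately produces a matrix $B$ of syzygies of the $x_i$ over $S$ with $\I_1(B)=\n$. Reducing modulo $I$ and using that the columns of $A$ generate the first syzygy of $E$ over $R$, one factors $\overline{B}=AC$ and reads off $\m=\I_1(\overline{B})\subseteq\I_1(A)\subseteq\m$. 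The Striuli--Vraciu lemma is precisely the missing bridge your approach lacks: it converts the almost Gorenstein hypothesis into an explicit statement about syzygy entries of $E$.
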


\begin{proof}
Choose an artinian Gorenstein local ring $(S,\n)$ and an ideal $I$ of $S$ such that $R\cong S/I$.
We identify $E$ with $(0:_SI)$ via the isomorphisms $E\cong\Hom_S(R,S)\cong(0:_SI)$.
Let $x_1,\dots,x_m$ be a minimal system of generators of $E$.
By \cite[Lemma 1.2]{SV} we have $\n=((x_1):_S(x_2,\dots,x_m))+((x_2,\dots,x_m):_Sx_1)$.
We find a matrix $B$ over $S$ with $m$ rows such that $\I_1(B)=\n$ and $\left(\begin{smallmatrix}
x_1&\cdots&x_m
\end{smallmatrix}\right)B=0$.
We find a matrix $C$ over $R$ such that the matrix $\overline B$ over $R$ corresponding to $B$ is equal to $AC$.
We have $\m=\I_1(\overline B)=\I_1(A\cdot C)\subseteq\I_1(A)\subseteq\m$, which implies $\I_1(A)=\m$.
\end{proof}

\begin{lem} \label{l68}
Let $(R,\m)$ be a regular local ring of dimension $d$, and let $I\subseteq \m^2$ be an ideal of $R$.
Take a minimal free resolution $0 \to F_d \xrightarrow{\varphi_d} F_{d-1} \to \cdots \to F_1 \xrightarrow{\varphi_1} F_0 \to R/I \to 0$ of the $R$-module $R/I$.
If $R/I$ is artinian, non-Gorenstein and almost Gorenstein, then $\I_1(\varphi_d)=\m$.
\end{lem}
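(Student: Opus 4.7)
The plan is to extract, from the given minimal free resolution of $R/I$, a free presentation over $R/I$ of the canonical module of $R/I$, and then to invoke Lemma~\ref{26}. Since $R$ is regular of dimension $d$ and $R/I$ is artinian (so $\pd_R(R/I)=d$), local duality gives $\Ext^i_R(R/I,R)=0$ for $i\ne d$ and $\Ext^d_R(R/I,R)\cong \E_{R/I}(k)=:E$, the canonical module of $R/I$. Applying $\Hom_R(-,R)$ to the given resolution therefore produces the exact sequence
\[
F_{d-1}^{*}\xrightarrow{\varphi_d^{*}} F_d^{*}\to E\to 0
\]
of $R$-modules, in which the entries of $\varphi_d^{*}$ are those of $\varphi_d$ (up to transposition) and hence lie in $\m$.

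Next I would tensor with $R/I$. Since $I$ annihilates $E$, $E\otimes_R R/I\cong E$, and right-exactness yields an exact sequence
\[
F_{d-1}^{*}/IF_{d-1}^{*}\xrightarrow{\overline{\varphi_d^{*}}} F_d^{*}/IF_d^{*}\to E\to 0
\]
of $R/I$-modules. The rank of $F_d^{*}/IF_d^{*}$ over $R/I$ equals $\rank_R F_d^{*}=\beta_d^R(R/I)=\mu_R(E)=\mu_{R/I}(E)$, so this is a minimal free cover of $E$ as an $R/I$-module, and the entries of $\overline{\varphi_d^{*}}$ lie in $\m/I$. Since $R/I$ is artinian, non-Gorenstein and almost Gorenstein, Lemma~\ref{26} applies to this presentation and yields $\I_1(\overline{\varphi_d^{*}})=\m/I$, equivalently $\I_1(\varphi_d^{*})+I=\m$.

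To close, I note that $\I_1$ of a matrix is invariant under transposition, so $\I_1(\varphi_d^{*})=\I_1(\varphi_d)$, whence $\I_1(\varphi_d)+I=\m$. The hypothesis $I\subseteq\m^2$ gives $\I_1(\varphi_d)+\m^2=\m$, and Nakayama's lemma applied to the finitely generated module $\m/\I_1(\varphi_d)$ then yields $\I_1(\varphi_d)=\m$, as required.

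The main subtle point is the step where Lemma~\ref{26} is invoked: strictly speaking, the presentation $\overline{\varphi_d^{*}}$ may not be minimal on the left, since $F_{d-1}^{*}/IF_{d-1}^{*}$ could have rank larger than $\beta_1^{R/I}(E)$. This will not be an obstacle, however, because the ideal generated by the entries of a presentation matrix with a minimal free cover on the right depends only on the image submodule of the cover (here $\syz_{R/I}E$), and so agrees with $\I_1$ computed from the minimal presentation; equivalently, the proof of Lemma~\ref{26} itself uses only the minimality of the right-hand free module. All remaining steps (local duality, right-exactness of tensor, Nakayama) are routine.
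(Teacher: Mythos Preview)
Your proof is correct and follows essentially the same route as the paper's: dualize the minimal free resolution to obtain an $R/I$-free presentation of the injective hull $E=\E_{R/I}(k)$, apply Lemma~\ref{26} to conclude $\I_1(\varphi_d)+I=\m$, and finish with Nakayama using $I\subseteq\m^2$. Your extra remark on the (non-)minimality of the left-hand free module in the presentation is a valid observation and your resolution of it is correct; the paper simply asserts the presentation is minimal without addressing this point.
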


\begin{proof}
Set $A=R/I$ and $E=\E_A(k)$.
Then the sequence $(F_{d-1}/IF_{d-1})^\ast \xrightarrow[]{(\varphi_d\otimes A)^\ast} (F_d/IF_d)^\ast \to E \to 0$ gives a minimal $A$-free presentation of $E$, where $(-)^\ast=\Hom_A(-,A)$.
Note that $\rank_A (F_d/IF_d)^\ast=r(A)=\mu(E)$.
Lemma \ref{26} implies $\I_1((\varphi_d\otimes A)^\ast)=\m$, which shows $\I_1(\varphi_d)+I=\m$.
The desired result follows from Nakayama's lemma.
\end{proof}

We can show an artinian almost Gorenstein local ring of embedding dimension two is Burch.

\begin{prop} \label{c68}
Let $(R,\m)$ be a regular local ring of dimension $2$ and $I$ an ideal of $R$.
Assume that $R/I$ is a non-Gorenstein artinian almost Gorenstein ring.
Then $I$ is a Burch ideal of $R$.
\end{prop}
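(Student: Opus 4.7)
The plan is to reduce this to a direct application of Lemma \ref{l68} combined with the implication (2) $\Rightarrow$ (1) of Theorem \ref{t63}.

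First I would verify the hypothesis $I\subseteq\m^2$ needed to invoke Lemma \ref{l68}. If this were to fail, then $I$ would contain some $x\in\m\setminus\m^2$, which is a minimal generator of $\m$ and hence part of a regular system of parameters of the two-dimensional regular local ring $R$. Then $R/I$ would be a quotient of the discrete valuation ring $R/(x)$, so $R/I$ would be a quotient of a principal ideal domain by a power of the maximal ideal, hence a complete intersection and in particular Gorenstein. This contradicts the hypothesis that $R/I$ is non-Gorenstein, so indeed $I\subseteq\m^2$.

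Next, take a minimal free resolution $0\to F_2\xrightarrow{\varphi_2}F_1\xrightarrow{\varphi_1}F_0\to R/I\to0$ of the $R$-module $R/I$ (which has length two because $R$ is a two-dimensional regular local ring). Since $R/I$ is artinian, non-Gorenstein, and almost Gorenstein, Lemma \ref{l68} applies and yields $\I_1(\varphi_2)=\m$. Now choose any regular system of parameters $x_1,x_2$ of $R$; then $\m=(x_1,x_2)=(x_1^1,x_2)$, which is exactly condition (2) of Theorem \ref{t63} with $r=1$.

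Finally, since $d=2$, Theorem \ref{t63} provides the implication (2) $\Rightarrow$ (1), which gives that $I$ is a Burch ideal of $R$. The argument is essentially a bookkeeping one, with all the technical work already carried out in the two cited results; no step presents a real obstacle beyond the small preliminary reduction to $I\subseteq\m^2$ that legalizes the use of Lemma \ref{l68}.
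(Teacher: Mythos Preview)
Your proof is correct and follows essentially the same route as the paper: apply Lemma \ref{l68} to obtain $\I_1(\varphi_2)=\m$, then use the implication (2) $\Rightarrow$ (1) of Theorem \ref{t63} in dimension two. The only difference is that you supply the small preliminary check $I\subseteq\m^2$ needed to legally invoke Lemma \ref{l68}, which the paper's proof leaves implicit.
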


\begin{proof}
Take a minimal free resolution $0 \to F_2 \xrightarrow{\varphi_2} F_1 \xrightarrow{\varphi_1} F_0 \to R/I \to 0$ of the $R$-module $R/I$.
It follows from Lemma \ref{l68} that $I_1(\varphi_2)=\m$.
Since $R$ has dimension two, we can use the implication (2)$\Rightarrow$(1) in Theorem \ref{t63} to have that $I$ is Burch.
\end{proof}

\begin{rem}\label{3}
One may hope a non-Gorenstein nearly Gorenstein local ring is Burch, but this is not necessarily true.
Indeed, let $(R,\m)$ be a $1$-dimensional nearly Gorenstein local ring (e.g. $R=k[\![t^3,t^4,t^5]\!]\subseteq k[\![t]\!]$ with $k$ a field).
Take a regular element $x\in\m^2$, and set $A=R/(x)$.
Then $A$ is nearly Gorenstein by \cite[Proposition 2.3(b)]{HHS}, but $A$ is not a Burch ring by Corollary \ref{r4}.
\end{rem}

Next, we deal with local rings the cube of whose maximal ideal is zero.
The following gives a characterization of Burchness for such rings.

\begin{thm} \label{cu}
Let $(R,\m,k)$ be a local ring with $\m^3=0$.
Then $R$ is a Burch ring if and only if there is an inequality $\beta_2(k)>(\edim R)^2-r(R)$.
\end{thm}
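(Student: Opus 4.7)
My plan is to combine Theorem~\ref{210} with a short length calculation in $\syz^2_Rk$, exploiting the strong restriction $\m^3=0$. If $R$ is a field the claim is immediate ($\beta_2(k)=0>-1=e^2-r$ and a field is Burch by Example~\ref{e310}), so I may assume $\m\ne0$. Since $\m^3=0$ forces $\depth R=0$, Theorem~\ref{210} says $R$ is Burch if and only if $k$ is a direct summand of $\syz^2_Rk$, equivalently $\soc(\syz^2_Rk)\not\subseteq\m\cdot\syz^2_Rk$.

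Set $e=\edim R$, $r=r(R)$ and $s=\dim_k\m^2$. Since $\m\cdot\m^2=\m^3=0$ we have $\m^2\subseteq\soc R$, hence $s\le r$. The exact sequence $0\to\syz^2_Rk\to R^e\to\m\to0$ together with $\ell(R)=1+e+s$ gives $\dim_k\syz^2_Rk=e(1+e+s)-(e+s)=e^2+s(e-1)$. Any tuple $(v_1,\dots,v_e)\in R^e$ annihilated by $\m$ has each $v_j\in\soc R$, and automatically lies in $\syz^2_Rk$ because $\sum_j x_jv_j\in\m\cdot\soc R=0$; thus $\soc(\syz^2_Rk)=(\soc R)^e$ has $k$-dimension $re$. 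Reducing a relation $\sum_j x_jr_j=0$ modulo $\m^2$ shows $\syz^2_Rk\subseteq\m R^e$, so $\m\cdot\syz^2_Rk\subseteq\m^2R^e\subseteq(\soc R)^e$; combined with the obvious inclusion $\m\cdot\syz^2_Rk\subseteq\syz^2_Rk$ this yields $\m\cdot\syz^2_Rk\subseteq\soc(\syz^2_Rk)$. The Burch condition is therefore equivalent to $\dim_k(\m\cdot\syz^2_Rk)<re$, and using $\beta_2(k)=\dim_k\syz^2_Rk-\dim_k\m\cdot\syz^2_Rk$ this becomes
\[
\beta_2(k)\;>\;e^2+s(e-1)-re\;=\;(e^2-r)+(s-r)(e-1).
\]

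The final step is to reconcile this with the advertised threshold $e^2-r$. The correction $(s-r)(e-1)$ is nonpositive since $s\le r$, with equality exactly when $r=s$ or $e\le1$; in those cases the two inequalities coincide. In the remaining case $r>s$ and $e\ge2$, the inclusion $\m\cdot\syz^2_Rk\subseteq\m^2R^e$ forces $\dim_k\m\cdot\syz^2_Rk\le se<re$, so $R$ is automatically Burch; simultaneously $\beta_2(k)\ge e^2+s(e-1)-se=e^2-s>e^2-r$, so the theorem's inequality holds unconditionally. Either way the two conditions are equivalent. The main subtlety is precisely this case analysis: the direct syzygy calculation produces a threshold that still involves $s$, and one must invoke the secondary bound $\dim_k\m\syz^2_Rk\le se$ to collapse the statement into the clean form $\beta_2(k)>(\edim R)^2-r(R)$.
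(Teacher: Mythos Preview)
Your proof is correct and follows essentially the same route as the paper's: reduce via Theorem~\ref{210} to the condition $\soc(\syz^2_Rk)\not\subseteq\m\syz^2_Rk$, observe that $\m\syz^2_Rk\subseteq\soc(\syz^2_Rk)=(\soc R)^e$ so the question becomes a comparison of lengths, compute $\beta_2(k)=e^2+(e-1)s-\dim_k\m\syz^2_Rk$, and then split into the cases $s=r$ and $s<r$ using the bound $\dim_k\m\syz^2_Rk\le se$. The paper presents the same computation and the same case analysis with $\ell(\m^2)$ in place of your $s$; your rearrangement of the threshold as $(e^2-r)+(s-r)(e-1)$ is cosmetic.
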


\begin{proof}
Put $e=\edim R$ and $r=r(R)$.
By Theorem \ref{210}, the ring $R$ is Burch if and only if $k$ is a direct summand of $\syz^2 k$, if and only if $\soc \syz^2 k\not\subseteq \m\syz^2 k$.
There is a short exact sequence $0 \to \syz^2 k \to R^e \to \m \to 0$, which gives an inclusion $\syz^2 k\subseteq \m R^e$ and an equality $\soc \syz^2 k=\soc R^e$.
Since $\m^3=0$, we have an inclusion $\m\syz^2 k\subseteq \soc \syz^2 k$.
Thus $R$ is Burch if and only if $\ell(\soc \syz^2k)> \ell(\m\syz^2 k)$.
There are equalities
\begin{align*}
\beta_2(k)
&=\ell(\syz^2 k)-\ell(\m\syz^2 k)=\ell(R^e)-\ell(\m)-\ell(\m\syz^2 k)
=(e-1)\ell(\m)+e-\ell(\m\syz^2 k)\\
&=(e-1)(e+\ell(\m^2))+e-\ell(\m\syz^2 k)
=e^2+(e-1)\ell(\m^2)-\ell(\m\syz^2 k).
\end{align*}
On the other hand, there is an inclusion $\syz^2 k\subseteq \m^e$, which induces an inclusion $\m\syz^2 k\subseteq (\m^2)^e$.
Thus one has $\ell(\m\syz^2 k)\le e\ell(\m^2)\le er=\ell(\soc \syz^2k)$.
If $\ell(\m^2)<\ell(\soc R)=r$, then we see that $\ell(\soc \syz^2k)> \ell(\m\syz^2 k)$.
The above equalities show that $\beta_2(k)\ge e^2-\ell(\m^2)>e^2-r$.
Therefore, we may assume $\ell(\m^2)=r$.
We obtain $\beta_2(k)=e^2-r+er-\ell(\m\syz^2 k)$.
It follows that $\beta_2>e^2-r$ if and only if $er-\ell(\m\syz^2 k)>0$.
The latter condition is equivalent to $\ell(\soc \syz^2k)> \ell(\m\syz^2 k)$.
\end{proof}

\begin{cor}
Let $(R,\m,k)$ be a local ring with $\m^3=0$ and $\soc R\subseteq \m^2$.
If $R$ is a Burch ring, then $R$ has no Conca generator in the sense of \cite{AIS}.
\end{cor}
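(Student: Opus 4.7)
The plan is to pit the numerical characterization of Burchness from Theorem \ref{cu} against the well-known form of the Poincaré series of $k$ over a ring admitting a Conca generator, and derive a numerical contradiction.

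First I would simplify the socle hypothesis. Since $\m^3=0$ one has $\m \cdot \m^2 = 0$, hence $\m^2 \subseteq \soc R$; combined with the assumption $\soc R \subseteq \m^2$ this forces the equality $\soc R = \m^2$. In particular $r := r(R) = \ell_R(\m^2)$, so that the quantity $e^2 - r$ appearing in Theorem \ref{cu} is exactly $\edim(R)^2 - \ell_R(\m^2)$.

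Next I would invoke the key consequence of the Conca-generator hypothesis from \cite{AIS}: if $a \in \m\setminus\m^2$ is a Conca generator (that is, $a\m = \m^2$ and $a^2 = 0$), then $R$ is Koszul. Since the Hilbert series of $R$ is $H_R(t) = 1 + et + \ell_R(\m^2)t^2$ with $e = \edim R$, Koszulness gives the Poincaré-series identity
$$P^R_k(t) \;=\; \frac{1}{H_R(-t)} \;=\; \frac{1}{1 - et + \ell_R(\m^2)\,t^2} \;=\; \frac{1}{1 - et + rt^2}.$$
Expanding the reciprocal and reading off coefficients yields $\beta_0(k) = 1$, $\beta_1(k) = e$, and, crucially, $\beta_2(k) = e^2 - r$.

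Finally I would apply Theorem \ref{cu}, which asserts that $R$ is Burch precisely when $\beta_2(k) > e^2 - r$. If $R$ were Burch and simultaneously admitted a Conca generator, the previous step would give the equality $\beta_2(k) = e^2 - r$, contradicting the strict inequality coming from the Burch assumption. Hence no such $a$ can exist, which is the claim. The only delicate point in the argument is justifying the Poincaré-series formula, which rests on the Koszulness of rings with a Conca generator established in \cite{AIS}; the role of the hypothesis $\soc R \subseteq \m^2$ is precisely to convert the coefficient $\ell_R(\m^2)$ appearing in $H_R$ into the type $r(R)$ used in Theorem \ref{cu}, so that the comparison is legitimate.
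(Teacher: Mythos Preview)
Your argument is correct and follows essentially the same route as the paper: both invoke \cite[Theorem 1.1]{AIS} to obtain $P_k^R(t)=\dfrac{1}{1-et+rt^2}$, read off $\beta_2(k)=e^2-r$, and conclude via Theorem \ref{cu}. Your write-up is in fact a little more explicit than the paper's in that you spell out why the hypothesis $\soc R\subseteq\m^2$ is needed (to identify $r(R)$ with $\ell(\m^2)$, so that the coefficient in the Hilbert series matches the type appearing in Theorem \ref{cu}) and you route through Koszulness rather than citing the Poincar\'e-series formula directly; the paper simply quotes the formula from \cite{AIS} and leaves this identification implicit.
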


\begin{proof}
If $R$ has a Conca generator, then the Poincar\'{e} series $P_k(t)=\sum \beta_it^i$ is of the form $\frac{1}{1-et+rt^2}$ by \cite[Theorem 1.1]{AIS}.
In particular, $\beta_2(k)=e^2-r$.
Thus $R$ is not Burch by Theorem \ref{cu}.
\end{proof}

Next, we study Burch rings which are fibre products. Let $k$ be a field.
A local ring $R$ is said to be a {\em fibre product} (over $k$) provided that it is of the form
$$
R\cong S\times_kT=\{(s,t)\in S\times T\mid \pi_S(s)=\pi_T(t)\},
$$
where $(S,\m_S)$ and $(T,\m_T)$ are local rings with common residue field $k$, and $\pi_S:S\to k$ and $\pi_T:T\to k$ are the natural surjections.
The fibre product is called {\em nontrivial} if $S\not=k\not=T$.
The set $S\times_k T$ is a local ring with maximal ideal $\m_{S\times_k T}=\m_S\oplus \m_T$ and residue field $k$.
Conversely, a local ring $R$ with decomposable maximal ideal $\m_R=I\oplus J$ is a fibre product since $R\cong (R/I)\times_k (R/J)$.
These observations are due to Ogoma \cite[Lemma 3.1]{Og}.
It holds that $\depth S\times_k T=\min\{\depth S,\depth T,1\}$; see \cite[Remarque 3.3]{L}.

We consider the Burchness of the fibre product $S\times_k T$.
We compute some invariants.

\begin{lem} \label{lfp}
Let $R=S\times_k T$ be a nontrivial fibre product, where $(S,\m_S,k)$ and $(T,\m_T,k)$ are local rings.
Then the following equalities hold.
\begin{enumerate}[\rm(1)]
\item 
$\edim R=\edim S+\edim T$.
\item
$\dim_k \soc R=\dim_k \soc S+\dim_k \soc T$.
\item
$\dim_k \h_1(\k^R)=\dim_k \h_1(\k^S)+\dim_k \h_1(\k^T)+\edim S\cdot\edim T$.
\item
$c_R=c_S+c_T+\edim S\cdot\edim T-\edim(S/\soc S)\cdot\edim(T/\soc T)$.
\end{enumerate}
\end{lem}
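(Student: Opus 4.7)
The central observation is that in $R = S \times_k T$, we have $\m_R = \m_S \oplus \m_T$ as ideals of $R$, and crucially $\m_S \cdot \m_T = 0$ in $R$ since $(s, 0) \cdot (0, t) = 0$. Item (1) follows immediately: $\m_R^2 = \m_S^2 \oplus \m_T^2$, so $\m_R/\m_R^2 \cong \m_S/\m_S^2 \oplus \m_T/\m_T^2$. For (2), an element $(s, t) \in \m_R$ annihilates $\m_R$ iff $s\m_S = 0$ and $t\m_T = 0$ (the cross-terms vanish automatically); since $S, T \neq k$, both $\soc S \subseteq \m_S$ and $\soc T \subseteq \m_T$, so $\soc R = \soc S \oplus \soc T$.

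For (3), we may pass to completions. Choose minimal Cohen presentations $P_1 = k[\![\vec x]\!] \twoheadrightarrow S$ and $P_2 = k[\![\vec y]\!] \twoheadrightarrow T$ with kernels $I_1 \subseteq \n_{P_1}^2$ and $I_2 \subseteq \n_{P_2}^2$, where $|\vec x| = m = \edim S$ and $|\vec y| = n = \edim T$. Then $P := k[\![\vec x, \vec y]\!]$ is regular with $\edim P = m + n = \edim R$ by (1), and the assignment $f \mapsto (f(\vec x, 0) \bmod I_1,\, f(0, \vec y) \bmod I_2)$ defines a surjection $P \twoheadrightarrow R$ with kernel $L \subseteq \n_P^2$. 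Hence $\dim_k \h_1(\k^R) = \dim_k \Tor_1^P(R, k) = \mu_P(L)$, and we compute this by applying $\Tor_*^P(-, k)$ to the $R$-module short exact sequence $0 \to R \to S \oplus T \to k \to 0$ whose second map is $(s, t) \mapsto \pi_S(s) - \pi_T(t)$.

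To compute $\Tor_*^P(S, k)$, note that the Koszul complex $\k^P \otimes_P S$ factors over $k$ as the tensor product of $\k^S$ and the Koszul complex on $\vec y$ over $S$; the second factor has trivial differential since $\vec y$ annihilates $S$, so the Künneth formula over the field $k$ gives $\Tor_i^P(S, k) \cong \bigoplus_{j+l=i} \h_j(\k^S) \otimes_k \Lambda^l k^n$. The key point is that the map $\h_j(\k^S) \to \Lambda^j k^m$ induced by $\pi_S$ vanishes for $j \geq 1$: the minimality $I_1 \subseteq \n_{P_1}^2$ permits a lift of $\pi_S$ to a chain map between minimal free resolutions whose matrices have entries in $\n_{P_1}$. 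Consequently the map $\Tor_1^P(S \oplus T, k) \to \Tor_1^P(k, k) = k^{m+n}$ is surjective with kernel $\h_1(\k^S) \oplus \h_1(\k^T)$, while $\Tor_2^P(S \oplus T, k) \to \Tor_2^P(k, k) = \Lambda^2 k^{m+n}$ has image $\Lambda^2 k^m \oplus \Lambda^2 k^n$ of codimension $mn$. The long exact sequence of $\Tor$ then yields $\dim_k \Tor_1^P(R, k) = mn + \dim_k \h_1(\k^S) + \dim_k \h_1(\k^T)$, proving (3).

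For (4), observe that $R/\soc R \cong S' \times_k T'$ by (2) and a direct check on the fibre product, and that this remains nontrivial. Applying (1) and (3) to $R' = S' \times_k T'$ yields $\edim R' = \edim S' + \edim T'$ and $\dim_k \h_1(\k^{R'}) = \dim_k \h_1(\k^{S'}) + \dim_k \h_1(\k^{T'}) + \edim S' \cdot \edim T'$; substituting these together with the formulas from (1)--(3) for $R$ itself into the definition of $c_R$ yields (4). The main technical obstacle is the Tor computation in (3): specifically, the Künneth identification of the Koszul complex and the vanishing of $\h_j(\k^S) \to \Lambda^j k^m$ in positive degree under minimality of the Cohen presentation.
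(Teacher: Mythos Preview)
Your proof is correct, and for parts (1), (2), (4) it matches the paper's approach. For (3) you take a genuinely different route. The paper simply cites the formula $\beta_2^R(k)=\beta_2^S(k)+\beta_2^T(k)+2\,\edim S\cdot\edim T$ from Kostrikin--\v{S}afarevi\v{c} and combines it with the standard identity $\dim_k\h_1(\k^R)=\beta_2^R(k)-\binom{\edim R}{2}$; the computation is then three lines. Your argument is more self-contained: you pick a minimal regular presentation $P\twoheadrightarrow R$, apply $\Tor_*^P(-,k)$ to the exact sequence $0\to R\to S\oplus T\to k\to0$, and analyze the relevant maps via a K\"unneth decomposition of $\k^P\otimes_P S$ together with the vanishing of $\h_j(\k^S)\to\Lambda^jk^m$ in positive degree. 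This avoids the external citation at the cost of length; in effect you are reproving the relevant fragment of the Poincar\'e-series formula for fibre products from scratch.

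Two minor points. First, writing the Cohen presentations as power series rings $k[\![\vec x]\!]$ tacitly assumes equicharacteristic; but your argument only uses that the regular system of parameters of $P$ splits as $\vec x,\vec y$ with images lying in $\m_S\oplus0$ and $0\oplus\m_T$ respectively, and such lifts exist over any minimal Cohen presentation of $\widehat R$, so the restriction is inessential. Second, the claim in (4) that $S'\times_k T'$ ``remains nontrivial'' can fail (for instance when $\m_S^2=0$, giving $S'=k$); however, identities (1) and (3) hold trivially when one factor equals $k$, so the computation of $c_R$ is unaffected. The paper handles (4) the same way you do.
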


\begin{proof}
(1)(2) These equalities can be checked directly.

(3) One has $\beta_2^R(k)=\beta_2^S(k)+\beta_2^T(k)+2\edim S\cdot\edim T$ and $\dim_k \h_1(\k^R)=\beta_2^R(k)-\binom{\edim R}{2}$; see \cite{KS} and \cite[Theorem 2.3.2]{BH} for example.
Thus there are equalities
\begin{align*}
\dim_k \h_1(\k^R) &=\beta_2^R(k)-\tbinom{\edim R}{2}
=\beta_2^S(k)+\beta_2^T(k)+2\edim S\cdot\edim T-\tbinom{\edim R}{2}\\
&=\dim_k \h_1(\k^S)-\tbinom{\edim S}{2}+\dim_k \h_1(\k^T)-\tbinom{\edim T}{2}+2\edim S\cdot\edim T-\tbinom{\edim R}{2}\\
&=\dim_k \h_1(\k^{R_1})+\dim_k \h_1(\k^{R_2})+\edim S\cdot \edim T.
\end{align*} 

(4) Put $R'=R/\soc R$, $S'=S/\soc S$ and $T'=T/\soc T$.
Then $R'\cong S'\times T'$ unless $S=k$ or $T=k$.
Using (1), (2) and (3), we can calculate $c_R$ as follows:
\begin{align*}
c_R &=\dim_k\soc R+\dim \h_1(\k^R)-\edim R-\dim \h_1(\k^{R'})+\edim R'\\
&=\dim_k\soc S+\dim_k\soc T+\dim_k \h_1(\k^S)+\dim_k \h_1(\k^{R_2})+\edim S\cdot \edim T\\
&-\edim S-\edim T-\dim_k\h_1(\k^{S'})-\dim_k \h_1(\k^{T'})-\edim S'\cdot\edim T'+\edim S'+\edim T'\\
&=c_S+c_T+\edim S\cdot\edim T-\edim S'\cdot \edim T'.\qedhere
\end{align*}
\end{proof}

Using the above lemma, we can characterize the Burch fibre products.

\begin{prop} \label{fp}
Let $R=S\times_k T$ be a nontrivial fibre product, where $(S,\m_S,k)$ and $(T,\m_T,k)$ are local rings.
Then $R$ is a Burch ring if and only if\\
\qquad{\rm(a)} $\depth R>0$,\quad
or\quad
{\rm(b)} $\depth R=0$ and either $S$ or $T$ is a Burch ring of depth zero.
\end{prop}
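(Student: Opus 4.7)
The plan is to split the argument according to $\depth R$. Since $\depth R = \min\{\depth S, \depth T, 1\}$ for a nontrivial fibre product, case (a) of the statement forces $\depth S, \depth T \ge 1$, while case (b) ($\depth R = 0$) means at least one of $S, T$ has depth zero.

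In case (b), Lemma \ref{l310} reduces Burchness to $c_R > 0$, and I would exploit the formula
\[
c_R = c_S + c_T + \edim S \cdot \edim T - \edim(S/\soc S) \cdot \edim(T/\soc T)
\]
from Lemma \ref{lfp}(4). Since $\edim X \ge \edim(X/\soc X)$, the last term is $\ge 0$, so $c_S > 0$ or $c_T > 0$ immediately yields $c_R > 0$; the converse rests on a ``socle lemma'' that if $\depth X = 0$ and $c_X = 0$, then $\soc \widehat X \subseteq \m_{\widehat X}^2$, hence $\edim X = \edim(X/\soc X)$ and the extra term drops out. To prove the socle lemma I take the contrapositive: given $s \in \soc \widehat X \setminus \m_{\widehat X}^2$, I extend to a minimal generating set $s, y_1, \dots, y_{n-1}$ of $\m_{\widehat X}$, verify $\m_{\widehat X} = (s) \oplus (y_1, \dots, y_{n-1})$ as a direct sum of ideals (using $s \in \soc$ and the independence of $s$ from the $y_i$ in $\m/\m^2$), and apply Ogoma's observation to write $\widehat X \cong \widehat X/(y_1, \dots, y_{n-1}) \times_k k[\![t]\!]/(t^2)$; a direct evaluation gives $c_{k[\![t]\!]/(t^2)} = 1$, so Lemma \ref{lfp}(4) applied to this fibre product forces $c_{\widehat X} \ge 1$.

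In case (a), the plan is to construct an explicit $\widehat R$-regular element $x$ such that $\widehat R/(x)$ is Burch of depth zero, whence $R$ is Burch by Definition \ref{d310}. Fix minimal Cohen presentations $\widehat S = P/I_S$, $\widehat T = Q/I_T$ with $P = k[\![x_1, \dots, x_m]\!]$, $Q = k[\![y_1, \dots, y_n]\!]$, and $I_S \subseteq \m_P^2$, $I_T \subseteq \m_Q^2$; the fibre product is then presented as $\widehat R \cong A/I$ with $A = k[\![x_1, \dots, x_m, y_1, \dots, y_n]\!]$ and $I = I_S A + I_T A + (x_i y_j)_{i,j}$. Using $\depth \widehat S, \depth \widehat T \ge 1$ and prime avoidance (after a faithfully flat extension to an infinite residue field if necessary, which preserves Burchness by Proposition \ref{23}), I choose $x_1, y_1$ as minimal generators of $\m_P, \m_Q$ whose images are regular in $\widehat S, \widehat T$ respectively. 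Then $\widetilde x := x_1 + y_1$ lies in $\m_A \setminus \m_A^2$ and has $\widehat R$-regular image (corresponding to $(x_1, y_1)$ in the fibre product), so $\overline A := A/(\widetilde x)$ is regular local.

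The image $\overline I$ of $I$ in $\overline A$, obtained via the substitution $y_1 \mapsto -x_1$, is generated by $I_S$, the substituted $\overline{I_T}$, the ideal $x_1 \overline{\p}$ with $\overline{\p} := (x_1, \dots, x_m) \overline A$, and the products $x_i y_j$ for $i \ge 1,\ j \ge 2$; all these sit in $\m_{\overline A}^2$. The punchline is that $x_1$ itself witnesses Burchness of $\overline I$: one checks $x_1 \m_{\overline A} \subseteq x_1 \overline{\p} + x_1(y_2, \dots, y_n) \subseteq \overline I$, so $x_1 \in (\overline I :_{\overline A} \m_{\overline A})$, while $x_1^2 \in x_1 \overline{\p} \subseteq \overline I$ lies outside $\m_{\overline A} \overline I \subseteq \m_{\overline A}^3$; hence $\m_{\overline A} (\overline I :_{\overline A} \m_{\overline A}) \ne \m_{\overline A} \overline I$, completing case (a). The principal obstacle I expect is the socle lemma in case (b): translating the numerical vanishing $c_X = 0$ into the structural statement $\soc \widehat X \subseteq \m_{\widehat X}^2$ requires the Ogoma unscrewing together with the careful check that the decomposition $\m_{\widehat X} = (s) \oplus (y_1, \dots, y_{n-1})$ holds as ideals, not merely as $k$-vector spaces.
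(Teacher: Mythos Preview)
Your proof is correct, modulo one slip: in the socle lemma, Ogoma's decomposition from $\m_{\widehat X}=(s)\oplus(y_1,\dots,y_{n-1})$ gives $\widehat X\cong \widehat X/(s)\times_k \widehat X/(y_1,\dots,y_{n-1})$, and it is the \emph{second} factor that is $k[\![t]\!]/(t^2)$; you wrote $\widehat X/(y_1,\dots,y_{n-1})$ for the first factor, which would make both factors coincide. With this corrected (and the trivial edge case $\edim\widehat X=1$ handled separately, since then the product is not nontrivial but $\widehat X\cong k[\![t]\!]/(t^2)$ directly), your case~(b) is exactly the paper's argument run in contrapositive form.

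In case~(a) your route and the paper's diverge in execution, though both rest on the same regular element. The paper works intrinsically in $R$: for regular $x\in\m_S\setminus\m_S^2$ and $y\in\m_T\setminus\m_T^2$ it notes $x\m_R=x\m_S=(x-y)\m_S$, so the image of $x$ in $R/(x-y)$ is a socle element outside $\m^2$; this forces a fibre-product splitting of $R/(x-y)$ with a factor of embedding dimension $\le 1$, and the already-proved depth-zero case finishes. You instead fix an explicit Cohen presentation of $\widehat R$ and exhibit $x_1\in(\overline I:\m_{\overline A})$ with $x_1^2\in\overline I\setminus\m_{\overline A}\overline I$ directly. The paper's version is shorter and reuses case~(b); yours is self-contained and avoids a second appeal to Lemma~\ref{lfp}(4). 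One small point: the prime-avoidance step needs no infinite residue field (a regular element in $\m\setminus\m^2$ always exists when $\depth>0$), so the parenthetical extension --- whose descent of \emph{ring}-Burchness is not literally covered by Proposition~\ref{23} --- can simply be dropped.
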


\begin{proof}
First we deal with the case where $\depth R=0$.
Lemma \ref{l310} shows that $R$ is Burch if and only if $c_R>0$.
Note that the integers $c_S, c_T$ and $N:=\edim S\cdot\edim T-\edim(S/\soc S)\cdot\edim(T/\soc T)$ are always nonnegative.
By Lemmas \ref{lfp}(4), the positivity of $c_S$ or $c_T$ implies that $R$ is Burch.
Conversely, assume that $R$ is Burch.
Then by Lemma \ref{lfp}(4) again, one of the three integers $c_S$, $c_T,N$ is positive.
If $c_S$ or $c_T$ is positive, then $S$ or $T$ is Burch.
When $N>0$, either $\edim S>\edim S/\soc S$ or $\edim T>\edim T/\soc T$ holds.
Without loss of generality, we may assume that $\edim S>\edim S/\soc S$.
This inequality means that there is an element $x\in(\m_S\cap\soc S)\setminus \m_S^2$.
Then $\m_S=I\oplus (x)$ for some ideal $I$.
We see that $S\cong S/(x)\times_k S/I$ and $\edim S/I\le 1$.
Example \ref{e310} implies that $S/I$ is Burch, and so is $S$.

Next, we consider the case where $\depth R>0$.
In this case, we have $\depth S>0$, $\depth T>0$ and $\depth R=1$.
Take regular elements $x\in\m_S\setminus \m_S^2$ and $y\in\m_T\setminus \m_T^2$.
The element $x-y\in\m_R=\m_S\oplus\m_T$ is also a regular element of $R$.
The equalities $x\m_R=x\m_S=(x-y)\m_S$ show that the image $\overline{x}\in R/(x-y)$ of $x$ is in $\soc R/(x-y)$.
We have $\m_R/(x-y)=(\overline{x})\oplus I$ for some ideal $I$ of $R/(x-y)$.
Hence $R/(x-y)$ is isomorphic to the fibre product $U\times_kV$ of local rings over their common residue field $k$ such that $\edim V\le1$.
As $V$ is Burch by Example \ref{e310}, it follows that so is $R/(x-y)$, and hence so is $R$.
\end{proof}

\begin{ex}\label{r2}
Let $R=k[x,y]/(x^a,xy,y^b)$ with $k$ a field and $a,b\ge1$.
Then $R$ is a Burch ring.
In fact, $R$ is isomorphic to the fibre product of $k[x]/(x^a)$ and $k[y]/(y^b)$ over $k$.
By Example \ref{e310}, the rings $k[x]/(x^a)$ and $k[y]/(y^b)$ are Burch, and so is $R$ by Proposition \ref{fp}.
\end{ex}

\section{Homological and categorical properties of Burch rings}\label{main4Sec}

In this section, we explore some homological and categorical aspects of Burch rings. They come in several flavors. 
We prove a classification theorem of subcategories over Burch rings.
We also prove that non-Gorenstein Burch rings are G-regular in the sense of \cite{greg}, and that nontrivial consecutive vanishings of Tor over Burch rings cannot happen.
We begin with recalling the definition of resolving subcategories.

\begin{dfn}
Let $R$ be a ring.
A subcategory $\X$ of $\mod R$ is {\em resolving} ifthe following hold.
\begin{enumerate}[(1)]
\item
The projective $R$-modules belong to $\X$.
\item
Let $M$ be an $R$-module and $N$ a direct summand of $M$.
If $M$ is in $\X$, then so is $N$.
\item
For an exact sequence $0\to L\to M\to N\to0$, if $L$ and $N$ are in $\X$, then so is $M$.
\item
For an exact sequence $0\to L\to M\to N\to0$, if $M$ and $N$ are in $\X$, then so is $L$.
\end{enumerate}

Note that (1) can be replaced by the condition that $\X$ contains $R$.
Also, (4) can be replaced by the condition that if $M$ is an $R$-module in $\X$, then so is $\syz M$.
For an $R$-module $C$, we denote by $\res_RC$ the {\em resolving closure} of $C$, the smallest resolving subcategory of $\mod R$ containing $C$.
\end{dfn}

We establish a couple of lemmas to prove Proposition \ref{r1}.
The first lemma is used as a base result of this section, which is essentially shown in \cite[Proposition 4.2]{res3r}.
For an $R$-module $M$ we denote by $\nf(M)$ the {\em nonfree locus} of $M$, that is, the set of prime ideals $\p$ of $R$ such that $M_\p$ is nonfree as an $R_\p$-module.

\begin{lem} \label{l52}
Let $(R,\m)$ be a local ring, $M$ a nonfree $R$-module, and $x$ an element in $\m$.
\begin{enumerate}[\rm(1)]
\item
There exists a short exact sequence $0\to \syz M \to M(x) \to M \to 0$ such that $x\in\I_1(M(x))\subseteq\m$ and $\pd_RM(x)\ge \pd_R M$.
In particular, $M(x)$ belongs to $\res_R M$.
\item
For each $\p\in\V(x)\cap\nf(M)$ one has $\V(\p)\subseteq \nf(M(x))\subseteq \nf(M)$ and $\D(x)\cap \nf(M(x))=\emptyset$.
\end{enumerate}
\end{lem}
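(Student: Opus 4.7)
The plan is to construct $M(x)$ explicitly as the pullback of a minimal free cover $\pi\colon F_0\twoheadrightarrow M$ along the multiplication map $x\cdot\colon M\to M$; equivalently, as the pushout of the inclusion $\syz M\hookrightarrow F_0$ along $x\cdot\colon\syz M\to\syz M$. Concretely, set $M(x)=\{(y,m)\in F_0\oplus M \mid \pi(y)=xm\}$. The short exact sequence $0\to\syz M\to M(x)\to M\to 0$ drops out of the pullback diagram (the projection $(y,m)\mapsto m$ is surjective with kernel $\syz M$).

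For part (1), I would extract the presentation matrix as follows. Given the start $F_2\xrightarrow{\partial_2}F_1\xrightarrow{\partial_1}F_0\to M\to 0$ of the minimal free resolution of $M$, a routine diagram chase on the pullback description yields a free presentation
\[
F_2\oplus F_1 \xrightarrow{\,A\,} F_1\oplus F_0 \to M(x) \to 0,\qquad A=\begin{pmatrix}\partial_2 & -x\cdot\id_{F_1}\\ 0 & \partial_1\end{pmatrix}.
\]
All entries of $A$ lie in $\m$, so this presentation is minimal; hence $x\in\I_1(M(x))\subseteq\m$. The inequality $\pd_R M(x)\ge\pd_R M$ follows from the long exact Tor sequence together with the elementary fact $\pd_R\syz M=\pd_R M-1$ when $\pd_R M$ is finite (and the obvious case when it is infinite). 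The membership $M(x)\in\res_R M$ is immediate since $\syz M\in\res_R M$ and resolving subcategories are closed under extensions.

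For part (2), the three containments come from localizing the short exact sequence and analysing it prime by prime. If $\q\in\D(x)$, then $x$ is a unit in $R_\q$, so the pushout of the canonical extension $0\to(\syz M)_\q\to(F_0)_\q\to M_\q\to0$ along the isomorphism $x\cdot$ is isomorphic to the original, giving $M(x)_\q\cong(F_0)_\q$, which is free. If $M_\q$ is free (so $\q\notin\nf(M)$), then $(\syz M)_\q$ is a free direct summand of $(F_0)_\q$, and the sequence $0\to(\syz M)_\q\to M(x)_\q\to M_\q\to 0$ splits because $\Ext^1_{R_\q}(M_\q,-)=0$, whence $M(x)_\q$ is free; thus $\nf(M(x))\subseteq\nf(M)$.

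The remaining containment $\V(\p)\subseteq\nf(M(x))$ for $\p\in\V(x)\cap\nf(M)$ is the main obstacle, and I would handle it via the extension-class interpretation. The class of $0\to\syz M\to M(x)\to M\to 0$ in $\Ext^1_R(M,\syz M)$ equals $x\cdot\eta$, where $\eta$ is the class of $0\to\syz M\to F_0\to M\to 0$. Given $\q\in\V(\p)$, we have $x\in\q$ and $M_\q$ is non-free (since $M_\p$ is a localization of $M_\q$ and is non-free). Naturality of connecting homomorphisms under pushout shows that the Tor connecting map
\[
\partial\colon \Tor_1^{R_\q}(M_\q,\kappa(\q))\to (\syz M)_\q\otimes_{R_\q}\kappa(\q)
\]
attached to the localized sequence equals $x\otimes\id$ composed with the connecting map of the original extension. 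Since $x\in\q R_\q$ acts as zero on $\kappa(\q)$, this $\partial$ vanishes, so the long exact Tor sequence yields a surjection $\Tor_1^{R_\q}(M(x)_\q,\kappa(\q))\twoheadrightarrow\Tor_1^{R_\q}(M_\q,\kappa(\q))\ne 0$. Hence $M(x)_\q$ is non-free, as required.
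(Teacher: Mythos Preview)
Your construction of $M(x)$ as a pullback is the same as the paper's; the paper presents it as the appropriate truncation of the mapping cone of the multiplication-by-$x$ endomorphism of a minimal free resolution of $M$, and obtains exactly your presentation matrix (up to signs). Your treatment of part~(2) is correct and fills in the details that the paper leaves to a citation.

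There is one genuine gap, in your argument for $\pd_RM(x)\ge\pd_RM$. Appealing only to the long exact Tor sequence together with $\pd_R\syz M=\pd_RM-1$ is not enough: one must know that the connecting homomorphisms $\Tor_i^R(M,k)\to\Tor_{i-1}^R(\syz M,k)$ attached to $0\to\syz M\to M(x)\to M\to0$ are not isomorphisms. In the infinite-projective-dimension case this is not ``obvious''; a priori one could have $\pd_RM(x)<\infty$ while $\pd_RM=\pd_R\syz M=\infty$, and the long exact sequence yields no contradiction. The fix is immediate with tools you already have. Either observe that the \emph{entire} mapping cone, not just its first two terms, is a minimal free resolution of $M(x)$, since every differential $\left(\begin{smallmatrix}\partial_{i+1}&\pm x\\0&\mp\partial_i\end{smallmatrix}\right)$ has entries in $\m$; this gives $\beta_i^R(M(x))=\beta_{i+1}^R(M)+\beta_i^R(M)$ for all $i\ge0$, and the inequality follows. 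Or apply your own observation from part~(2): the connecting map equals $x$ times the canonical isomorphism $\Tor_i^R(M,k)\cong\Tor_{i-1}^R(\syz M,k)$, hence is zero on these $k$-vector spaces, so the long exact sequence breaks into short exact sequences and again $\beta_i^R(M(x))=\beta_{i+1}^R(M)+\beta_i^R(M)$. The paper takes the first route.
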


\begin{proof}
(1) Let $\cdots\xrightarrow{d_3} F_2\xrightarrow{d_2} F_1 \xrightarrow{d_1} F_0\xrightarrow{\pi}M\to0$ be a minimal free resolution of $M$.
Taking the mapping cone of the multiplication map of the complex $F$ by $x$, we get an exact sequence
$$
\cdots\to F_3\oplus F_2\xrightarrow{\left(\begin{smallmatrix}d_3&x\\0&-d_2\end{smallmatrix}\right)}F_2\oplus F_1\xrightarrow{\left(\begin{smallmatrix}d_2&x\\0&-d_1\end{smallmatrix}\right)}F_1\oplus F_0\xrightarrow{\left(\begin{smallmatrix}d_1&x\\0&-\pi\end{smallmatrix}\right)}F_0\oplus M\xrightarrow{\left(\begin{smallmatrix}\pi&x\end{smallmatrix}\right)}M\to0.
$$
Set $M(x)=\im\left(\begin{smallmatrix}d_1&x\\0&-\pi\end{smallmatrix}\right)=\cok\left(\begin{smallmatrix}d_2&x\\0&-d_1\end{smallmatrix}\right)$.
The free resolution of $M(x)$ given by truncating the above sequence is minimal.
We see that $x\in\I_1(M(x))\subseteq\m$ as $M$ is nonfree, and that $\pd_RM(x)\ge\pd_R M$.
The following pullback diagram gives an exact sequence as in the assertion.
$$
\xymatrix{
0\ar[r] & \syz M\ar[r]^f & F_0\ar[r]^\pi & M\ar[r] & 0\\
0\ar[r] & \syz M\ar[r]\ar@{=}[u] & M(x)\ar[r]\ar[u] & M\ar[r]\ar[u]_{x} & 0
}
$$

(2) The module $M(x)$ fits into the pushout diagram
$$
\xymatrix{
0\ar[r] & \syz M\ar[r]^f\ar[d]^{x} & F_0\ar[r]^\pi\ar[d] & M\ar[r]\ar@{=}[d] & 0\phantom{.}\\
0\ar[r] & \syz M\ar[r] & M(x)\ar[r] & M\ar[r] & 0.
}
$$
Using the same argument as in the proof of \cite[Proposition 4.2]{res3r}, we observe that $\V(\p)\subseteq \nf(M(x))\subseteq \nf(M)$ and $\D(x)\cap\nf(M(x))=\emptyset$ hold.
\end{proof}

\begin{lem} \label{l53}
Let $(R,\m)$ be a local ring and $M$ an $R$-module.
Let $W\subseteq \nf(M)$ be a closed subset of $\spec R$.
Then there exists an $R$-module $X$ such that $\pd_R X\ge\pd_R M$ and $\nf(X)=W$.
\end{lem}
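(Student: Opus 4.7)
The plan is to iterate the construction from Lemma \ref{l52} once for each generator of a defining ideal of $W$, progressively trimming the nonfree locus from $\nf(M)$ down to $W$. The case $W=\emptyset$ is routine (take $X=M$ when $M$ is free), so I focus on $W\neq\emptyset$.

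Since $W$ is closed and nonempty in $\spec R$ for a local ring, any ideal $I\subseteq R$ with $\V(I)=W$ must lie in $\m$. Choose generators $x_1,\dots,x_n\in\m$ of such an $I$, and define modules $M_0,M_1,\dots,M_n$ inductively by $M_0=M$ and $M_{i+1}=M_i(x_{i+1})$, where $M_i(x_{i+1})$ is the module produced by Lemma \ref{l52}(1) applied to $M_i$ and $x_{i+1}$. The hypothesis that $M_i$ be nonfree, needed at each stage in order to invoke that lemma, will emerge from the induction below.

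The heart of the argument is an induction on $i$ yielding
\[
\V(I)\;\subseteq\;\nf(M_i)\;\subseteq\;\nf(M)\cap \V(x_1)\cap\cdots\cap \V(x_i).
\]
The base case is immediate. For the inductive step, any $\p\in\V(I)$ lies in $\V(x_{i+1})\cap\nf(M_i)$, so Lemma \ref{l52}(2) yields $\V(\p)\subseteq\nf(M_{i+1})$, hence $\p\in\nf(M_{i+1})$; the upper containment follows from $\nf(M_{i+1})\subseteq\nf(M_i)$ together with $\D(x_{i+1})\cap\nf(M_{i+1})=\emptyset$, both provided by Lemma \ref{l52}(2). Since $\V(I)=W$ is nonempty and sits inside each $\nf(M_i)$, every $M_i$ is nonfree, so the iterative construction is legitimate.

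Setting $X:=M_n$, the chain collapses to $W\subseteq\nf(X)\subseteq\nf(M)\cap\V(I)=W$, so $\nf(X)=W$; the bound $\pd_R X\geq\pd_R M$ follows by iterating the projective-dimension inequality in Lemma \ref{l52}(1). The whole argument is essentially bookkeeping of nonfree loci, with all the nontrivial content hidden in Lemma \ref{l52}(2), so I do not anticipate any serious obstacle beyond correctly invoking that lemma at each step.
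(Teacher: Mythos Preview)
Your argument is correct and is precisely the approach the paper intends: the paper's own proof simply cites \cite[Theorem 4.3]{res3r} with Lemma \ref{l52} substituted for \cite[Lemma 4.2]{res3r}, and your iteration of the $M\mapsto M(x)$ construction over a generating set of a defining ideal of $W$ is exactly how that argument runs. One small remark: your treatment of $W=\emptyset$ is incomplete (if $M$ is nonfree the conclusion is actually unattainable, so the lemma tacitly concerns nonempty $W$), but this edge case is irrelevant to the paper's applications.
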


\begin{proof}
The assertion follows from the proof of \cite[Theorem 4.3]{res3r} by replacing \cite[Lemma 4.2]{res3r} used there with our Lemma \ref{l52}.
\end{proof}

\begin{lem} \label{301}
Let $(R,\m)$ be a local ring and $M$ a nonfree $R$-module.
Then there is an exact sequence $0\to (\syz M)^n \to N \to M^n \to 0$ with $n\ge1$, $\I_1(N)=\m$ and $\pd_RN\ge\pd_RM$.
In particular, $N\in\res_R M$.
\end{lem}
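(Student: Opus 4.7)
The plan is to construct $N$ as a direct sum of the modules $M(x)$ provided by Lemma \ref{l52}, letting $x$ range over a minimal generating set of $\m$. Concretely, first I would choose $x_1,\dots,x_n\in\m$ forming a minimal system of generators, and apply Lemma \ref{l52}(1) to each $x_i$ to produce short exact sequences
\[
0\to\syz M\to M(x_i)\to M\to 0
\]
with $x_i\in\I_1(M(x_i))\subseteq\m$ and $\pd_R M(x_i)\ge\pd_R M$, and with $M(x_i)\in\res_R M$.

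Next I would set $N=\bigoplus_{i=1}^n M(x_i)$. Taking the direct sum of the above $n$ short exact sequences yields the exact sequence
\[
0\to(\syz M)^n\to N\to M^n\to 0
\]
required by the statement. The condition $\I_1(N)=\m$ follows because, since a minimal free presentation of $N$ is the direct sum of minimal free presentations of the summands, its presentation matrix is block diagonal; hence $\I_1(N)=\sum_{i=1}^n\I_1(M(x_i))$, and this ideal contains each $x_i$ while being contained in $\m$. For projective dimensions, $\pd_R N=\max_i\pd_R M(x_i)\ge\pd_R M$ by Lemma \ref{l52}(1).

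Finally, to see $N\in\res_R M$, I note that each $M(x_i)$ lies in $\res_R M$ by Lemma \ref{l52}(1), and any resolving subcategory is closed under finite direct sums (using the split extensions $0\to A\to A\oplus B\to B\to 0$ together with the extension-closure axiom). Thus $N\in\res_R M$. There is no real obstacle here; the only thing that requires a moment's attention is verifying that $\I_1$ of a direct sum of modules is the sum of the individual $\I_1$'s, which is immediate from the block-diagonal form of a minimal presentation of the direct sum.
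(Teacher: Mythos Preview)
Your proof is correct and follows essentially the same approach as the paper: both take a minimal generating set $x_1,\dots,x_n$ of $\m$, form $N=\bigoplus_{i=1}^n M(x_i)$ via Lemma~\ref{l52}, and read off the required properties. You have supplied a bit more detail than the paper on why $\I_1(N)=\sum_i\I_1(M(x_i))$ and why $N\in\res_R M$, but the argument is the same.
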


\begin{proof}
Let $x_1,\dots,x_n$ be a minimal system of generators of $\m$.
According to Lemma \ref{l52}, for each $i$ there exists an exact sequence $0\to\syz M\to M(x_i)\to M\to0$ such that $x_i\in\I_1(M(x_i))\subseteq\m$ and $\pd_R M(x_i)\ge\pd_RM$.
Putting $N=\bigoplus_{i=1}^nM(x_i)$, we obtain an exact sequence $0 \to (\syz M)^n \to N \to M^n \to 0$ with $\I_1(N)=\sum_{i=1}^n\I_1(M(x_i))=\m$ and $\pd_RN\ge\pd_RM$.
\end{proof}

\begin{lem} \label{l54}
Let $R$ be a local ring.
Let $M$ be an $R$-module that is locally free on the punctured spectrum of $R$.
\begin{enumerate}[\rm(1)]
\item 
For every $X\in \res_{\widehat{R}}\widehat{M}$ there exists $Y\in\res_RM$ such that $X$ is a direct summand of $\widehat{Y}$.
\item 
Let $N$ be an $R$-module.
If $\widehat{N}\in \res_{\widehat{R}}\widehat{M}$, then $N\in \res_R M$.
\end{enumerate}
\end{lem}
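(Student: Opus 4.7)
The plan is to establish both parts by constructing $\res_{\widehat R}\widehat M$ explicitly in terms of $\res_R M$ and then executing a faithful-flatness descent.

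For part (1), I would let $\C$ denote the full subcategory of $\mod\widehat R$ consisting of those modules $X$ that are direct summands of $\widehat Y$ for some $Y\in\res_R M$, and prove that $\C$ is a resolving subcategory containing $\widehat M$ and $\widehat R$; this will force $\res_{\widehat R}\widehat M\subseteq\C$. Closure under direct summands is formal, and closure under syzygies follows because completion of a minimal free resolution over $R$ is a minimal free resolution over $\widehat R$, so $\widehat{\syz_R Y}\cong\syz_{\widehat R}\widehat Y$ and $\syz_{\widehat R}X$ is a direct summand of $\widehat{\syz_R Y}$. The crucial verification is closure under extensions. Given $0\to X_1\to X_2\to X_3\to 0$ in $\mod\widehat R$ with $X_i$ a direct summand of $\widehat{Y_i}$ for some $Y_i\in\res_R M$, taking the direct sum with the split extensions on the complementary summands yields a short exact sequence $0\to\widehat{Y_1}\to W\to\widehat{Y_3}\to 0$ in $\mod\widehat R$ of which $X_2$ is a direct summand. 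Here the hypothesis on $M$ is decisive: every module in $\res_R M$ inherits from $M$ the property of being locally free on the punctured spectrum of $R$, so $\Ext^1_R(Y_3,Y_1)$ is supported only at $\m$ and has finite length, and the flat base change map $\Ext^1_R(Y_3,Y_1)\otimes_R\widehat R\to\Ext^1_{\widehat R}(\widehat{Y_3},\widehat{Y_1})$ is an isomorphism of finite-length modules. Lifting the extension class of $W$ along this isomorphism produces a short exact sequence $0\to Y_1\to Z\to Y_3\to 0$ over $R$ with $Z\in\res_R M$ and $\widehat Z\cong W$, so $X_2$ is a direct summand of $\widehat Z$, that is, $X_2\in\C$.

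For part (2), part (1) supplies $Y\in\res_R M$ with $\widehat N$ a direct summand of $\widehat Y$, via some split monomorphism $\sigma:\widehat N\to\widehat Y$. By flat base change (since $N$ is finitely presented), $\sigma$ corresponds to an element $\sum_{i=1}^n\sigma_i\otimes r_i$ of $\Hom_R(N,Y)\otimes_R\widehat R$ with $\sigma_i\in\Hom_R(N,Y)$ and $r_i\in\widehat R$. Assembling the $\sigma_i$ gives an $R$-linear map $\widetilde\sigma=(\sigma_1,\dots,\sigma_n)^{\t}:N\to Y^n$ whose completion $\widehat{\widetilde\sigma}$ is itself a split monomorphism, since $\sigma$ factors as $\widehat{\widetilde\sigma}$ followed by the $\widehat R$-linear contraction $\widehat Y^n\to\widehat Y$ determined by the $r_i$. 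I would then invoke the elementary descent fact that an $R$-linear map $f:A\to B$ of finitely generated $R$-modules is a split monomorphism whenever $\widehat f$ is: letting $I$ be the image of $\phi:\Hom_R(B,A)\to\operatorname{End}_R(A)$, $s\mapsto s\circ f$, the hypothesis gives $\id_A\otimes 1\in I\otimes_R\widehat R$, and the faithful flatness of $\operatorname{End}_R(A)/I\hookrightarrow(\operatorname{End}_R(A)/I)\otimes_R\widehat R$ forces $\id_A\in I$. Applied to $\widetilde\sigma$, this shows that $N$ is an $R$-direct summand of $Y^n\in\res_R M$, hence $N\in\res_R M$.

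The main obstacle is the extension-closure step in part (1): one must lift an $\Ext^1$-class from $\widehat R$ down to $R$, which is precisely where the locally-free-on-the-punctured-spectrum hypothesis on $M$ is essential, since it forces $\Ext^1_R(Y_3,Y_1)$ to have finite length and hence to coincide with its completion. Once (1) is in place, part (2) reduces to the clean faithful-flatness argument above.
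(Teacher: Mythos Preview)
Your argument is correct, and part (1) is essentially identical to the paper's: you both define the subcategory $\C$ of direct summands of completions of modules in $\res_R M$, verify resolving axioms, and use the finite length of $\Ext^1_R(Y_3,Y_1)$ (coming from local freeness on the punctured spectrum) to lift extension classes from $\widehat R$ to $R$.

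For part (2) your route diverges from the paper's. The paper simply cites \cite[Corollary 1.15(i)]{LW}, which states that for finitely generated modules over a local ring, $\widehat N\mid\widehat Y$ implies $N\mid Y$; this immediately puts $N$ in $\res_R M$. Your argument is more self-contained: you descend the split monomorphism by writing $\sigma$ as a finite $\widehat R$-combination of maps coming from $R$, assembling them into $\widetilde\sigma:N\to Y^n$, and then using faithful flatness on the cokernel of $\Hom_R(Y^n,N)\to\operatorname{End}_R(N)$ to conclude that $\widetilde\sigma$ splits over $R$. The price is that you only obtain $N\mid Y^n$ rather than $N\mid Y$, but since $\res_R M$ is closed under finite direct sums and summands this is harmless. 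Your approach avoids the external reference at the cost of a slightly weaker (though entirely adequate) intermediate conclusion; the paper's approach is shorter but black-boxes the descent.
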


\begin{proof}
(1) Let $\C$ be the subcategory of $\mod\widehat R$ consisting of direct summands of the completions of modules in $\res_RM$.
We claim that $\C$ is a resolving subcategory of $\mod\widehat R$ containing $\widehat M$.
Indeed, since $R,M$ are in $\res_RM$, the completions $\widehat R,\widehat M$ are in $\C$.
For each $E\in\C$, there exists $D\in\res_RM$ such that $E$ is a direct summand of $\widehat D$.
The module $\syz_{\widehat R}E$ is a direct summand of $\syz_{\widehat R}\widehat D=\widehat{\syz_RD}$.
As $\syz_RD\in\res_RM$, we have $\syz_RE\in\C$.
Let $0\to A\to B\to C\to0$ be an exact sequence of $\widehat R$-modules with $A,C\in\C$.
Then $A,C$ are direct summands of $\widehat V,\widehat W$ for some $V,W\in\res_RM$, respectively.
Writing $A\oplus A'=\widehat V$ and $C\oplus C'=\widehat W$, we get an exact sequence $\sigma:0\to \widehat V\to B'\to \widehat W\to0$, where $B'=A'\oplus B\oplus C'$.
The exact sequence $\sigma$ corresponds to an element of $\Ext_{\widehat R}^1(\widehat W,\widehat V)=\widehat{\Ext_R^1(W,V)}$.
Since $M$ is locally free on the punctured spectrum of $R$, so are $V$ and $W$.
Hence $\Ext_R^1(W,V)$ has finite length as an $R$-module, and is complete.
This implies that there exists an exact sequence $\tau:0\to V\to U\to W\to0$ of $R$-modules such that $\widehat\tau\cong\sigma$.
Thereofre $U$ is in $\res_RM$ and $B'$ is isomorphic to $\widehat U$.
Thus $B$ belongs to $\C$, and the claim follows.
The claim shows that $\C$ contains $\res_{\widehat R}\widehat M$.
Hence $X$ is in $\C$, which shows the assertion.

(2) By (1) there is an $R$-module $Y\in \res_R M$ such that $\widehat{N}$ is a direct summand of $\widehat{Y}$.
Thanks to \cite[Corollary 1.15(i)]{LW}, the module $N$ is a direct summand of $Y$.
Hence $N$ belongs to $\res_R M$.
\end{proof}

Now we can show the proposition below, which yields a significant property of Burch rings.
This is also used in the proofs of Theorem \ref{33} and \ref{th54}.

\begin{prop}\label{r1}
Let $R$ be a Burch local ring of depth $t$ with residue field $k$.
Let $M$ be an $R$-module of infinite projective dimension.
Then $\syz^tk$ belongs to $\res_RM$.
\end{prop}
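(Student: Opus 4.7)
The plan is to reduce $M$ to a particularly nice module, apply Proposition \ref{203} to a quotient by a Burch regular sequence, and lift the resulting summand. Since $M$ has infinite projective dimension, $\m\in\nf(M)$; applying Lemma \ref{l53} with $W=\{\m\}$ gives $X\in\res_R M$ satisfying $\pd_R X=\infty$ and $\nf(X)=\{\m\}$. Replacing $X$ by $\syz_R^s X$ for $s$ large, we may further assume $\depth_R X\ge t$. Then Lemma \ref{301} produces $N\in\res_R X\subseteq\res_R M$ with $\I_1(N)=\m$, $\pd_R N=\infty$, $\nf(N)\subseteq\{\m\}$ (by Lemma \ref{l52}(2)), and $\depth_R N\ge t$ (by the depth lemma on the defining short exact sequences). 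It suffices to show $\syz^t_R k\in\res_R N$; since $N$ is locally free on the punctured spectrum, Lemma \ref{l54}(2) reduces this to the completion, so we may assume $R$ is complete.

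By the definition of a Burch ring, we pick an $R$-regular sequence $\xx=x_1,\dots,x_t$ such that $\bar R:=R/(\xx)$ is Burch of depth zero. Because $\nf(N)\subseteq\{\m\}$ and $\depth_R N\ge t\ge 1$, every associated prime of $N$ other than $\m$ lies in $\Ass R$ and $\m\notin\Ass N$, so $\Ass N\subseteq\Ass R$; the same holds for each $\syz^i_R N$. Hence $\xx$ is regular on $N$ and on every $\syz^i_R N$. Tensoring a minimal $R$-free resolution of $N$ with $\bar R$ therefore stays both exact and minimal, so $\bar N:=N/\xx N$ has $\I_1(\bar N)=\bar\m$, $\pd_{\bar R}\bar N=\infty$, and $\syz^2_{\bar R}\bar N\cong L/\xx L$ for $L:=\syz^2_R N$. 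Proposition \ref{203} now yields that $k$ is a $\bar R$-direct summand, hence an $R$-direct summand, of $L/\xx L$.

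To finish we show $\syz^t_R(L/\xx L)\in\res_R L$; since $k$ is an $R$-direct summand of $L/\xx L$, additivity of $\syz^t_R$ over direct sums gives $\syz^t_R k$ as an $R$-direct summand of $\syz^t_R(L/\xx L)$, whence $\syz^t_R k\in\res_R L\subseteq\res_R N\subseteq\res_R M$. Set $L_i:=L/(x_1,\dots,x_i)L$, so $L_0=L$ and $L_t=L/\xx L$. For each $i$, from the short exact sequence $0\to L_i\xrightarrow{x_{i+1}}L_i\to L_{i+1}\to 0$ (exact by $x_{i+1}$-regularity on $L_i$), a direct computation with the minimal $R$-presentation of $L_i$ yields a short exact sequence $0\to\syz_R L_i\to\syz_R L_{i+1}\to L_i\to 0$. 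Applying $\syz_R^i$ via the horseshoe lemma then produces
\[
0\to\syz_R^{i+1}L_i\to\syz_R^{i+1}L_{i+1}\oplus F\to\syz_R^i L_i\to 0
\]
for some free $R$-module $F$. Inductively, if $\syz_R^i L_i\in\res_R L$ (the case $i=0$ being trivial), then $\syz_R^{i+1}L_i=\syz_R(\syz_R^i L_i)\in\res_R L$ by syzygy closure, whence by extension closure and closure under direct summands $\syz_R^{i+1}L_{i+1}\in\res_R L$. Setting $i=t$ completes the proof.

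The principal obstacle is the initial setup: one must simultaneously arrange that $N\in\res_R M$ is locally free on the punctured spectrum (so that Lemma \ref{l54}(2) allows reduction to the complete case, and so that $\Ass N\subseteq\Ass R$ makes the Burch regular sequence $\xx$ automatically $N$- and $\syz^i_R N$-regular), of depth at least $t$, of infinite projective dimension, and with $\I_1(N)=\m$ (so that Proposition \ref{203} applies over $\bar R$). The combination of Lemmas \ref{l53}, \ref{l52}(2), and \ref{301} together with syzygy replacement achieves this.
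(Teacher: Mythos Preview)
Your proof is correct and reaches the same conclusion, but it is organized differently from the paper's and avoids one external citation.

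The paper proceeds in three cases. For $R$ complete of depth zero it uses Lemma \ref{301} and Proposition \ref{203} exactly as you do. For $R$ complete of positive depth it works with $\syz^t M$ directly: the regularity of $\xx$ on $\syz^t M$ comes from the one-line Tor shift $\Tor_i^R(\syz^t M,R/(\xx))=\Tor_{i+t}^R(M,R/(\xx))=0$ (since $\pd_R R/(\xx)=t$), and then the depth-zero case is applied over $\bar R=R/(\xx)$ to get $k\in\res_{\bar R}(\syz^t M/\xx\syz^t M)$; the passage back to $\res_R$ is then handled by citing \cite[Lemma 5.8]{stcm}. Finally the non-complete case is reduced via Lemmas \ref{l53} and \ref{l54}, as you also do. Your argument instead front-loads Lemma \ref{l53} and Lemma \ref{301} before completing, and replaces the appeal to \cite[Lemma 5.8]{stcm} with the explicit $L_i$-induction showing $\syz^t_R(L/\xx L)\in\res_R L$. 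This makes your proof self-contained at the cost of a little extra length; the paper's version is shorter but leans on the external lemma.

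One expository point: your sentence ``$\Ass N\subseteq\Ass R$; the same holds for each $\syz^i_R N$. Hence $\xx$ is regular on $N$'' only literally yields that $x_1$ is $N$-regular. To get the full sequence you need the easy induction: $N/x_1N$ is again locally free on the punctured spectrum of $R/(x_1)$ with depth $\ge t-1$, so the same argument applies over $R/(x_1)$, etc. (Alternatively, the paper's Tor-shift argument gives this in one stroke.) The claim is correct, so this is not a gap, but it would be worth one more line.
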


\begin{proof}
We begin with proving the proposition when $R$ is complete and $t=0$.
As $M$ has infinite projective dimension, Lemma \ref{301} gives rise to an $R$-module $N\in\res_RM$ with $\I_1(N)=\m$.
Proposition \ref{203} implies that $k$ is a direct summand of $\syz^2_RN$.
As $\syz^2_RN$ is in $\res_RM$, so is $k$.

Now, let us consider the case where $R$ is complete and $t>0$.
By definition, there is a maximal regular sequence $\xx$ of $R$ such that $R/(\xx)$ is a Burch ring of depth $0$.
Note that $\syz^tM\in \res_R M$.
For all $i>0$ we have $\Tor_i^R(\syz^t M,R/(\xx))=\Tor_{i+t}^R(M,R/(\xx))=0$, which says that $\xx$ is a regular sequence on $\syz^t M$.
The $R/(\xx)$-module $\syz^t M/\xx\syz^t M$ has infinite projective dimension by \cite[Lemma 1.3.5]{BH}.
The case $t=0$ implies that $k$ belongs to $\res_{R/(\xx)}\syz^t M/\xx\syz^t M$.
It follows from \cite[Lemma 5.8]{stcm} that $\syz_R^dk\in\res_R\syz^t M\subseteq \res_R M$.

Finally, we consider the case where $R$ is not complete.
Lemma \ref{l53} gives an $R$-module $X\in \res_RM$ with $\pd_R X=\infty$ and $\nf(X)=\{\m\}$.
As $\widehat{R}$ is Burch and $\pd_{\widehat{R}}\widehat{X}=\pd_R X=\infty$, the above argument yields $\syz^t_{\widehat{R}} k\in\res_{\widehat{R}} \widehat{X}$.
Using Lemma \ref{l54}, we see $\syz^t k\in\res_R X$, and $\syz^tk\in\res_R M$.
\end{proof}

Non-Gorenstein Burch rings admit only trivial totally reflexive modules.
Recall that a local ring $R$ is called {\em G-regular} if every totally reflexive $R$-module is free.

\begin{thm}\label{33}
Let $R$ be a non-Gorenstein Burch local ring.
Then $R$ is G-regular. 
\end{thm}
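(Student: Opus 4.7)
The plan is to argue by contradiction. Suppose for contradiction that there exists a totally reflexive $R$-module $M$ that is not free. Because any nonfree totally reflexive module must have infinite projective dimension (otherwise Auslander--Buchsbaum combined with $\Ext^{>0}_R(M,R)=0$ would force $M$ to be free), $\pd_R M = \infty$. Set $t=\depth R$. Then Proposition \ref{r1} applies and yields $\syz^t k \in \res_R M$.

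Next, I would invoke the classical fact (due to Auslander--Bridger) that the subcategory of $\mod R$ consisting of totally reflexive modules is a resolving subcategory: it contains $R$, is closed under direct summands, extensions, and kernels of surjections between its objects. In particular, since $M$ is totally reflexive, every object of $\res_R M$ is totally reflexive. Consequently $\syz^t k$ is totally reflexive, so $k$ has finite Gorenstein dimension (at most $t$) over $R$.

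Finally, I would conclude using the standard criterion that a local ring $R$ is Gorenstein if and only if its residue field $k$ has finite G-dimension (equivalently, by the Auslander--Bridger formula combined with the characterization of Gorenstein rings). This forces $R$ to be Gorenstein, contradicting the assumption. Therefore no nonfree totally reflexive module can exist, and $R$ is G-regular.

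The main conceptual step is the first one, where Proposition \ref{r1} is used to pull a syzygy of the residue field into the resolving closure of a single module of infinite projective dimension; this is precisely the ``significant property'' of Burch rings advertised just before that proposition. The remainder of the argument is a soft categorical observation, so I expect no real technical obstacle beyond ensuring the right inputs from Gorenstein homological algebra are cited cleanly.
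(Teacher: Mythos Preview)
Your argument is correct and is essentially the same as the paper's proof: both assume a nonfree totally reflexive module exists, invoke Proposition \ref{r1} to place a syzygy of $k$ in its resolving closure, use that totally reflexive modules form a resolving subcategory, and conclude $k$ has finite G-dimension, forcing $R$ to be Gorenstein. The only cosmetic difference is that the paper first passes to the completion (citing \cite[Corollary 4.7]{greg}), but since Proposition \ref{r1} is already stated for arbitrary (not necessarily complete) Burch local rings, your version without that reduction is fine.
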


\begin{proof}
By taking the completion and using \cite[Corollary 4.7]{greg}, we may assume that $R$ is complete.
Let $\mathcal{G}$ be the category of totally reflexive $R$-modules.
Then $\mathcal{G}$ is a resolving subcategory of $\mod R$ by \cite[(1.1.10) and (1.1.11)]{C}.
If $R$ is not G-regular, that is, there is a nonfree $R$-module $M$ in $\mathcal{G}$,
then Proposition \ref{r1} shows that $\mathcal{G}$ contains the $R$-module $\syz^d k$, where $d=\dim R$.
In other words, $\syz^d k$ is totally reflexive.
This especially says that the $R$-module $k$ has finite G-dimension, and $R$ is Gorenstein; see \cite[(1.4.9)]{C}.
This contradiction shows that $R$ is G-regular.
\end{proof}

\begin{rem}
The converse of Theorem \ref{33} does not necessarily hold.
In fact, the non-trivial fibre product $R=S\times_k T$ of non-Burch local rings $S,T$ is non-Burch.
However, thanks to \cite[Lemma 4.4]{NT}, the same argument of the proof of Theorem \ref{33} works, and hence $R$ is G-regular.
\end{rem}

As a corollary of Theorem \ref{33}, ``embedded deformations'' of Burch rings are never Burch.

\begin{cor}\label{r4}
Let $(R,\m)$ be a singular local ring.
Let $x\in\m^2$ be an $R$-regular element.
Then the local ring $R/(x)$ is not Burch.
\end{cor}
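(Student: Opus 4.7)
The plan is to argue by contradiction, combining a codepth computation with Proposition~\ref{r5} and Theorem~\ref{33}. Suppose, for contradiction, that $S:=R/(x)$ is Burch. First I would observe that since $x\in\m^2$, the natural map $\m/\m^2\to\m_S/\m_S^2$ is an isomorphism, so $\edim S=\edim R$; combined with $\depth S=\depth R-1$ (which holds because $x$ is $R$-regular) this gives $\codepth S=\codepth R+1$. Since $R$ is singular---equivalently a non-regular local ring---we have $\codepth R\ge 1$, and hence $\codepth S\ge 2$.

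Next I would split on whether $S$ is Gorenstein. If $S$ is Gorenstein, Proposition~\ref{r5} would force $S$ to be a hypersurface, that is $\codepth S\le 1$, contradicting $\codepth S\ge 2$. So $S$ must be non-Gorenstein, and Theorem~\ref{33} then forces $S$ to be G-regular: every totally reflexive $S$-module is free.

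The final step, and the technical heart of the proof, is to derive a contradiction from this G-regularity. The plan here is to exhibit a non-free totally reflexive $S$-module arising from the embedded-deformation structure $R\twoheadrightarrow S=R/(x)$. Because $x\in\m^2$ is a regular element, this surjection equips $S$ with Eisenbud cohomology operators $\chi_x\colon\Ext^n_S(-,-)\to\Ext^{n+2}_S(-,-)$, and the singularity of $R$ guarantees that these operators act non-trivially on the $\Ext$-cohomology of high syzygies of the residue field. A standard construction from complete-intersection-deformation theory then yields an $S$-module with eventually periodic minimal free resolution, which is automatically non-free and totally reflexive---contradicting the G-regularity of $S$. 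Producing this module cleanly, and verifying its total reflexivity, is where the main technical obstacle lies; everything before it is routine.
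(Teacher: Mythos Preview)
Your overall strategy coincides with the paper's: use the codepth identity $\codepth S=\codepth R+1$ together with Proposition~\ref{r5} to dispose of the Gorenstein case, and in the non-Gorenstein case contradict Theorem~\ref{33} by exhibiting a non-free totally reflexive $S$-module coming from the embedded deformation $R\twoheadrightarrow S=R/(x)$. The paper merely runs the two halves in the opposite order (it builds the module first, then argues Burch $\Rightarrow$ Gorenstein $\Rightarrow$ hypersurface $\Rightarrow$ $\codim R=0$), but the logic is identical.

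The one genuine gap is in your construction of that module. Your sketch via Eisenbud operators is vague, and the role you assign to the singularity of $R$ is backwards: the Eisenbud operator on the minimal $S$-resolution of $k$ becomes an isomorphism in high degrees (hence yields eventual $2$-periodicity) precisely when $\pd_Rk<\infty$, i.e.\ when $R$ is \emph{regular}; singularity of $R$ obstructs rather than enables that route. Moreover, ``eventually periodic'' does not by itself force total reflexivity over a non-Gorenstein ring. The paper's construction is far more direct and uses no hypothesis on $R$ beyond $x\in\m^2$: citing \cite[Proposition~4.6]{greg}, one writes down an endomorphism $\delta\colon R^n\to R^n$ with $\delta^2=x\cdot\id_{R^n}$ and $\im\delta\subseteq\m R^n$ (just express $x$ as a sum of products of elements of $\m$); reducing modulo $x$ yields a doubly infinite exact complex $\cdots\xrightarrow{\bar\delta}S^n\xrightarrow{\bar\delta}S^n\xrightarrow{\bar\delta}\cdots$ whose $S$-dual is again exact, so $\cok\bar\delta$ is non-free and totally reflexive over $S$. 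The singularity of $R$ plays no role in this step---it enters only in the codepth contradiction---so what you flag as the main technical obstacle is in fact a short explicit matrix-factorization argument.
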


\begin{proof}
The proof of \cite[Proposition 4.6]{greg} gives rise to an endomorphism $\delta:R^n\to R^n$ such that $\delta^2=x\cdot\id_{R^n}$ and $\im\delta\subseteq\m R^n$.
It is easy to see that $\delta$ is injective, and we have an exact sequence $0\to R^n\xrightarrow{\delta}R^n\to C\to0$ with $xC=0$.
This induces an exact sequence $\cdots\xrightarrow{\overline\delta}(R/(x))^n\xrightarrow{\overline\delta}(R/(x))^n\xrightarrow{\overline\delta}(R/(x))^n\xrightarrow{\overline\delta}\cdots$ of $R/(x)$-modules whose $R/(x)$-dual is exact as well.
Since $\im\overline\delta=C$, the $R/(x)$-module $C$ is totally reflexive.
As $\im\delta\subseteq\m R^n$, we see that $C$ is not $R/(x)$-free.
Hence $R/(x)$ is not G-regular.

Suppose that $R/(x)$ is Burch.
Then Theorem \ref{33} implies that $R/(x)$ is Gorenstein.
By Proposition \ref{r5}, the ring $R/(x)$ is a hypersurface.
We have
$$
1\ge\codepth R/(x)=\edim R/(x)-\depth R/(x)=\edim R-(\dim R-1)=\codim R+1,
$$
where the second equality follows from the assumption that $x$ is not in $\m^2$.
We get $\codim R=0$, which means that $R$ is regular, contrary to our assumption.
\end{proof}

Let $(R,\m)$ be a local ring.
We denote by $\speco R$ the punctured spectrum of $R$.
For a property $\P$, we say that {\em $\speco R$ satisfies $\P$} if $R_\p$ satisfies $\P$ for all $\p\in\speco R$.
We denote by $\cm(R)$ the subcategory of $\mod R$ consisting of maximal Cohen--Macaulay modules.
Also, $\D^\b(R)$ stands for the bounded derived category of $\mod R$, and $\D_\sg(R)$ the {\em singularity category} of $R$, that is, the Verdier quotient of $\D^\b(R)$ by perfect complexes.
Note that $\D^\b(R)$ and $\D_\sg(R)$ have the structure of a triangulated category.
A {\em thick} subcategory of a triangulated category is by definition a triangulated subcategory closed under direct summands.
The following theorem gives rise to classifications of several kinds of subcategories over Burch rings; recall that a Cohen--Macaulay local ring $R$ is said to have {\em finite Cohen--Macaulay representation type} if there exist only finitely many isomorphism classes of indecomposable maximal Cohen--Macaulay $R$-modules.
For the unexplained notations and terminologies appearing in the theorem, we refer to \cite[\S2]{NT}.

\begin{thm} \label{th54}
Let $(R,\m)$ be a singular Cohen--Macaulay Burch local ring.
\begin{enumerate}[\rm(1)]
\item
Suppose that $\speco R$ is either a hypersurface or has minimal multiplicity.
Then there is a commutative diagram of mutually inverse bijections:
$$
\xymatrix{
{\left\{
\begin{matrix}
\text{Resolving subcategories of}\\
\text{$\mod R$ contained in $\cm(R)$}
\end{matrix}
\right\}}
\ar@<.7mm>[r]^-\nf\ar@{=}[d] &
{\left\{
\begin{matrix}
\text{Specialization-closed}\\
\text{subsets of $\sing R$}
\end{matrix}
\right\}}
\ar@<.7mm>[l]^-{\nf^{-1}_\cm}\ar@<.7mm>[d]^-{\ipd^{-1}} \\
{\left\{
\begin{matrix}
\text{Thick subcategories of}\\
\text{$\cm(R)$ containing $R$}
\end{matrix}
\right\}}
\ar@<.7mm>[r]^-{\thick_{\mod R}} \ar@<.7mm>[d]^-{\thick_{\D_{\sg}(R)}}&
{\left\{
\begin{matrix}
\text{Thick subcategories of}\\
\text{$\mod R$ containing $R$}
\end{matrix}
\right\}}
\ar@<.7mm>[l]^-{\rest_{\cm(R)}}\ar@<.7mm>[u]^-\ipd \ar@<.7mm>[d]^-{\thick_{\D^{\b}(R)}}
\\
{\left\{
\begin{matrix}
\text{Thick subcategories of}\\
\text{$\D_{\sg}(R)$}
\end{matrix}
\right\}}
\ar@<.7mm>[r]^-{\pi^{-1}}  \ar@<.7mm>[u]^-{\rest_{\cm(R)}}&
{\left\{
\begin{matrix}
\text{Thick subcategories of}\\
\text{$\D^{\b}(R)$ containing $R$}
\end{matrix}
\right\}}
\ar@<.7mm>[l]^-{\pi}\ar@<.7mm>[u]^-{\rest_{\mod R}}
}
$$
\item
Assume that $R$ is excellent and admits a canonical module $\omega$.
Suppose that $\speco R$ has finite Cohen--Macaulay representation type.
Then there is a commutative diagram of mutually inverse bijections:
$$
\xymatrix{
{\left\{
\begin{matrix}
\text{Resolving subcategories}\\
\text{of $\mod R$ contained in}\\
\text{$\cm(R)$ and containing $\omega$}
\end{matrix}
\right\}}
\ar@<.7mm>[r]^-\nf\ar@{=}[d] &
{\left\{
\begin{matrix}
\text{Specialization-closed}\\
\text{subsets of $\sing R$}\\
\text{containing $\ng R$}
\end{matrix}
\right\}}
\ar@<.7mm>[l]^-{\nf^{-1}_\cm}\ar@<.7mm>[d]^-{\ipd^{-1}} \\
{\left\{
\begin{matrix}
\text{Thick subcategories of}\\
\text{$\cm(R)$ containing}\\
\text{$R$ and $\omega$}
\end{matrix}
\right\}}
\ar@<.7mm>[r]^-{\thick_{\mod R}} \ar@<.7mm>[d]^-{\thick_{\D_{\sg}(R)}}&
{\left\{
\begin{matrix}
\text{Thick subcategories of}\\
\text{$\mod R$ containing}\\
\text{$R$ and $\omega$}
\end{matrix}
\right\}}
\ar@<.7mm>[l]^-{\rest_{\cm(R)}}\ar@<.7mm>[u]^-\ipd \ar@<.7mm>[d]^-{\thick_{\D^{\b}(R)}}\\
{\left\{
\begin{matrix}
\text{Thick subcategories of}\\
\text{$\D_{\sg}(R)$ containing $\omega$}
\end{matrix}
\right\}}
\ar@<.7mm>[r]^-{\pi^{-1}}  \ar@<.7mm>[u]^-{\rest_{\cm(R)}}&
{\left\{
\begin{matrix}
\text{Thick subcategories of}\\
\text{$\D^{\b}(R)$ containing}\\
\text{$R$ and $\omega$}
\end{matrix}
\right\}}
\ar@<.7mm>[l]^-{\pi}\ar@<.7mm>[u]^-{\rest_{\mod R}}
}
$$
\end{enumerate}
\end{thm}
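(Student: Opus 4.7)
The plan is to isolate the essential new content of the theorem, namely the top horizontal bijection
\[
\nf \colon \{\text{resolving subcategories of } \mod R \text{ contained in } \cm(R)\} \;\longleftrightarrow\; \{\text{specialization-closed subsets of } \sing R\},
\]
and then invoke the general machinery developed in \cite{NT} to deduce the remaining vertical bijections in the diagram. The correspondences between thick subcategories of $\cm(R)$, $\mod R$, $\D^{\b}(R)$ and $\D_{\sg}(R)$ are formal and hold for any Cohen--Macaulay singular local ring (via $\thick$, $\rest$, $\ipd$, $\pi$, $\pi^{-1}$), so no Burch hypothesis is required for these reductions.

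I would first verify surjectivity of the top map. Well-definedness is routine, since $\nf(\X)$ is always specialization-closed in $\sing R$ for $\X \subseteq \cm(R)$. For surjectivity, given a specialization-closed $W \subseteq \sing R$, Lemma \ref{l53} furnishes for each $\p \in W$ an $R$-module with non-free locus equal to $\overline{\{\p\}}$; taking a sufficiently high syzygy keeps the non-free locus intact while placing the module in $\cm(R)$. Thus $\nf_\cm^{-1}(W)$ realizes $W$ as its non-free locus.

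The crux is injectivity: for any resolving subcategory $\X \subseteq \cm(R)$ and any $M \in \cm(R)$ with $\nf(M) \subseteq \nf(\X)$, I must show $M \in \X$. Following the scheme of \cite{NT}, this proceeds by induction on $\dim \nf(M)$, with the base case $\nf(M) \subseteq \{\m\}$ being pivotal. Here Proposition \ref{r1} places $\syz_R^t k \in \res_R N \subseteq \X$ for every $N \in \X$ of infinite projective dimension, so $\syz_R^t k \in \X$; the punctured-spectrum hypothesis then lets us recover $M$ from $\syz_R^t k$ by the globalization arguments in \cite{stcm}. The inductive step localizes at a generic point of $\nf(M)$, applies the local analogue of the base case at $R_\p$, and globalizes via descent.

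The main obstacle will be the local step at each $\p \in \speco R$: verifying that every maximal Cohen--Macaulay $R_\p$-module lies in the resolving closure of $\syz_{R_\p}^{\dim R_\p} k(\p)$. In the hypersurface case this uses the two-periodicity of syzygies arising from matrix factorizations, together with Proposition \ref{r1} applied to $R_\p$ (which is Burch by Example \ref{e310}). In the minimal multiplicity case, one reduces modulo a general linear system of parameters to an artinian ring with $\m^2 = 0$, where the first syzygy of every module is a direct sum of copies of the residue field, and lifts back using Proposition \ref{minmulti} together with Proposition \ref{r1}. For part (2), the inclusion of $\omega$ as an extra ``ground module'' and of $\ng R$ as the required minimum locus is the standard canonical-module enhancement from \cite{NT}; excellence and finite Cohen--Macaulay representation type supply, respectively, the descent across completion and the existence of suitable approximations by $\omega$, without interacting with the Burch-specific arguments above.
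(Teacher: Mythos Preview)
Your proposal is correct and follows essentially the same approach as the paper: the paper's proof simply says to run the argument of \cite[Theorem 4.5]{NT} verbatim, replacing the single appeal to \cite[Lemma 4.4]{NT} by Proposition \ref{r1}, and your outline is an accurate unpacking of what that entails. One minor remark: in the inductive (local) step at $\p\in\speco R$, the \cite{NT} argument handles the hypersurface and minimal-multiplicity cases directly via their known structure (periodicity, resp.\ $\m^2=0$ after reduction) rather than by re-invoking the Burch property of $R_\p$ through Proposition \ref{r1}; your detour through Burchness at $R_\p$ works too, but is not needed and introduces an avoidable infinite-residue-field hypothesis from Proposition \ref{minmulti}.
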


\begin{proof}
The proof of \cite[Theorem 4.5]{NT} uses \cite[Lemma 4.4]{NT}.
Replace this lemma with our Proposition \ref{r1}.
Then the same argument works, and the theorem follows.
\end{proof}

\begin{ex}
We have the following list of examples of non-Gorenstein Cohen--Macaulay local rings not having isolated singularities, where $\circ$ and $\times$ mean ``Yes'' and ``No'' respectively.
\\
\\
\begin{tabular}{c|c|c|c|c|c|c}
\hline
&  &  &  & \multicolumn{3}{c}{$\speco R$}\\
\cline{5-7}
\cite[Example]{crs} & $R$ & $\dim R$ & Burch & hypersurface & min. mult. & finite CM rep. type\\
\hline\hline
7.1 & $\displaystyle\frac{k[\![x,y,z]\!]}{(x^2,xz,yz)}$ & $1$ & $\circ$ & $\circ$ & $\circ$ & $\circ$ \\
\hline
7.2 & $\displaystyle\frac{k[\![x,y,z]\!]}{(x^2,xy,y^2)}$ & $1$ & $\circ$ & $\times$ & $\circ$ & $\times$ \\
\hline
7.3 & $\displaystyle\frac{k[\![x,y,z]\!]}{(xy,z^2,zw,w^3)}$ & $1$ & $\times$ & $\times$ & $\circ$ & $\times$ \\
\hline
7.4 & $\displaystyle\frac{k[\![x,y,z]\!]}{(x^2-yz,xy,y^2)}$ & $1$ & $\circ$ & $\circ$ & $\times$ & $\circ$ \\
\hline
7.5 & $\displaystyle\frac{k[\![x,y,z,w]\!]}{(xy,xz,yz)}$ & $2$ & $\circ$ & $\times$ & $\circ$ & $\circ$ \\
\hline
\end{tabular}
\\
\\
The assertions are shown in \cite[Examples 7.1--7.5]{crs}, except those on the Burch property.
As to the first, second, fourth and fifth rings $R$ are Burch since the quotient of a system of parameters is isomorphic to $k[x,y]/(x^2,xy,y^2)$, which is an artinian Burch ring by Example \ref{r2}.
As for the third ring $R$, note that $(x,y)$ is an exact pair of zerodivisors.
Hence it is not G-regular, and not Burch by Theorem \ref{33}.
\end{ex}

Now we discuss the vanishing of Tor modules over Burch rings.
The following result is a simple consequence of Lemmas \ref{l310} and \ref{301}.

\begin{prop} \label{l59}
Let $(R,\m,k)$ be a Burch ring of depth zero, and let $M,N$ be $R$-modules.
If $\Tor_l^R(M,N)=\Tor_{l+1}^R(M,N)=0$ for some $l\ge 3$, then either $M$ or $N$ is a free $R$-module.
\end{prop}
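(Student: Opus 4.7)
The plan is to reduce the statement to a vanishing of the form $\Tor_i^R(k,N)=0$ for some $i\ge 1$, and then invoke the Auslander--Buchsbaum formula, exploiting the key feature of Burch rings of depth zero that any nonfree module detects $k$ in a low syzygy (Proposition \ref{203}). Assume $R$ is not a field, since otherwise every module is free and the conclusion is immediate. Arguing by contradiction, assume that neither $M$ nor $N$ is free. By the symmetry of $\Tor$, there is no harm in starting from $M$, and my goal will be to force $N$ to be free.

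First, since $M$ is nonfree, Lemma \ref{301} produces an exact sequence
\[
0\to (\syz M)^n\to M'\to M^n\to 0
\]
with $\I_1(M')=\m$ (and $M'\in\res_R M$, though the last fact is not strictly needed here). Because $R$ is a depth-zero Burch ring that is not a field, Proposition \ref{203} applies to $M'$ and shows that $k$ is a direct summand of $\syz_R^2 M'$. The next step is to feed the hypothesis into the long exact sequence of $\Tor(-,N)$ attached to the above extension: from $\Tor_l^R(M,N)=\Tor_{l+1}^R(M,N)=0$ one has $\Tor_l^R(M^n,N)=0$ and $\Tor_l^R((\syz M)^n,N)\cong n\cdot\Tor_{l+1}^R(M,N)=0$, so the fragment
\[
\Tor_l^R((\syz M)^n,N)\to\Tor_l^R(M',N)\to\Tor_l^R(M^n,N)
\]
forces $\Tor_l^R(M',N)=0$.

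Since $k$ is a direct summand of $\syz_R^2 M'$, the module $\Tor_{l-2}^R(k,N)$ is a direct summand of $\Tor_{l-2}^R(\syz_R^2 M',N)\cong\Tor_l^R(M',N)=0$, where the dimension shift is legal because $l-2\ge 1$. Hence $\Tor_{l-2}^R(k,N)=0$, which forces $\pd_R N<\infty$; as $\depth R=0$, the Auslander--Buchsbaum formula then gives $\pd_R N=0$, i.e., $N$ is free, contradicting the standing assumption. The main obstacle I anticipate is being careful with the Tor long exact sequence so that \emph{both} vanishings $\Tor_l(M,N)=0$ and $\Tor_{l+1}(M,N)=0$ are used simultaneously to produce the single vanishing $\Tor_l(M',N)=0$; this is precisely what makes the ``two consecutive vanishings'' hypothesis indispensable in this argument, while the bound $l\ge 3$ is exactly what allows the final Auslander--Buchsbaum step to run after the shift by $\syz^2$.
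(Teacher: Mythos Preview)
Your argument is correct and follows essentially the same route as the paper: both use Lemma \ref{301} to manufacture a module $M'$ with $\I_1(M')=\m$, invoke Proposition \ref{203} to get $k$ as a summand of $\syz^2 M'$, and then use the two consecutive Tor vanishings together with a dimension shift to obtain $\Tor_{l-2}^R(k,N)=0$. The only cosmetic difference is that the paper first passes to the induced exact sequence $0\to(\syz^3 M)^n\to\syz^2 M'\oplus F\to(\syz^2 M)^n\to0$ before applying $\Tor_{l-2}(-,N)$, whereas you apply $\Tor_l(-,N)$ to the original sequence and then dimension-shift; these are equivalent, and your omission of the reduction to the complete case is harmless since Proposition \ref{203} already handles it internally.
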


\begin{proof}
We may assume that $R$ is complete.
Assume that $M$ is nonfree.
Since $\depth R=0$, the $R$-module $M$ has infinite projective dimension.
By Lemma \ref{301}, there is a short exact sequence $0\to (\syz M)^n \to X \to M^n \to 0$, where $X$ satisfies $\I_1(X)=\m$.
It induces an exact sequence $0\to (\syz^3 M)^n \to \syz^2 X\oplus F \to (\syz^2 M)^n \to 0$ with $F$ free.
We also have $\Tor_{l-2}(\syz^2 M,N)=\Tor_{l-2}(\syz^3 M,N)=0$, which implies that $\Tor_{l-2}(\syz^2 X,N)=0$.
Lemma \ref{l310} implies that $k$ is a direct summand of $\syz^2 X$, as $R$ is Burch.
We see that $\Tor_{l-2}(k,N)$ vanishes.
This shows that $N$ has finite projective dimension, and so it is $R$-free.
\end{proof}

We can prove the following by applying a similar argument as in the proof of \cite[Corollary 6.5]{NT}, where we use Proposition \ref{l59} instead of \cite[Corollary 6.2]{NT}.

\begin{cor}\label{1}
Let $(R,\m,k)$ be a Burch ring of depth $t$.
Let $M,N$ be $R$-modules.
Assume that there exists an integer $l\ge\max\{3,t+1\}$ such that $\Tor_i^R(M,N)=0$ for all $l+t\le i\le l+2t+1$.
Then either $M$ or $N$ has finite projective dimension.
\end{cor}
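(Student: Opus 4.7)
The plan is to reduce to the depth-zero case handled by Proposition~\ref{l59} by cutting down along the regular sequence defining the Burch structure. After passing to the completion, fix a maximal $R$-regular sequence $\xx=x_1,\dots,x_t$ such that $\bar R:=R/(\xx)$ is a Burch ring of depth zero. Assuming, toward a contradiction, that both $M$ and $N$ have infinite projective dimension, I would replace them by the $t$-th syzygies $M':=\syz^t_R M$ and $N':=\syz^t_R N$, which are nonzero.

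The first technical step is to verify that $\xx$ is both $M'$- and $N'$-regular. Since $\xx$ is $R$-regular of length $t$, the Koszul resolution forces $\Tor^R_i(\bar R,-)=0$ for $i>t$; hence $\Tor^R_i(\bar R,M')\cong\Tor^R_{i+t}(\bar R,M)=0$ for all $i\geq 1$, which is exactly the $M'$-regularity of $\xx$ (and similarly for $N'$). The syzygy-shift isomorphism $\Tor^R_i(M',N')\cong\Tor^R_{i+2t}(M,N)$ (valid for $i\geq 1$) then translates the hypothesis into the vanishing of $\Tor^R_i(M',N')$ on the range $[l-t,\,l+1]$, whose left endpoint is positive since $l\geq t+1$.

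Now I would descend to $\bar R$ one regular element at a time. For $x$ regular on $R$, $M_0$, $N_0$, the long exact sequence from multiplication by $x$ on $M_0$ yields short exact sequences
\[
0\to\Tor^R_i(M_0,N_0)/x\to\Tor^R_i(M_0/xM_0,N_0)\to(0:_{\Tor^R_{i-1}(M_0,N_0)}x)\to0,
\]
so vanishing of $\Tor^R_i(M_0,N_0)$ on $[a,b]$ implies vanishing of $\Tor^R_i(M_0/xM_0,N_0)$ on $[a+1,b]$. Because $x$ is $N_0$-regular, the change-of-rings spectral sequence collapses to give an isomorphism $\Tor^R_i(M_0/xM_0,N_0)\cong\Tor^{R/x}_i(M_0/xM_0,N_0/xN_0)$. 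Iterating this reduction $t$ times along $\xx$ produces $\Tor^{\bar R}_i(\bar M',\bar N')=0$ for $i\in[l,\,l+1]$. Since $l\geq 3$ and $\bar R$ is a depth-zero Burch ring, Proposition~\ref{l59} forces $\bar M'$ or $\bar N'$ to be $\bar R$-free. Because $\xx$ is both $R$- and $M'$-regular, this lifts to $M'$ being $R$-free (and analogously for $N'$), contradicting $\pd_R M=\infty$.

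The main obstacle is bookkeeping the range of vanishing Tor indices through the iterated reduction: losing one vanishing per cut is precisely the reason one needs $t+2$ consecutive vanishings over $R$ in order to land on the $2$ required by Proposition~\ref{l59}, which is exactly the width $l+2t+1-(l+t)+1$ in the hypothesis. The syzygy trick is what makes the full regular sequence $\xx$ regular on both modules, so that the reductions can be carried out at all.
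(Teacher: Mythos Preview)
Your argument is correct and follows essentially the same route the paper indicates (the paper does not write out the proof but defers to the argument of \cite[Corollary~6.5]{NT}, substituting Proposition~\ref{l59} for their depth-zero input): pass to the completion, replace $M,N$ by their $t$th syzygies so that the chosen regular sequence $\xx$ acts regularly on both, then iteratively cut down by $\xx$---losing one index of Tor vanishing per step---until Proposition~\ref{l59} applies over $\bar R$, and finally lift freeness via \cite[Lemma~1.3.5]{BH}. Your index bookkeeping ($[l+t,l+2t+1]\mapsto[l-t,l+1]\mapsto[l,l+1]$) is exactly right.
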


\begin{rem}
Using an analogous argument as in the proof of \cite[Corollary 6.6]{NT}, one can also prove a variant of Corollary \ref{1} regarding Ext modules.
\end{rem}

We state a remark on the ascent of Burchness along a flat local homomorphism.

\begin{rem}
Let $(R,\m)\to(S,\n)$ be a flat local homomorphism of local rings.
Even if the rings $R$ and $S/\m S$ are Burch, $S$ is not necessarily Burch.
In fact, consider the natural injection
$$
\phi:R=k[x,y]/(x^2,xy,y^2)\hookrightarrow k[x,y,t]/(x^2,xy,y^2,t^2)=S.
$$
Then $\phi$ is a flat local homomorphism.
The artinian local rings $R$ and $S/\m S=k[t]/(t^2)$ are Burch by Examples \ref{r2} and \ref{r3}(1).
The ring $S$ is not G-regular since $(t,t)$ is an exact pair of zerodivisors of $S$.
Theorem \ref{33} implies that $S$ is not Burch.
\end{rem}

\section*{Acknowledgments}
Most of this work was done during the visit of Toshinori Kobayashi to the University of Kansas in 2018--2019. 
He is grateful to the Department of Mathematics for their hospitality.
We thank Professors Rodney Sharp and Edmund Robertson for providing them with useful biographical data on Professor Burch. We also thank Craig Huneke and Shinya Kumashiro for helpful comments. 
Hailong Dao was partly supported by Simons Collaboration Grant FND0077558.
Toshinori Kobayashi was partly supported by JSPS Grant-in-Aid for JSPS Fellows 18J20660.
Ryo Takahashi was partly supported by JSPS Grant-in-Aid for Scientific Research 16K05098, 19K03443 and JSPS Fund for the Promotion of Joint International Research 16KK0099.

\end{document}